\title
{Compactness for holomorphic curves with switching Lagrangian
  boundary conditions}
\author{K.~Cieliebak, T.~Ekholm and J.~Latschev}
\date{12 March 2009}
\theoremstyle{plain}
\newtheorem{theorem}{Theorem}[section]
\newtheorem{thm}[theorem]{Theorem}
\newtheorem{corollary}[theorem]{Corollary}
\newtheorem{proposition}[theorem]{Proposition}
\newtheorem{prop}[theorem]{Proposition}
\newtheorem{lemma}[theorem]{Lemma}
\theoremstyle{remark}
\newtheorem*{remark}{Remark}
\newtheorem{example}[theorem]{Example}
\theoremstyle{definition}
\newcommand{\id}{{{\mathchoice {\rm 1\mskip-4mu l} {\rm 1\mskip-4mu l}
{\rm 1\mskip-4.5mu l} {\rm 1\mskip-5mu l}}}}
\newcommand{\p}{\partial}
\newcommand{\om}{\omega}
\newcommand{\eps}{\varepsilon}
\newcommand{\pHi}{\varphi}
\newcommand{\into}{\hookrightarrow}
\newcommand{\la}{\langle}
\newcommand{\ra}{\rangle}
\newcommand{\N}{{\mathbb{N}}}
\newcommand{\R}{{\mathbb{R}}}
\renewcommand{\H}{{\mathbb{H}}}
\newcommand{\C}{{\mathbb{C}}}
\newcommand{\ind}{{\rm ind}}
\newcommand{\st}{{\rm st}}
\renewcommand{\min}{{\rm min}}
\renewcommand{\max}{{\rm max}}
\newcommand{\inn}{{\rm int\,}}
\newcommand{\loc}{{\rm loc}}
\newcommand{\diam}{{\rm diam\,}}
\newcommand{\MM}{\mathcal{M}}
\newcommand{\CC}{\mathcal{C}}
\newcommand{\UU}{\mathcal{U}}
\newcommand{\NN}{\mathcal{N}}
\newcommand{\TT}{\mathcal{T}}
\renewcommand{\Im}{{\mathfrak I \mathfrak m }}
\newcommand{\comment}[1]{}
\newcommand{\x}{\times}
\newcommand{\pa}{\partial}
\begin{document}
\maketitle
\abstract{
We prove a compactness result for holomorphic curves
with boundary on an immersed Lagrangian submanifold
with clean self-intersection. As a
consequence, we show that the number of intersections of such
holomorphic curves with the self-intersection locus is uniformly
bounded in terms of the Hofer energy.
}
%\tableofcontents

%%%%%%%%%%%%%%%%%%%%%%%%%%%%%%%%%%%%%%%%%%%%%%%%%%%%%%%%%%%%%%%%%%%%%
\section{Introduction}\label{sec:intro}
%%%%%%%%%%%%%%%%%%%%%%%%%%%%%%%%%%%%%%%%%%%%%%%%%%%%%%%%%%%%%%%%%%%%%

In this paper we prove a compactness result for holomorphic curves
with boundary on an immersed Lagrangian submanifold
with clean self-intersection along a compact submanifold $K$. As a
consequence, we show that the number of intersections of such
holomorphic curves with $K$ is uniformly bounded in terms of the Hofer
energy. This finiteness result is an essential ingredient in
the proof in~\cite{CELN} of the isomorphism of degree 0 Legendrian contact
homology of the unit conormal bundle of a knot $K\subset\R^3$ with the
cord algebra defined in~\cite{Ng:08}.

Consider a symplectic manifold $(X,\om)$ and an immersed Lagrangian
submanifold $L\subset X$ with clean self-intersection along a
compact submanifold $K$. Let $J$ be an $\om$-compatible almost complex
structure on $X$. We assume that near $K$ the structure $J$ is
integrable and $L$ is real analytic.
Let $(S,j)$ be a connected Riemmann surface with boundary $\p S$. A
{\em holomorphic curve} $f:(S,\p S,j)\to (X,L,J)$ is a continuous map
$f:S\to X$ which maps $\p S$ to $L$ and is $(j,J)$-holomorphic in the
interior. We allow $(X,L,\om,J)$ to be noncompact with cylindrical
ends as in~\cite{EGH}, and $S$ to have punctures in the interior as
well as on the boundary (see Section~\ref{sec:global} for the precise
setup). However, we do not treat intersections of $f|_{\p S}$ with $K$
-- which we call {\em switches} -- as boundary punctures. In
particular, we do not impose any constraints on the number and types
of switches.

Our first result states that the compactness result in symplectic
field theory (\cite{BEHWZ,CM-comp}) carries over to this setting.
See Section~\ref{sec:compact} for the precise statement.

\begin{thm}\label{thm:comp}
Under suitable hypotheses on $(X,L,\om,J)$ each sequence of
holomorphic curves $f_n:(S_n,\p S_n,j_n)\to (X,L,J)$ of fixed signature
and uniformly bounded energy has a subsequence converging in the sense
of~\cite{BEHWZ} to a stable holomorphic curve.
\end{thm}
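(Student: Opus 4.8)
The plan is to deduce this from the symplectic field theory compactness theorem of \cite{BEHWZ,CM-comp} together with a local study of holomorphic curves near the self-intersection locus $K$. The one genuinely new ingredient is a uniform bound on the number of switches of $f_n$ in terms of the energy and the signature; granting this, the rest is essentially bookkeeping on top of \cite{BEHWZ,CM-comp}.

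First I would dispose of everything away from $K$. Fix a small neighbourhood $U$ of $K$. On the complement of $f_n^{-1}(U)$ the boundary condition is an honest embedded Lagrangian (with cylindrical ends), so the a priori $C^1_{\loc}$ estimates in the absence of energy concentration, the bubbling-off analysis, and the neck-stretching / breaking bookkeeping of \cite{BEHWZ,CM-comp} apply without change. Passing to a subsequence, this gives convergence of the $f_n$ on $S_n\setminus f_n^{-1}(U)$ to a (nodal, possibly broken) holomorphic building, together with the usual finite collection of bubbling points, nodes, and breaking data. What remains is to understand the behaviour of $f_n$ inside $U$, i.e.\ near the switches, and to feed this back into the convergence.

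Near $K$ I would use that there $J$ is integrable and $L$ real analytic. Around a point $p\in K$ choose holomorphic coordinates in which the two local branches $L_1,L_2$ of $L$ through $p$ are the fixed loci of anti-holomorphic involutions $\sigma_1,\sigma_2$, so that $K=\Fix(\sigma_1)\cap\Fix(\sigma_2)$. Schwarz reflection extends a boundary arc on $L_i$ (via $\sigma_i$) to a genuinely holomorphic piece; at a switch the two reflections differ, and one finds that $f$ has an asymptotic expansion $f(z)=p+cz^{\beta}+o(|z|^{\beta})$ with $c\neq 0$ and $\beta\in\Q_{>0}$ determined by the Kähler angles of the clean intersection (with $\beta$ bounded away from $0$ and $\infty$ since $K$ is compact), unless $f$ is locally constant. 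From this I would extract two facts: (a) by unique continuation for the reflected curve, on each boundary component the switch set is either finite or all of $\p S_n$, and in the latter case that component is mapped entirely into $K$; boundary components of the second kind form a lower-dimensional phenomenon that I would treat separately, and ultimately exclude or absorb into the stable limit using that the signature is fixed; and (b) a monotonicity-type lower bound: the $\om$-energy that a holomorphic curve with boundary on $L$ must carry within the $\eps$-ball about a switch, for $\eps$ below a fixed threshold, is bounded below by a positive constant depending only on $\eps$.

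The heart of the proof, and the step I expect to be hardest, is the uniform bound on the number of switches. By (b), for fixed $\eps$ only finitely many switches of each $f_n$ can be $\eps$-separated in $X$, so the only way the switch count could blow up is by concentration of switches at points of $S_n$. But such concentration can occur only at the finitely many bubbling, node and breaking points produced in the second step, since away from those the $f_n$ converge in $C^1_{\loc}$ to a limit whose switch set is finite by (a). At such a point I would rescale: the rescaled maps subconverge to a \emph{non-constant} holomorphic curve with boundary on $L$, or on a single branch $L_i$, near $K$ — non-constant by the usual choice of rescaling capturing a definite amount of energy — whose switch set accumulates at an interior or boundary point, contradicting (a) applied to this limit. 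Hence switches cannot concentrate, the number of switches of $f_n$ is bounded by a constant depending only on the energy and signature, and after passing to a further subsequence the number of switches, their cyclic order on each boundary component, and the branch assignment of the intervening arcs all stabilize. With this combinatorial type fixed, the convergence scheme of \cite{BEHWZ} carries over: the limit building inherits switching Lagrangian boundary conditions of the same type (some switches may collide at nodes in the limit), it is stable, and the uniform bound passes to the limit — this bound being precisely the finiteness of switches in terms of Hofer energy asserted in the introduction. The main difficulty is thus the interplay in the last step between the local structure near a switch and simultaneous bubbling and neck-stretching, together with making precise the reflection/fractional-expansion local model on which both (a) and (b) rest.
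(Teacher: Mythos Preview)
Your proposal reverses the paper's logic and, in doing so, introduces a genuine gap. In the paper, compactness (Theorem~\ref{thm:comp}) is proved \emph{without} any a priori bound on the number of switches; the finiteness of switches (Theorem~\ref{thm:fin}) is then deduced \emph{from} compactness, under the additional hypotheses that $(X,L,\omega=d\lambda)$ is exact with convex end and that the domain is a disk. Your plan to first bound switches and then feed this into \cite{BEHWZ} would, at best, prove compactness only under those extra hypotheses.

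The gap is in your rescaling step. You argue that if switches of $f_n$ concentrate at a point $p$ of the domain, one rescales and obtains a nonconstant limit whose switch set accumulates, contradicting (a). But switch concentration does \emph{not} imply energy concentration: nothing prevents infinitely many switches of $f_n$ from clustering near a boundary point at which $f_n\to f$ in $C^0$ with $f(p)\in K$ and no energy bubbling off. In that case every rescaled limit is the constant map to $f(p)\in K$, and (a) gives no contradiction. Your monotonicity statement (b) does not help here, since all the clustering switches land in a single $\eps$-ball and monotonicity yields only one quantum of energy, not one per switch.

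The paper's actual mechanism for compactness avoids this entirely. Condition (K) provides, near $K$, holomorphic coordinates in which the two branches of $L$ are $\R^n$ and $\R^k\times i\R^{n-k}$; the transverse components of $f_n$ then satisfy (F1--3), and squaring them gives honest holomorphic functions with real boundary values. Montel's theorem for the squares yields $C^0_\loc$ convergence of the $f_n$ themselves (Lemma~\ref{lem:3}), and this is combined with a modified gradient bound~\eqref{eq:bounds2} on the projection $\tau\circ f_n$ (rather than on $f_n$) to run the bubbling analysis. No control on the number or location of switches is needed at any stage. The finiteness of switches is obtained afterwards, and the case you cannot handle --- a limit component mapping constantly into $K$ --- is dealt with not by rescaling but by a winding-number argument (Lemmas~\ref{lem:wind} and~\ref{lem:4}): the winding of $f_n$ along a fixed arc $\Gamma$ bounding the concentration region must simultaneously diverge (by the count of zeroes) and converge (by $C^\infty$ convergence on $\Gamma$), and exactness is used to rule out nonconstant components whose boundary lies entirely in $K$.
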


As a consequence, we obtain the following finiteness result for the
number of switches.

\begin{thm}\label{thm:fin}
 In the situation of Theorem~\ref{thm:comp}, suppose in addition
 that\linebreak
$(X,L,\omega=d\lambda)$
is exact with convex end. Then for each
 $s\in \N$ and $C>0$ there exists a constant $\kappa(s,C)\in\N$ such
 that every holomorphic {\em disk} $f:(\dot D,\p\dot D,j)\to (X,L,J)$
 with at most $s$ boundary punctures and energy $\leq C$ has at most
 $\kappa(s,C)$ switches.
\end{thm}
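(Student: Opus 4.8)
The plan is to argue by contradiction using the compactness Theorem~\ref{thm:comp}. Suppose that for some $s\in\N$ and $C>0$ there is a sequence of holomorphic disks $f_n\colon(\dot D,\p\dot D,j_n)\to(X,L,J)$ with at most $s$ boundary punctures and energy at most $C$, but such that the number of switches $N_n$ of $f_n$ tends to infinity. A disk with at most $s$ boundary punctures has only finitely many topological types, and the energy bound together with the convexity of the end forces the asymptotic Reeb chords at the punctures to have action at most $C$; since there are only finitely many Reeb chords of action at most $C$, only finitely many signatures occur among the $f_n$. Passing to a subsequence we may assume all $f_n$ have the same signature, so Theorem~\ref{thm:comp} yields (after a further subsequence) convergence $f_n\to\mathbf{F}$ in the sense of~\cite{BEHWZ}, where $\mathbf{F}$ is a stable holomorphic curve.

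Next I would bound the switches of the limit. The building $\mathbf{F}$ has finitely many components $v_1,\dots,v_N$, each with domain a compact Riemann surface with finitely many punctures and finitely many boundary circles. Near $K$ the integrability of $J$ and real analyticity of $L$ allow us to choose local holomorphic coordinates in which $K$ is the common zero set of finitely many real-analytic functions $\rho_1,\dots,\rho_k$ on a branch of $L$; by boundary regularity for holomorphic curves with totally real boundary on a real-analytic Lagrangian, each $v_i$ is real-analytic up to the boundary, so the switch locus of $v_i$ is the zero set of the real-analytic map $(\rho_1,\dots,\rho_k)\circ v_i|_{\p}$. Away from the excluded case that a whole boundary arc of $v_i$ maps into $K$, this set is discrete, hence finite, and each switch $z_0$ of $v_i$ has a well-defined finite order $m(z_0)$: the smallest $m$ for which some $\rho_l\circ v_i|_{\p}$ has nonvanishing $m$-th derivative at $z_0$. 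Put $M:=\sum_i\sum_{z_0}m(z_0)<\infty$.

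The heart of the matter is to prove $N_n\le M$ for $n$ large, contradicting $N_n\to\infty$. Two ingredients are needed. First, for $n$ large every switch of $f_n$ lies in an arbitrarily small neighborhood of some switch of some $v_i$. To see this, decompose $\dot D$ into thick parts, on which $f_n$ converges in $C^\infty$ up to the boundary to the $v_i$, so that switches there converge to switches of the $v_i$; and thin parts, which are either necks whose image shrinks to a node of $\mathbf{F}$ --- a point which, if it lies on $K$, is a switch of the adjacent components and otherwise has a $K$-free neighborhood eventually containing the neck --- or breaking cylinders over Reeb chords in the convex end, which are disjoint from the compact set $K$ and so contain no switches. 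Second, near a switch $z_0$ of $v_i$ of order $m$, with $l$ the index realizing this order, the real-analytic functions $h_n:=\rho_l\circ f_n|_{\p}$ converge in $C^m$ to $h:=\rho_l\circ v_i|_{\p}$, which has a zero of order exactly $m$ at $z_0$; hence $h_n^{(m)}$ is nonvanishing near $z_0$ for $n$ large, and $m-1$ applications of Rolle's theorem show $h_n$ has at most $m$ zeros near $z_0$, so $f_n$ has at most $m$ switches there. Summing over all switches of all components gives $N_n\le M$.

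I expect the first of these two ingredients to be the main obstacle: one must show that the total number of switches is upper semicontinuous along a sequence converging in the sense of~\cite{BEHWZ}, i.e.\ that no switches are ``lost'' inside the necks of the limiting building. This genuinely uses the fine structure of the convergence (the thick--thin decomposition, the behavior on breaking cylinders, and the compactness of $K$), not merely $C^\infty_{\loc}$ convergence in the interior. One subtle point here is a constant ghost component of $\mathbf{F}$ whose value lies on $K$: it carries no switch of finite order, yet switches of $f_n$ may accumulate near it, and these must be charged to the switches of the nonconstant components sharing the corresponding node. Excluding, or otherwise accounting for, nonconstant components with an entire boundary arc in $K$ --- for which the order $m(z_0)$ is not defined --- is another point requiring care, and is where the real analyticity hypothesis near $K$ is used decisively.
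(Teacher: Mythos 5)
Your global skeleton (contradiction, compactness, localizing the switches of $f_n$ near the limit's switch locus, then a local count) matches the paper's, and you correctly identify the two delicate configurations: components whose whole boundary lies in $K$, and constant components on $K$ at which switches of $f_n$ accumulate. The paper kills the first exactly as you suggest, via exactness (a nonconstant component needs a positive puncture, but unique continuation from Lemma~\ref{lem:4} would force its boundary entirely into $K$). However, your local counting mechanism --- Rolle's theorem applied to $h_n=\rho_l\circ f_n|_{\p}$ converging in $C^m$ --- does not work, for three reasons. First, $f_n|_{\p \dot D}$ is not smooth at its own switches: in the coordinates of condition (K) the transverse components behave like $z^{j+w}$ with $w\in\frac12\N$, so the boundary trace is only H\"older there (and it jumps between branches of $L$, so a single real-analytic defining function $\rho_l$ does not even make sense across a switch). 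Second, the convergence $f_n\to f$ established by the compactness theorem is only $C^0$ on neighbourhoods of $f^{-1}(K)\cap\p S$ --- precisely the region where you need to count zeros --- and $C^\infty$ only away from it, so the hypothesis ``$h_n\to h$ in $C^m$ near $z_0$'' is unavailable. Third, on a constant-on-$K$ component every candidate $h$ vanishes identically, there is no finite order $m(z_0)$, and your proposed ``charging to adjacent components'' is not carried out and is not obviously possible.

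The paper's substitute for all of this is the elementary local theory of Section~\ref{sec:local} for functions that are continuous on $S$, holomorphic in the interior, and map $\p S$ to $\R\cup i\R$: Lemma~\ref{lem:wind} bounds the number of zeros (each boundary zero counted with weight at least $\tfrac14$, regardless of its order or of any smoothness at the zero) by the winding number $w(f,\Gamma)$ over a curve $\Gamma$ avoiding the zeros, and Lemma~\ref{lem:4} upgrades this to a Vitali-type statement. This only requires smooth convergence of $f_n$ on the curve $\Gamma$, which one can choose to avoid the switch locus, so it is compatible with the weak convergence actually available. In particular, for a neighbourhood $S_i$ of a constant-on-$K$ piece one gets $w(f_n,\p S_i\setminus\p S)\ge n/(4r)\to\infty$ while smooth convergence on that curve forces $w(f_n,\cdot)\to w(f,\cdot)<\infty$ --- a contradiction with no need to assign orders to individual switches. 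You would need to replace your Rolle argument by something of this argument-principle type for the proof to go through.
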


The case $s=1$ of this finiteness result is an essential ingredient in
the proof in~\cite{CELN} of the isomorphism of degree 0 Legendrian contact
homology of the unit conormal bundle of a knot $K\subset\R^3$ with the
cord algebra defined in \cite{Ng:08}. This isomorphism is constructed
by counting 1-punctured holomorphic disks in $T^*\R^3$ with boundary on
the immersed Lagrangian submanifold $L=NK\cup\R^3$, where the conormal
bundle $NK\subset T^*\R^3$ of $K$ and the zero section $\R^3$
intersect cleanly along the knot $K$.

Holomorphic disks with boundary on cleanly intersecting Lagrangian
submanifolds are also studied in~\cite{AS}. 

\medskip

To put Theorem~\ref{thm:fin} into context, recall that in general
energy bounds are not enough to provide bounds on the topology of
holomorphic curves. Indeed, double branched covers of $\C P^1$ exist
for all genera, and by choosing the branch points to lie on the
equator and cutting the domain along preimages of suitable segments
connecting adjacent branch points, one obtains existence of
holomorphic curves of genus zero and arbitrarily many boundary
components, but of fixed energy.

Often, one can use index arguments to show that such phenomena
disappear after suitable perturbation.
%However, this does not always work.
Indeed, for the Fredholm theory of holomorphic curves
$f\colon(S,\pa S)\to (X,L)$ as above, it is convenient to puncture the
source $S$ at points in $\pa S$ that map to the clean intersection and
call such punctures {\em Lagrangian intersection punctures}. It turns
out that to each such puncture one can associate a winding number
$w\in \frac 1 2 \N$, and that the contribution of a Lagrangian
intersection puncture in a clean intersection of codimension $d$ to
the Fredholm index is $1-wd$ (see the appendix for more
details). Consequently, this contribution is negative provided $d\geq
3$, and equal to $0$ when $d=2$ and $w=\frac12$. It follows that for clean
intersections of codimension at least three one can control the number
of switches using transversality arguments. However, for codimension
two -- which is the most interesting case from the point of view of
smooth embedding theory~\cite{CELN} --
%in view of conormal lifts of codimension two submanifolds
no such argument is available. Still, the result of this
paper provides a bound on the number of switches which is independent
of codimension.

Similar remarks apply to the number of boundary circles $r$ and the
genus $g$ of $S$: If $\dim(L)=n$ satisfies $n>3$ then the number of
boundary circles and the genus can be bounded using transversality
arguments; this again follows from the dimension formula for the
corresponding moduli spaces (see the appendix). However, if $n=3$ the
dimension is independent of $g$ and $r$ and no such argument is
available. Indeed, the contribution to the Gromov-Witten invariant of
a Calabi-Yau 3-fold of multiple covers of degree $d$ and genus $g$ of
a fixed rational curve has been computed in~\cite{FP}; it is
nontrivial for any fixed $d\geq 2$ and arbitrarily high genus $g$, so
there is no bound of the genus in terms of the degree. It would be
interesting to have similar formulae for multiple covers of genus
zero and many boundary components of a fixed (punctured) disk.
\medskip

Our method of proof uses the integrability of $J$ near $K$ in an
essential way. It would be interesting to understand to what extend 
the conclusion of Theorem 1.1 remains true for more general almost
complex structures.

%%%%%%%%%%%%%%%%%%%%%%%%%%%%%%%%%%%%%%%%%%%%%%%%%%%%%%%%%%%%%%%%%%%%%
\section{Local theory}\label{sec:local}
%%%%%%%%%%%%%%%%%%%%%%%%%%%%%%%%%%%%%%%%%%%%%%%%%%%%%%%%%%%%%%%%%%%%%

Let $(S,j)$ be a connected Riemmann surface with boundary $\p S$,
possibly noncompact. We will consider functions $f:S \to \C$
satisfying the following conditions:
\begin{enumerate}[({F}1)]
\item $f$ is continuous on $S$;
\item $f$ is holomorphic on $\inn S := S\setminus\p S$;
\item $f$ maps $\p S$ to $\R \cup i\R$.
\end{enumerate}

We start with some elementary observations.
\begin{lemma}\label{lem:1}
A function $f$ satisfying (F1-3) is holomorphic on $\inn S \cup
(\p S \setminus f^{-1}(0))$.
\end{lemma}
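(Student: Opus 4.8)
The plan is to work locally near an arbitrary point $z_0 \in \p S \setminus f^{-1}(0)$ and show $f$ is holomorphic there; combined with (F2) this gives the claim. Choose a holomorphic coordinate identifying a neighborhood of $z_0$ in $S$ with the closed upper half-disk $\{|z| < \eps, \ \Im z \geq 0\}$, with $z_0 \mapsto 0$ and $\p S$ mapping to the real segment. By (F3), $f$ sends this real segment into $\R \cup i\R$. Since $f(z_0) \neq 0$ and $f$ is continuous, after shrinking $\eps$ we may assume $f$ is nowhere zero on the real segment, hence by connectedness of the segment $f$ maps it entirely into $\R \setminus \{0\}$ or entirely into $i\R \setminus \{0\}$.

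In the first case, $f$ maps the real boundary into the real axis, and I would invoke the Schwarz reflection principle: define $\wt f$ on the full disk $\{|z| < \eps\}$ by $\wt f(z) = f(z)$ for $\Im z \geq 0$ and $\wt f(z) = \ol{f(\bar z)}$ for $\Im z \leq 0$. The two definitions agree on the real segment (where $f$ is real-valued), $\wt f$ is continuous on the disk and holomorphic off the real axis, so by Morera's theorem (or the standard reflection statement) $\wt f$ is holomorphic on the whole disk; in particular $f$ is holomorphic at $z_0$. In the second case, $f$ maps the real boundary into $i\R$, so $g := -if$ maps it into $\R$; applying the same reflection argument to $g$ shows $g$, hence $f$, is holomorphic at $z_0$. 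This handles every boundary point outside $f^{-1}(0)$, and together with (F2) proves the lemma.

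I do not anticipate a serious obstacle here: the only point requiring care is the reduction showing that near a nonzero boundary point $f$ takes values in a \emph{single} one of the two lines $\R$ or $i\R$ (rather than jumping between them), which is exactly where the hypothesis $z_0 \notin f^{-1}(0)$ and continuity are used — at a zero the two lines meet and the argument would break down. Everything else is the classical Schwarz reflection principle applied on half-disk charts.
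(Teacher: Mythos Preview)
Your proof is correct, but it takes a slightly different route from the paper. The paper argues globally: it squares to $g:=f^2$, which now maps all of $\p S$ into $\R$, applies Schwarz reflection once to conclude $g$ is holomorphic on the whole of $S$, and then recovers $f$ as a holomorphic branch of $\sqrt{g}$ away from the zeros. You instead work locally and use continuity plus the fact that $(\R\cup i\R)\setminus\{0\}$ is disconnected to pin down a single line near each nonzero boundary point, then apply reflection directly to $f$ (or $-if$). Both arguments rest on Schwarz reflection and are equally valid for this lemma. The advantage of the paper's squaring trick is that it yields $g=f^2$ holomorphic on \emph{all} of $S$, including at boundary zeros, and this global object is reused in the proofs of the subsequent Lemmas~\ref{lem:2}--\ref{lem:4}; your argument is self-contained but does not produce that byproduct.
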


\begin{proof}
The function $g:=f^2:S \to \C$ is continuous on $S$, holomorphic
on $\inn S$ and maps $\p S$ to $\R$, so by the Schwarz
reflection principle it is holomorphic on all of $S$. Since the
square root has holomorphic branches outside zero, the result for $f$
follows.
\end{proof}

\begin{lemma}\label{lem:2}
If $f:S \to \C$ satisfying (F1-3) is not identically zero, then it has
only finitely many zeroes in any compact subdomain $S' \subset S$.
\end{lemma}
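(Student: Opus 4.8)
The plan is to reduce the statement to the standard fact that a non-constant holomorphic function on a Riemann surface has isolated zeroes, using Lemma~\ref{lem:1} to handle the interior and the Schwarz reflection trick to handle the boundary. First I would observe that by Lemma~\ref{lem:1} the function $f$ is holomorphic on $\inn S\cup(\p S\setminus f^{-1}(0))$, so the only zeroes that could conceivably accumulate somewhere are boundary zeroes accumulating at a boundary point $z_0\in\p S$ with $f(z_0)=0$. Thus it suffices to show that the zero set of $f$ is discrete near each such $z_0$.

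Next, near such a boundary point $z_0$ I would pass to the squared function $g:=f^2$, which by the proof of Lemma~\ref{lem:1} extends holomorphically across $\p S$ (Schwarz reflection, since $g$ maps $\p S$ to $\R$). If $g\equiv 0$ on a neighborhood of $z_0$ then $f\equiv 0$ there, and since $S$ is connected and $f$ is holomorphic on the dense open set $\inn S\cup(\p S\setminus f^{-1}(0))$, the identity theorem would force $f\equiv 0$ on all of $S$, contrary to hypothesis. (One has to be slightly careful that $f\equiv 0$ on an open piece of $S$ forces $f\equiv 0$ everywhere; this follows because $f$ is continuous on $S$, holomorphic on $\inn S$, and $\inn S$ is connected, so $f$ vanishes on $\inn S$ and hence on $S=\overline{\inn S}$.) Therefore $g$ is a not-identically-zero holomorphic function on a neighborhood of $z_0$ in the (doubled) surface, so its zeroes are isolated; in particular $z_0$ is an isolated zero of $g$, hence an isolated zero of $f=\pm\sqrt g$.

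Combining these two observations: every zero of $f$ in $S$ is isolated — interior zeroes because $f$ is holomorphic and not identically zero there, boundary zeroes by the reflection argument above. A set in which every point is isolated can meet any compact set $S'\subset S$ in only finitely many points (an infinite subset of a compact set has an accumulation point, which would then be a non-isolated zero by continuity of $f$). This gives the claim.

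The only real subtlety — and the step I would be most careful about — is the passage from "$f$ vanishes on an open subset of $S$" to "$f\equiv 0$", since $f$ is only assumed continuous (not holomorphic) on $\p S$; but this is handled cleanly by noting $\inn S$ is connected and dense in $S$ and applying the ordinary identity theorem there, then invoking continuity of $f$. Everything else is routine: Schwarz reflection is already quoted in Lemma~\ref{lem:1}, and the discreteness of zeroes of a non-trivial holomorphic function on a domain is classical.
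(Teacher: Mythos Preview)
Your proof is correct and uses essentially the same idea as the paper: pass to $g=f^2$, which is genuinely holomorphic on all of $S$ by Schwarz reflection, and invoke the identity theorem. The paper compresses this into a single contrapositive sentence (a limit point of $g^{-1}(0)$ forces $g\equiv 0$), whereas you unpack the interior/boundary cases and the connectedness argument explicitly, but the mathematical content is identical.
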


\begin{proof}

If not, then $g=f^2$ is a holomorphic function for which $g^{-1}(0)$
has a limit point, forcing it to vanish identically.
\end{proof}

For $f:S \to \C$ satisfying (F1-3) and not identically zero, let
$\gamma$ be a path in $S$ which does not meet any zero of $f$. Define
the {\em winding number} of $f$ along $\gamma$ by
$$
   w(f,\gamma):=\frac{1}{2\pi}\int_\gamma f^*d\theta,
$$
where $d\theta$ denotes the angular form on $\C\setminus\{0\}$.

\begin{lemma}\label{lem:wind}
Suppose $f:S \to \C$ satisfies (F1-3) and is not identically zero. Let
$S'\subset S$ be a compact subset with piecewise smooth boundary $\p
S'=(S'\cap\p S)\cup\Gamma$, where $\Gamma$ is a union of disjoint arcs in $S$
not meeting any zero of $f$. Then
$$
   w(f,\Gamma) \geq \#(f^{-1}(0)\cap\inn S') +
   \frac{1}{4}\#(f^{-1}(0)\cap S'\cap\p S).
$$
\end{lemma}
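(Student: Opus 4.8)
The plan is to deduce the estimate from the classical argument principle applied to the square $g:=f^2$. As in the proof of Lemma~\ref{lem:1}, $g$ is continuous on $S$, holomorphic on $\inn S$, maps $\p S$ into $\R$, and hence (by Schwarz reflection) extends to a holomorphic function across $\p S$. Its zero set equals $f^{-1}(0)$; at an interior zero $p$ one has $\mathrm{ord}_p(g)=2\,\mathrm{ord}_p(f)\ge 2$, while at a zero $q\in\p S$ the order $m_q:=\mathrm{ord}_q(g)$ is merely some integer $\ge 1$ ($f$ itself need not be holomorphic at $q$). Off the zero set $g^*d\theta=2\,f^*d\theta$, so $w(f,\gamma)=\tfrac12 w(g,\gamma)$ for every admissible path $\gamma$. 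Thus it suffices to prove
\[
   w(g,\Gamma)\ \ge\ 2\,\#\bigl(f^{-1}(0)\cap\inn S'\bigr)+\tfrac12\,\#\bigl(f^{-1}(0)\cap S'\cap\p S\bigr),
\]
where $\Gamma$ carries the orientation it has as a part of $\p S'$ (with $S'$ on the left).

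First I would replace $\p S'$ by a nearby contour that avoids all zeros of $g$. By Lemma~\ref{lem:2} the zeros of $g$ in the compact set $S'$ are finite in number; denote those lying on $S'\cap\p S$ by $q_1,\dots,q_N$, with orders $m_1,\dots,m_N\ge 1$ (each $q_j$ lies in the relative interior of a boundary arc of $\p S'$, since $\Gamma$ is disjoint from $f^{-1}(0)$). For small $\delta>0$, replace the sub-arc of $S'\cap\p S$ through each $q_j$ by a small semicircular arc $C_j^\delta$ of radius $\delta$ pushed into $\inn S$; this yields a piecewise smooth closed contour $\Gamma'_\delta$ bounding a compact subsurface $R_\delta\subset S$ in which the zeros of $g$ are exactly those of $g$ in $\inn S'$. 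Since $g$ is holomorphic across $\p S$, the argument principle applies to $R_\delta$ and gives $\frac1{2\pi}\oint_{\Gamma'_\delta}g^*d\theta=\sum_{p\in f^{-1}(0)\cap\inn S'}\mathrm{ord}_p(g)$.

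Next I would evaluate $\oint_{\Gamma'_\delta}g^*d\theta$ contribution by contribution. On the part of $\Gamma'_\delta$ contained in $\p S$, $g$ is real and nonzero, so $\arg g$ is locally constant and the contribution is $0$. The arcs $\Gamma$ contribute $\int_\Gamma g^*d\theta = 2\pi\,w(g,\Gamma)$. On each detour $C_j^\delta$, choosing a holomorphic boundary coordinate near $q_j$ with $q_j\mapsto 0$ and $\p S\mapsto\R$, one writes $g(z)=z^{m_j}h(z)$ with $h$ holomorphic and $h(0)$ a nonzero real; traversing $C_j^\delta$ in the orientation induced from $\p R_\delta$, $\arg z$ decreases by $\pi$ while $\arg h$ returns to its initial value, so $\int_{C_j^\delta}g^*d\theta = -m_j\pi$. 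Adding up, $2\pi\,w(g,\Gamma)-\pi\sum_{j=1}^N m_j = 2\pi\sum_{p\in f^{-1}(0)\cap\inn S'}\mathrm{ord}_p(g)$, hence
\[
   w(g,\Gamma)=\sum_{p\in f^{-1}(0)\cap\inn S'}\mathrm{ord}_p(g)+\tfrac12\sum_{j=1}^N m_j\ \ge\ 2\,\#\bigl(f^{-1}(0)\cap\inn S'\bigr)+\tfrac12 N ,
\]
and halving yields the lemma.

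The step needing the most care is the boundary analysis: one must use that it is $g=f^2$, not $f$, which extends holomorphically across $\p S$ (so that the argument principle is even available), and one must pin down the half-turn contribution $-m_j\pi$ of each detour, orientations included, since this is precisely what produces the coefficient $\tfrac14$ after passing from $g$ back to $f$. The remaining points --- finiteness of the zero set, the identity $w(f,\cdot)=\tfrac12 w(g,\cdot)$, and the classical argument principle --- are routine.
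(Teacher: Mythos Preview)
Your argument is correct and follows essentially the same route as the paper's proof: excise neighbourhoods of the zeros, use that $d\theta$ is closed and that the argument is locally constant along $\p S$, and read off the local contributions. The only cosmetic difference is that you pass to $g=f^2$ from the outset and apply the argument principle to $g$ on $R_\delta$ (removing only the boundary zeros explicitly), whereas the paper works directly with $f$, excises small disks around \emph{all} zeros (interior and boundary), and invokes Stokes' theorem for $f^*d\theta$ on the zero-free region $S''$; the boundary contribution $k/4$ in the paper and your $-m_j\pi$ detour contribution are the same computation in two guises.
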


\begin{proof}
Around each zero $p\in\inn S'$ pick a small disk $D_p\subset\inn S'$
containing no other zero. Then $w(f,\p D_p)=k\in\N$, where $(z-p)^k$
is the first nonvanishing term in the power series expansion of $f$ at
$p$.
Around each zero $q\in S'\cap\p S$ pick a small half-disk $D_q^+\subset
S'\setminus(\p S'\cap\inn S)$ containing no other zero and set
$\p^+D_q^+:=\p D_q\setminus\p S$. Then $w(f,\p^+D_q^+)=k/4\in\N/4$, where
$(z-q)^k$ is the first nonvanishing term in the power series expansion
of the holomorphic function $g=f^2$ at $q$. Now let $S''$ be the
region obtained by removing from $S'$ all disks resp.~half-disks
around zeroes of $f$. Since $d\theta$ is closed and the angle
$f^*\theta$ is constant along parts of $\p S$ containing no zeroes,
Stokes' theorem yields
$$
   0 = w(f,\p S'') = w(f,\Gamma) - \sum_p w(f,\p D_p) -
   \sum_q(f,\p^+D_q^+),
$$
from which the lemma follows.
%Consider the holomorphic function $g=f^2$ and note that $\int_\Gamma
%g^*d\theta=2\int_\Gamma f^*d\theta$.
\end{proof}

\begin{lemma}\label{lem:3}
Let $f_n:S \to \C$ be a sequence of functions satisfying (F1-3),
and assume that there is a constant $C>0$ such that for all $n \geq 1$
and all $z \in S$ we have
\begin{equation}\label{eq:bounds0}
|f_n(z)| \leq C
\end{equation}
Then there exists a subsequence $f_{n'}$ of the $f_n$,
and a function $f:S\to \C$ satisfying (F1-3) such that
\begin{enumerate}[(i)]
%\item $f$ is continuous on $D^+$ and holomorphic on $\inn D^+ \cup (\p
%  D^+ \setminus f^{-1}(0))$,
\item $f_{n'} \to f$ in $C^0_\loc$ on $S$, and
\item $f_{n'} \to f$ in $C^\infty_\loc$ on $\inn S \cup (\p S
  \setminus f^{-1}(0))$.
\end{enumerate}
\end{lemma}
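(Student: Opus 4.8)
The plan is to construct $f$ from two applications of Montel's theorem --- one to the squares $g_n:=f_n^2$ on all of $S$, one to the $f_n$ on $\inn S$ --- and then to handle the zeros of $g$ on $\p S$ (finitely many in any compact set), which are the only points where the two limits genuinely interact.

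By the proof of Lemma~\ref{lem:1}, the functions $g_n=f_n^2$ are holomorphic on all of $S$ (Schwarz reflection across $\p S$), bounded by $C^2$, and real on $\p S$. Extending them across $\p S$ by Schwarz reflection and applying Montel's theorem, I would pass to a subsequence with $g_{n'}\to g$ in $C^\infty_{loc}$ on $S$, where $g$ is holomorphic on $S$ and real on $\p S$. If $g\equiv 0$ the lemma is immediate (take $f\equiv 0$, using $|f_{n'}|=\sqrt{|g_{n'}|}\to 0$ and Cauchy estimates on $\inn S$), so assume $g\not\equiv 0$; then $g^{-1}(0)$ is discrete. Since the $f_n|_{\inn S}$ are holomorphic and bounded by $C$, a further application of Montel on $\inn S$ gives a sub-subsequence, still denoted $f_{n'}$, with $f_{n'}\to f$ in $C^\infty_{loc}$ on $\inn S$, $f$ holomorphic there and $f^2=g$ on $\inn S$; in particular $|f|^2=|g|$ on $\inn S$. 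This gives the $C^\infty_{loc}$ convergence on $\inn S$.

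Next I would extend $f$ across $\p S$. Near a point $q\in\p S$ with $g(q)\neq 0$, pick a half-disk neighbourhood $V$ on which $g$, and hence $g_{n'}$ for $n'$ large, is bounded away from $0$; then $f_{n'}$ is nonvanishing on $V$, hence holomorphic on $V$ up to $V\cap\p S$ by Lemma~\ref{lem:1}, and bounded by $C$. By Montel, every subsequence of $(f_{n'})$ has a sub-subsequence converging in $C^\infty_{loc}(V)$ to a holomorphic function; any such limit agrees with $f$ on the dense set $V\cap\inn S$, hence equals a single holomorphic function $\varphi$ on $V$, and since $C^\infty_{loc}(V)$ is metrizable we get $f_{n'}\to\varphi$ in $C^\infty_{loc}(V)$; set $f:=\varphi$ on $V\cap\p S$. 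This yields the $C^\infty_{loc}$ convergence on $\p S\setminus g^{-1}(0)$, and since $f^2=g$ is a nonzero real number on $V\cap\p S$, $f$ takes values in $\R\cup i\R$ there, because a complex number whose square is a positive (resp.\ negative) real number is real (resp.\ purely imaginary). At the remaining points $q\in\p S\cap g^{-1}(0)$ I would set $f(q):=0$; then $|f|^2=|g|$ holds on all of $S$, so $|f|$ is continuous and $f(z)\to 0$ as $z\to q$ in $S$, whence $f$ is continuous on $S$. Thus $f$ satisfies (F1-3) with $f^{-1}(0)=g^{-1}(0)$, and the $C^\infty_{loc}$ convergence on $\inn S\cup(\p S\setminus f^{-1}(0))$ is established.

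Finally I would prove the $C^0_{loc}$ convergence on $S$; it suffices to treat a small closed half-disk $V$ around a point $q\in\p S\cap g^{-1}(0)$. From the inequality $|\sqrt a-\sqrt b|\le\sqrt{|a-b|}$ for $a,b\ge 0$, together with $|f_{n'}|^2=|g_{n'}|$ and $|f|^2=|g|$, one gets $\bigl\|\,|f_{n'}|-|f|\,\bigr\|_{C^0(V)}\le\sqrt{\|g_{n'}-g\|_{C^0(V)}}\to 0$. Fix $\eps>0$: the set $\{z\in V:|f(z)|\ge\eps\}$ is a compact subset of $\inn S\cup(\p S\setminus g^{-1}(0))$, on which $f_{n'}\to f$ uniformly by the previous step, whereas on $\{z\in V:|f(z)|<\eps\}$ one has $|f_{n'}-f|\le|f_{n'}|+|f|<3\eps$ as soon as $\bigl\|\,|f_{n'}|-|f|\,\bigr\|_{C^0(V)}<\eps$; hence $\|f_{n'}-f\|_{C^0(V)}\le 3\eps$ for $n'$ large, and letting $\eps\to 0$ finishes the argument. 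The two Montel arguments are routine; the main obstacle is the behaviour at the zeros of $g$ on $\p S$ --- making sure the square-root limit $f$ extends continuously across them while still obeying (F3), and upgrading $|f_{n'}|\to|f|$ to genuine $C^0$ convergence there, where no $C^\infty$ bounds are available.
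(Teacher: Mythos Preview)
Your proof is correct and follows essentially the same route as the paper: square to get holomorphic $g_n$ on all of $S$, apply Montel to both $g_n$ and $f_n|_{\inn S}$, extend $f$ across $\p S$ via square roots, and handle the boundary zeros of $g$ separately. The only differences are cosmetic --- the paper extends $f$ to $\p S\setminus g^{-1}(0)$ by directly choosing the branch of $\sqrt{g}$ that matches $f$ in the interior (and then gets $C^\infty$-convergence there from that of $g_n$ together with smoothness of the square root away from $0$), whereas you run a third Montel/subsequence argument; and your $C^0$ argument near boundary zeros is more explicit than the paper's one-line appeal to ``continuity of the square root''.
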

\begin{proof}
Consider the associated sequence of holomorphic functions
$g_n:=f_n^2:S\to \C$. The assumptions imply that for all $z\in S$ we
have
$$
   |g_n(z)| \leq C^2.
$$
Hence by Montel's theorem, after passing to a subsequence, the $g_n$
converge in $C^\infty_\loc(S)$ to a limit function $g:S \to \C$
which is holomorphic and maps $\p S$ to $\R$. By the same argument,
after passing to a further subsequence, the $f_n$ converge in
$C^\infty_\loc(\inn S)$ to a holomorphic function $f:\inn S \to \C$
satisfying $f^2=g|_{\inn S}$.

At points $z \in \p S$ with $g(z) \neq 0$ we extend $f$
by taking the branch of $\sqrt{g}$ that agrees with $f$ at interior
points near $z$, and at points $z \in \p S$ with $g(z)=0$ we set
$f(z):=0$. The resulting function $f:S\to \C$ satisfies (F1-3).
% and $f^2=g$ holds throughout $D^+$.
In particular, Lemma~\ref{lem:1} applies to show that $f$ is
holomorphic on $\inn S\cup(\p S \setminus f^{-1}(0))$.

$C^0_\loc$-convergence of the $f_n$ to $f$ follows from the
$C^0_\loc$-convergence of the $g_n$ to $g$ and continuity of the
square root. It remains to show $C^\infty_\loc$-convergence $f_n\to f$
on compact subsets of $\inn S\cup(\p S \setminus f^{-1}(0))$. If
$f\equiv 0$ this holds trivially, so suppose the $f$ does not
vanish identically. Fix a compact subset $S'\subset \inn S \cup (\p S
\setminus f^{-1}(0))$. By Lemma \ref{lem:2}, $f$ has only finitely
many zeroes in $S'$. Pick a compact subset $S_0\subset S'\cap\inn S$
containing all the zeroes and set $S_1:=S'\setminus\inn S_0$. On $S_0$
the $C^\infty$-convergence $f_n\to f$ was shown above, and on $S_1$ it
follows from the $C^\infty_\loc$-convergence $g_n\to g$ and smoothness
of the square root away from zero.
\end{proof}

The following statement is a variant of a result known as Vitali's
theorem.
\begin{lemma}\label{lem:4}
Let $f_n:S \to \C$ be a sequence of functions satisfying the
assumptions of Lemma~\ref{lem:3}, and suppose there exists a compact
subset $A\subset S$ such that each $f_n$ has at least $n$
zeroes in $A$.
%, counted with multiplicity.
Then the limiting function $f$ vanishes identically.
\end{lemma}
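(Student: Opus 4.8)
The plan is to combine the compactness statement of Lemma~\ref{lem:3} with the winding-number estimate of Lemma~\ref{lem:wind}, arguing by contradiction. Suppose $f\not\equiv 0$. By Lemma~\ref{lem:3}, after passing to a subsequence we have $f_{n}\to f$ in $C^0_{\loc}$ on $S$ and in $C^\infty_{\loc}$ on $\inn S\cup(\p S\setminus f^{-1}(0))$. Since $f$ is not identically zero, Lemma~\ref{lem:2} gives that $f$ has only finitely many zeroes in the compact set $A$; choose a slightly larger compact subdomain $S'\subset S$ containing $A$ with piecewise smooth boundary $\p S'=(S'\cap\p S)\cup\Gamma$, where $\Gamma$ is a finite union of disjoint arcs chosen to avoid all (finitely many) zeroes of $f$ in $S'$. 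By shrinking the arcs slightly if necessary we may also assume $\Gamma$ lies in the region where $f_n\to f$ in $C^\infty_{\loc}$, so that in particular $f_n$ has no zeroes on $\Gamma$ for $n$ large.

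The key point is then a two-sided control on $w(f_n,\Gamma)$. On one hand, Lemma~\ref{lem:wind} applied to $f_n$ on $S'$ yields
$$
   w(f_n,\Gamma)\ \geq\ \#\bigl(f_n^{-1}(0)\cap\inn S'\bigr)+\tfrac14\#\bigl(f_n^{-1}(0)\cap S'\cap\p S\bigr)\ \geq\ \tfrac14\,\#\bigl(f_n^{-1}(0)\cap S'\bigr).
$$
Since all $n$ zeroes of $f_n$ lie in $A\subset S'$, the right-hand side is at least $n/4$, so $w(f_n,\Gamma)\to\infty$. On the other hand, $w(f_n,\Gamma)=\frac{1}{2\pi}\int_\Gamma f_n^*d\theta$ depends only on $f_n|_\Gamma$, and since $f_n\to f$ in $C^1$ on the compact arc-system $\Gamma$ (on which $f$ does not vanish), we get $f_n^*d\theta\to f^*d\theta$ uniformly on $\Gamma$, hence $w(f_n,\Gamma)\to w(f,\Gamma)$, a finite number. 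This contradiction forces $f\equiv 0$.

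The main obstacle is the bookkeeping needed to make the region $S'$ legitimate: one must choose $S'$ and its boundary arcs $\Gamma$ so that simultaneously (a) $A\subset\inn S'$, (b) $\Gamma$ misses the finitely many zeroes of the limit $f$, and (c) $\Gamma$ lies in $\inn S\cup(\p S\setminus f^{-1}(0))$, i.e.\ in the region of $C^\infty_{\loc}$-convergence, so that the winding numbers along $\Gamma$ converge. Near boundary zeroes of $f$ one excises small half-disks as in Lemma~\ref{lem:wind}; everything else is routine once one notes that finitely many zeroes can always be enclosed away from a chosen cutting system. A minor additional point is to confirm that for $n$ large the arcs $\Gamma$ indeed avoid the zeroes of $f_n$ (so that $w(f_n,\Gamma)$ is defined and Lemma~\ref{lem:wind} applies), which follows from uniform convergence $f_n\to f$ on $\Gamma$ together with $\inf_\Gamma|f|>0$.
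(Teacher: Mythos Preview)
Your proposal is correct and follows essentially the same approach as the paper: argue by contradiction, choose a compact subdomain $S'\supset A$ whose cutting arcs $\Gamma$ miss the zeroes of $f$, use Lemma~\ref{lem:wind} to bound $w(f_n,\Gamma)\geq n/4$, and contradict this with the convergence $w(f_n,\Gamma)\to w(f,\Gamma)<\infty$. The only cosmetic difference is that the paper arranges $\Gamma$ to avoid the (countably many) zeroes of all the $f_n$ by shrinking $S'$, whereas you instead use uniform convergence on $\Gamma$ together with $\inf_\Gamma|f|>0$ to conclude $f_n|_\Gamma$ is eventually zero-free; both devices serve the same purpose.
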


\begin{proof}
Pick a compact subset $S'\subset S$ with piecewise smooth boundary $\p
S'=(S'\cap\p S)\cup\Gamma$ such that $A\subset S'\setminus\Gamma$.
If $f$ has infinitely many zeroes in $S'$, then by Lemma~\ref{lem:2}
it vanishes. Otherwise, after passing to a subsequence, we may assume
that $f$ as well as each $f_n$ has only finitely many zeroes in
$S'$. After slightly shrinking $S'$ we may assume that $\Gamma$ avoids
the countably many zeroes of $f$ and the $f_n$. Since $A\subset
S'\setminus\Gamma$ and $f_n$ has at least $n$ zeroes in $A$,
Lemma~\ref{lem:wind} yields $w(f,\Gamma)\geq n/4$.
On the other hand, since $f_n|_\Gamma$ converges smoothly to
$f|_\Gamma$, we have
$$
   w(f_n,\Gamma) \stackrel{n \to\infty}\longrightarrow
   w(f,\Gamma)  < \infty,
$$
contradicting the previous estimate.
\end{proof}

%%%%%%%%%%%%%%%%%%%%%%%%%%%%%%%%%%%%%%%%%%%%%%%%%%%%%%%%%%%%%%%%%%%%%
\section{Global theory}\label{sec:global}
%%%%%%%%%%%%%%%%%%%%%%%%%%%%%%%%%%%%%%%%%%%%%%%%%%%%%%%%%%%%%%%%%%%%%

\subsection{Setup}\label{ss:setup}
%%%%%%%%%%%%%%%%%%%%%%%%%%%%%%%%%%%%%%%%%%%%%%%%%%%%%%%%%%%%%%%%%%%%%

For the global theory we consider the following setup.
\begin{enumerate}
\item[(X)] $(X,J)$ is an almost complex manifold with cylindrical
end $\R_+\times M$ adjusted to $(\om,\lambda)$ in the sense
of~\cite{BEHWZ}.
\end{enumerate}
This means that $X=\bar X\cup(\R_+\times M)$ with $\p\bar X=M$,
$(\om,\lambda)$ is a stable Hamitonian structure on $M$, $\om$ extends
to a symplectic form on $\bar X$, and $J$ is compatible with $\om$ on
$\bar X$ and with $(\om,\lambda)$ on $\R_+\times M$. We allow $X$ to
be noncompact but impose the following condition.
\begin{enumerate}
\item[(Y)] There exists a compact subset $\bar Y\subset\bar X$ such
that every $J$-holomorphic map $f:S\to X$ from a compact Riemann
surface with boundary satisfying $f(\p S)\subset Y:=\bar Y\cup(\R_+\times(\bar
Y\cap M))$ is entirely contained in $Y$.
\end{enumerate}
Note that condition (Y) is trivially satisfied (taking $\bar Y=\bar
X$) if $\bar X$ is compact.
Our assumption on the Lagrangian is the following.
\begin{enumerate}
\item[(L)] $L\subset Y \subset X$ is a properly
immersed Lagrangian submanifold with $L\cap(\R_+\times
M)=\R_+\times\Lambda$ for a compact submanifold $\Lambda\subset M$ satisfying
$\lambda|_\Lambda=\om|_\Lambda=0$, and such that $L$ has clean
self-intersection along a compact connected submanifold $K\subset\inn\bar Y$.
\end{enumerate}
Here {\em clean self-intersection} means that at each point $x\in K$
exactly two branches $L_0,L_1$ of $L$ meet and $T_xK=T_xL_0\cap
T_xL_1$. More precisely, $L$ is the image of a Lagrangian immersion
$f:\tilde L\to X$ with clean self-intersection along the submanifold
$\tilde K=f^{-1}(K)$. Then $f|_{\tilde K}:\tilde K\to K$ is a 2-1
covering and the two branches of $L$ near $x\in K$ are the images
under $f$ of neighbourhoods of the preimages $x_0,x_1$ of $x$. Note
that $L$ may be 2-sheeted near $K$, i.e.~the union of two embedded
submanifolds intersecting in $K$ (if $\tilde K$ is disconnected), or
1-sheeted (if $\tilde K$ is connected). We impose the following
condition on the almost complex structure near $K$.
%{\em $J$-orthgonality} means that for every $x\in K$
%the intersection $T_xL_0\cap J(T_xL_1)$ is $(n-k)$-dimensional, where
%$k=\dim K<\dim L$.
%By $\UU_\eps$ we denote the open $\eps$-neighbourhood of $K$
%in $X$ with respect to the distance induced by $(\om,J)$.

\begin{enumerate}
\item[(K)] There exists a neighbourhood $\UU$ of $K$ on which $J$
is integrable, a holomorphic embedding $\iota:K^\C\into X$ of a
complexification of $K$, and a holomorphic projection $\tau:\UU\to K^\C$
on a neighbourhood of $K$ such that $\tau\circ\iota=\id$. Moreover,
near every point $x\in K$ there exist holomorphic coordinates in
$\C^n=\R^k\oplus\R^{n-k}\oplus i\R^k\oplus i\R^{n-k}$
sending $x$ to $0$, $L_0$ to $\R^n$ and $L_1$ to $\R^k\oplus
i\R^{n-k}$.
\end{enumerate}

In particular, this implies that $L$ is real analytic near $K$ with
$J$-orthogonal self-intersection along $K$, i.e.~for every $x\in K$
the intersection $T_xL_0\cap J(T_xL_1)$ is
$(n-k)$-dimensional. However, condition (K) is more restrictive than
this. Indeed, not every pair of real analytic curves in $\C$
intersecting orthogonally at the origin can be mapped to the
coordinate axes by a local biholomorphism (e.g.~if one curve is the
$y$-axis, then the existence of such a biholomorphism imposes
infinitely many constraints on the Taylor coefficients of the other
curve as a graph over the $x$-axis).

Finally, we assume that the Reeb flow on $M$ satisfies the
following nondegeneracy condition:
\begin{enumerate}
\item[(R)] No closed Reeb orbit meets $\Lambda$, and all closed Reeb
  orbits and Reeb chords are non-degenerate.
\end{enumerate}
Here a {\em Reeb chord} is a Reeb orbit $\gamma:[0,T] \to M$ with
$\gamma(0),\gamma(T)\in \Lambda$. If there are no closed Reeb orbits
(e.g.~for conormal lifts of $K\subset\R^n$ with the flat metric)
these conditions can be arranged by a perturbation of $\Lambda$. In
the contact case $\om=d\lambda$
these conditions can be arranged by a perturbation of $\lambda$.

Our main case of interest is described in the following example.

\begin{example}[cotangent bundle]\label{ex:cot}
Here the symplectic manifold $X=T^*Q$ is the cotangent bundle of a
Riemannian manifold $Q$ with the Liouville 1-form $\lambda=p\,dq$ and
symplectic form $\om=d\lambda$. $M=S^*Q$ is the unit cotangent bundle
and $J$ is the almost complex structure on $T^*Q$ induced by the
Riemannian metric, deformed outside $S^*Q$ to make it cylindrical.
$K\subset Q$ is a compact submanifold and $L=Q\cup NK$, where
$Q$ is the zero section and $NK$ the conormal bundle, and
$\Lambda=NK\cap S^*Q$. Then $Q$ and $NK$ intersect cleanly along $K$.
%If we can make the metric flat near $K$ integrability of $J$ near
%$K$ follows and real analyticity of $L$ near $K$ can be achieved by
%making $K$ and the metric real analytic. (If this is not possible we
%use Lemma~\ref{lem:K2} below).
We assume that $Q=\bar Q\cup(\R_+\times\p\bar
Q)$ with compact $\bar Q$; then condition (Y) can be arranged (with
$Y=T^*\bar Q$) by making all level sets $\{r\}\times\p\bar Q$ in the
cylindrical end $\R_+\times\p\bar Q$ totally geodesic (then their
preimages in $T^*Q$ are Levi-flat and holomorphic curves cannot touch
them from inside). Condition (R) holds for a generic metric;
condition (K) can be arranged by Proposition~\ref{prop:K} below, or by
Remark~\ref{rem:K2} if $K$ admits a flat metric and has trivial normal
bundle (e.g.~for a 1-knot in $\R^3$).
%In the special case $Q=\R^n$ all conditions are satisfied if we take
%the flat metric on $\R^n$ and make $K$ real analytic (for $Y$ we can
%take the cotangent bundle of a ball containing $K$).
\end{example}

\subsection{Structure near $K$}\label{ss:K}
%%%%%%%%%%%%%%%%%%%%%%%%%%%%%%%%%%%%%%%%%%%%%%%%%%%%%%%%%%%%%%%%%%%%%

In this subsection we show that condition (K) can always be arranged
by a deformation of the compatible almost complex structure near $K$,
provided that $L$ is 2-sheeted near $K$.

\begin{proposition}\label{prop:K}
Let $L_0,L_1$ be Lagrangian submanifolds of a symplectic $2n$-manifold
$(X,\om)$ intersecting cleanly along a closed submanifold $K$ of
dimension $k$. Then there exists an $\om$-compatible integrable
complex structure $J$ on a neighbourhood $\UU$ of $K$ such that
condition (K) holds.
\end{proposition}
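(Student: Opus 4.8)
The plan is to build the integrable complex structure in two stages: first produce a good local normal form near a single point, and then globalize it by a Darboux-type argument along $K$, using the fact that the two branches are genuinely separate embedded submanifolds. The model to aim for is $\C^n = \R^k\oplus\R^{n-k}\oplus i\R^k\oplus i\R^{n-k}$ with $L_0 = \R^n$ and $L_1 = \R^k\oplus i\R^{n-k}$; note that $L_0\cap L_1 = \R^k$ is exactly the $K$-model, both are Lagrangian for the standard form, and they intersect cleanly. So the content is that $(X,\om,L_0,L_1,K)$ is locally symplectomorphic to this model in a way that is simultaneously compatible with an integrable $J$.

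First I would establish the local statement: near any $x\in K$ there are Darboux coordinates in which $\om$ is standard, $L_0 = \R^n$ and $L_1 = \R^k\oplus i\R^{n-k}$. Choose Darboux coordinates in which $L_0$ becomes $\R^n$ (Weinstein's Lagrangian neighborhood theorem). In these coordinates $L_1$ is another Lagrangian through $x$ meeting $\R^n$ cleanly along a $k$-dimensional subspace, which after a linear symplectic change we may take to be $\R^k\subset\R^n$. The tangent space $T_xL_1$ is then a Lagrangian subspace containing $\R^k$ and transverse to $\R^n$ in the remaining $n-k$ directions; a further linear symplectic transformation fixing $\R^n$ makes $T_xL_1 = \R^k\oplus i\R^{n-k}$. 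Now $L_1$ is a Lagrangian submanifold tangent to this linear model along $\R^k$; a relative Moser/Weinstein argument (interpolating between $L_1$ and its tangent model rel $\R^k$, using that both are Lagrangian and agree to first order along $K$) produces a symplectomorphism carrying $L_1$ to $\R^k\oplus i\R^{n-k}$ while fixing $\R^n$. Pulling back the standard complex structure of $\C^n$ then gives a local integrable $\om$-compatible $J$ realizing condition (K), together with the local pieces of the projection $\tau$ (projection to the complexification of $\R^k$) and the embedding $\iota$.

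The second and harder step is to patch these local structures into a single $J$ on a neighborhood $\UU$ of all of $K$, together with the global holomorphic embedding $\iota\colon K^\C\into X$ and holomorphic projection $\tau$. The idea is first to fix a complexification $K^\C$ abstractly and a real-analytic structure on $K$ (which $K$ carries, being a manifold), then to choose a tubular neighborhood of $K$ in $X$ and transport the fiberwise model above, using the fact that the normal bundle of $K$ in $X$ splits $\om$-orthogonally and $J_0$-complex-linearly into the pieces corresponding to $\R^{n-k}$, $i\R^k$, $i\R^{n-k}$ (the directions transverse to $K$ inside $L_0$, transverse to $L_0$ inside $L_1$... more precisely normal to $K$ in $L_0$, and the conormal-type directions). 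One builds $J$ first on this linear model over $K$ so that $K^\C$ embeds holomorphically (complexify the base directions) and $L_0$, $L_1$ become the obvious totally real submanifolds, then corrects it to be integrable and $\om$-compatible near $K$ by a partition-of-unity patching of the local solutions from step one, controlling the error because any two local models differ by a biholomorphism fixing the relevant real loci. I expect the main obstacle to be exactly this globalization: ensuring that the patched $J$ is simultaneously integrable (not just formally so), $\om$-compatible, and compatible with a single global $\tau$ and $\iota$; integrability is not an open condition, so the patching must be done at the level of the coordinate charts/transition maps rather than at the level of the tensor $J$, i.e.\ one should glue the local holomorphic coordinate systems into an honest complex-analytic structure on $\UU$ and then check $\om$ is of type $(1,1)$ and tameable there after a further deformation rel $K$.
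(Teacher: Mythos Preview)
Your local step is fine and close in spirit to what the paper does (the paper packages it as a single symplectic normal-form lemma: a neighbourhood of $K$ in $(X,\om,L_0,L_1)$ is symplectomorphic to a neighbourhood of $K$ in $(T^*L_0,\om_\st)$ with $L_0$ the zero section and $L_1$ the conormal bundle $NK$). The genuine gap is in your globalization. You correctly diagnose the problem---integrability is not preserved under convex combination, so partition-of-unity patching of the tensor $J$ is hopeless---but your fallback of ``glue the local holomorphic coordinate systems into an honest complex-analytic structure'' does not work either: the transition maps you have are symplectomorphisms preserving the two real loci, and there is no mechanism forcing them to be biholomorphic. You have not provided any reason why such a holomorphic atlas should exist, and this is exactly the heart of the matter.

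The paper sidesteps this entirely by never patching. After the global symplectic normal form above, it builds one global integrable $\om$-compatible $J$ on an abstract model and then transports it back by the symplectomorphism. The model is the total space of the complex vector bundle $E\to TK$ obtained by pulling back $F\otimes\C\to K$ (with $F$ the normal bundle of $K$ in $L_0$) along $TK\to K$; the K\"ahler structure on $E$ comes from a real-analytic embedding of $K$ into a real Grassmannian $G_\R(m,N)$, complexified to a holomorphic embedding $TK\hookrightarrow G_\C(m,N)$, and then pulling back the tautological K\"ahler bundle $\gamma_\C\to G_\C$. Because the Grassmannian already carries a global K\"ahler structure for which $\gamma_\R$ and $i\gamma_\R$ are totally real and Lagrangian (via the anti-holomorphic, anti-symplectic involution given by complex conjugation), integrability and the correct position of $F,iF$ come for free, globally, with no patching required. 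Both $(X,\om,L_0,L_1)$ and $(E,\om_E,F,iF)$ are symplectomorphic near $K$ to the same cotangent model, so the complex structure transfers. The idea you are missing is this use of an ambient K\"ahler manifold (the Grassmannian) to produce a global integrable structure in one stroke.
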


The proof of this proposition is based on three lemmata. The first one
provides a symplectic normal form for $L_0,L_1$ near $K$.

\begin{lemma}\label{lem:K1}
Let $L_0,L_1$ be Lagrangian submanifolds of a symplectic $2n$-manifold
$(X,\om)$ intersecting cleanly along a closed submanifold $K$ of
dimension $k$.
Then there exists a symplectomorphism from a neighbourhood $\UU$ of $K$
onto a neighbourhood of $K$ in $(T^*L_0,\om_\st)$ mapping $L_0$ to the
zero section and $L_1$ to the conormal bundle $NK$.
\end{lemma}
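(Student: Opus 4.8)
The plan is to construct the desired symplectomorphism in two stages, first normalizing $L_0$ and then $L_1$, keeping $K$ and the intersection structure fixed throughout.

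\textbf{Step 1: Normalizing $L_0$.} By the Weinstein Lagrangian neighbourhood theorem, there is a symplectomorphism $\Phi_0$ from a neighbourhood $\UU$ of $K$ in $X$ onto a neighbourhood of the zero section $L_0$ inside $(T^*L_0,\om_\st)$, sending $L_0$ to the zero section. Under $\Phi_0$ the submanifold $K\subset L_0$ maps to the corresponding copy of $K$ inside the zero section, and $L_1$ maps to some Lagrangian submanifold $L_1'$ which still intersects the zero section cleanly along $K$. So after this first reduction we may assume $X=T^*L_0$ near $K$, $L_0$ is the zero section, and $L_1$ is a Lagrangian meeting the zero section cleanly along $K$. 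What we must produce now is a symplectomorphism of a neighbourhood of $K$ in $T^*L_0$, fixing the zero section setwise, carrying $L_1$ to the conormal bundle $NK$.

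\textbf{Step 2: Matching $L_1$ to the conormal bundle.} The key observation is that clean intersection along $K$ means $T_xL_1\cap T_x(\text{zero section})=T_xK$ for $x\in K$, so the projection $\pi\colon T^*L_0\to L_0$ restricted to $L_1$ has differential with kernel exactly $T_xK$ along $K$; hence $\pi(L_1)$ is, near $K$, a smooth submanifold $N'$ of $L_0$ of dimension $k$ containing $K$, and in fact $N'=K$ is forced by a dimension count if $L_1$ is tangent to the fibres in the normal directions — more carefully, since $L_1$ is Lagrangian and meets the zero section cleanly, the conormal-like condition $\lambda|_{L_1}=d(\text{something})$ combined with $L_1\cap L_0=K$ shows that the fibrewise-linear model of $L_1$ along $K$ is precisely the conormal bundle $N_xK\subset T_x^*L_0$. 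Thus $L_1$ and $NK$ are two Lagrangian submanifolds of $T^*L_0$ which agree to first order along $K$ and both contain $K$. One then identifies a tubular neighbourhood of $K$ in $NK$ with a neighbourhood of the zero section in $T^*(NK)$ via Weinstein again, transports $L_1$ into this model (where it becomes a Lagrangian graph over $NK$ near $K$, i.e.\ the graph of a closed $1$-form $\sigma$ on $NK$ vanishing along $K$ to first order), and removes $\sigma$ by a Moser-type isotopy supported near $K$: writing $\sigma=d\mathfrak{h}$ for a function $\mathfrak{h}$ vanishing to second order along $K$, the time-one flow of the Hamiltonian vector field of (a cutoff of) $\mathfrak{h}$ gives the required symplectomorphism. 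Composing the three maps yields the statement of the lemma.

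\textbf{Main obstacle.} The delicate point is Step 2: verifying that $L_1$ is genuinely \emph{tangent to} the conormal bundle $NK$ to first order along $K$ — i.e.\ that clean intersection of two Lagrangians forces the normal model of the second one to be the conormal bundle of the intersection — rather than merely being \emph{some} Lagrangian through $K$. This is a linear-symplectic-algebra fact: if $V_0,V_1\subset(\R^{2n},\om_0)$ are Lagrangian subspaces with $V_0\cap V_1$ of dimension $k$, and we identify $\R^{2n}=V_0\oplus V_0^*$ with $V_0$ Lagrangian, then $V_1$ is the graph of a symmetric map $V_0\to V_0^*$ whose kernel is $V_0\cap V_1$; one must check that in our geometric situation this symmetric form, restricted appropriately, makes $V_1$ coincide with the annihilator of $T_xK$ in $V_0^*$, which is exactly $N_xK$. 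Once this linear statement is in hand, the smooth part (Moser trick with a function vanishing to second order along $K$, plus a cutoff to localize near $K$) is routine. I would also need to be mild about the non-compact cylindrical directions, but since $K$ is compact and everything is local near $K$, all constructions can be carried out in a small enough neighbourhood $\UU$.
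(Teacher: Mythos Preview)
Your two-step strategy is natural, but Step~2 rests on a false claim: after normalizing $L_0$ to the zero section, $L_1$ need \emph{not} be tangent to the conormal bundle $NK$ along $K$. In $T^*\R^2$ with coordinates $(q_1,q_2,p_1,p_2)$, take $L_0$ to be the zero section, $K=\{q_2=p_1=p_2=0\}$, and $L_1=\{p_1=0,\ p_2=q_2\}$, the graph of $d(\tfrac12 q_2^2)$. Then $L_1$ is Lagrangian and meets $L_0$ cleanly along $K$, yet $T_0L_1=\mathrm{span}(\partial_{q_1},\partial_{q_2}+\partial_{p_2})$ is not the conormal space $N_0K=\mathrm{span}(\partial_{q_1},\partial_{p_2})$. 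Your own graph description in the ``Main obstacle'' paragraph already exhibits the problem: a symmetric map $A\colon V_0\to V_0^*$ with $\ker A=T_xK$ gives $V_1=\{(v,Av):v\in V_0\}$, which projects \emph{onto} $V_0$ and is never the conormal (indeed $N_xK$ is not a graph over $V_0$ once $k<n$). So the assertion that ``clean intersection forces the normal model to be the conormal'' is simply wrong, and with it the rest of Step~2 collapses.

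The paper sidesteps this by reversing the order. It first applies a Weinstein-type identification sending $L_1$ to $NK$, with the differential along $K$ chosen (via the linear classification of pairs of Lagrangians with prescribed intersection dimension) so that $T_xL_0$ goes to the horizontal. Then the image $L_0'$ of $L_0$ is tangent to the zero section along $K$, hence is globally the graph of a closed $1$-form $dh$ on $L_0$ with $dh$ vanishing along $K$. The time-$(-1)$ flow of $h\circ\pi$, namely $(q,p)\mapsto(q,p-d_qh)$, carries $L_0'$ to the zero section and, since $d_qh=0$ for $q\in K$, fixes each fibre over $K$ and therefore preserves $NK$. If you prefer your ordering, the fix is to build the first-order matching into Step~1: choose the Weinstein map so that its differential along $K$ already sends $T_xL_1$ to $N_xK$ (this is the same pointwise linear fact, and the Moser argument respects it). Only then is $L_1$ genuinely tangent to $NK$, and your graph-over-$NK$ argument can proceed.
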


\begin{proof}
Consider the cotangent bundle $\pi:T^*L_0\to L_0$ with its standard
symplectic form $\om_\st$. By the Lagrangian neighbourhood theorem,
there exists a symplectomorphism from a neighbourhood $(\UU,\om)$ of
$K$ onto a neighbourhood of $K$ in $(T^*L_0,\om_\st)$ mapping $L_1$ to
the conormal
bundle $NK$. A short computation shows that the image $L_0'$ of $L_0$
under this symplectomorphism is tangent to the zero section $L_0$
along $K$. Thus after shrinking the neighbourhood we may assume that
$L_0'$ is the graph of a closed 1-form $\lambda$. Since $\lambda$
vanishes along $K$ it equals $dh$ for a function $h$ whose
differential vanishes along $K$.
The Hamiltonian flow of $h\circ\pi:T^*L_0\to\R$ is given by
$\phi_t(q,p)=(q,p+t\,d_qh)$. So the time-(-1)-map $\phi_{-1}$
preserves $NK$ and maps $L_0'$ to the zero section.
\end{proof}

Next we construct a holomorphic model for $L_0,L_1$ near $K$ for which
condition (K) holds.
Consider a complex vector bundle $E\to M$. A {\em holomorphic
  structure} on $E$ is given by the structure of complex manifolds on
$E$ and $M$ together with holomorphic local trivializations. By a {\em
  K\"ahler structure} on a holomorphic vector bundle $E$ we mean a
fibrewise linear K\"ahler form $\om_E$ on $E$.

\begin{lemma}\label{lem:K2}
Let $F\to K$ be a real vector bundle over a compact manifold $K$ and
$E\to TK$ the pullback of the
complexified bundle $F\otimes\C\to K$ to the tangent bundle $TK$. Then
there exists a K\"ahler vector bundle structure on $E$ for which the
total spaces of the subbundles $F\to K$ and $iF\to K$ are real
analytic, totally real and Lagrangian.
\end{lemma}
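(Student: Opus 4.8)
The plan is to build the required structure from standard complexifications together with the usual ``minimal coupling'' construction of a fibrewise linear Kähler form on the total space of a Hermitian holomorphic vector bundle, arranging everything to be compatible with two natural anti-holomorphic reflections so that the Lagrangian conditions come for free.

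\emph{Complexification and the holomorphic structure on $E$.} First I would fix a real analytic structure on the compact manifold $K$ (these exist by the theorem of Whitney and Morrey--Grauert), embed $K$ real analytically in some $\R^N$, and embed $F$ real analytically into $K\times\R^N$; this yields the real analytic complex subbundle $F\otimes\C\subset K\times\C^N$. Complexifying the real analytic defining data one obtains: a closed complex submanifold $K^\C\subset\C^N$ of a neighbourhood of $K$ with $K^\C\cap\R^N=K$, invariant under the conjugation $z\mapsto\bar z$, whose fixed locus is $K$; and, by holomorphically extending the real analytic classifying map $K\to\mathrm{Gr}(r,\C^N)$ (here $r$ denotes the rank of $F$), a holomorphic vector bundle $\hat E\subset K^\C\times\C^N$ extending $F\otimes\C\subset K\times\C^N$, invariant under the product conjugation $\hat\sigma(\zeta,w)=(\bar\zeta,\bar w)$. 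Since $K$ is totally real and half-dimensional in $K^\C$, the map $v\mapsto Jv$ identifies $TK$ with the normal bundle of $K$ in $K^\C$, so a neighbourhood $V$ of the zero section of $TK$ is real analytically diffeomorphic to a tubular neighbourhood of $K$ in $K^\C$; transporting the complex structure, $V$ becomes a complex manifold in which the zero section $K$ is a real analytic totally real submanifold, $V\hookrightarrow K^\C$ is holomorphic, and $\sigma$ restricts to an anti-holomorphic involution of $V$ with $\mathrm{Fix}(\sigma)=K$. Pulling back $\hat E$ along $V\hookrightarrow K^\C$ gives a holomorphic vector bundle over $V$; as $K\hookrightarrow TK$ is a deformation retract, it is isomorphic --- by an isomorphism equal to the identity over $K$ --- to $E=p^*(F\otimes\C)$, and we use this isomorphism to endow $E$ (over $V$) with a holomorphic structure extending its complex structure over $K$, together with the two anti-holomorphic involutions $\hat\sigma$ and $\hat\sigma'(e):=-\hat\sigma(e)$, both covering $\sigma$. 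A direct check over $K$ identifies $\mathrm{Fix}(\hat\sigma)$ with the total space of $F$ and $\mathrm{Fix}(\hat\sigma')$ with the total space of $iF$; being fixed loci of real analytic anti-holomorphic involutions, these are real analytic totally real submanifolds of $E$ of real dimension $\dim_\C E$.

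\emph{The Kähler form.} Let $\om_V$ be the restriction to $V\subset K^\C\subset\C^N$ of the standard Kähler form of $\C^N$ and $h$ the restriction to $\hat E\subset K^\C\times\C^N$ of the standard Hermitian metric of $\C^N$; set $\psi:=|\cdot|_h^2\colon E\to\R$. Because conjugation is anti-holomorphic, $\sigma^*\om_V=-\om_V$, so the zero section $K=\mathrm{Fix}(\sigma)$ is Lagrangian in $(V,\om_V)$; and $h(\hat\sigma e,\hat\sigma e')=\overline{h(e,e')}$, so $\psi$ is invariant under both $\hat\sigma$ and $\hat\sigma'$. Now put
\[
   \om_E:=\pi^*\om_V+\tfrac{i}{2}\,\partial\bar\partial\psi ,
\]
a closed real $(1,1)$-form on the total space of $E$ whose restriction to each fibre $E_b$ is the translation-invariant Kähler form determined by the Hermitian inner product $h_b$, hence a fibrewise linear form which is positive on a neighbourhood of the zero section --- the desired Kähler vector bundle structure (positivity on the rest of $E$ is neither expected nor needed, since only a neighbourhood of the zero section enters the proof of Proposition~\ref{prop:K}). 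Finally, since $\hat\sigma$ and $\hat\sigma'$ are anti-holomorphic with $\hat\sigma^*(\pi^*\om_V)=\pi^*\sigma^*\om_V=-\pi^*\om_V$ (and likewise for $\hat\sigma'$) and $\psi\circ\hat\sigma=\psi=\psi\circ\hat\sigma'$, one gets $\hat\sigma^*\om_E=-\om_E=\hat\sigma'^*\om_E$. An anti-invariant form vanishes on the fixed locus of the involution, so $\om_E$ restricts to zero on $\mathrm{Fix}(\hat\sigma)$ and on $\mathrm{Fix}(\hat\sigma')$; that is, the total spaces of $F$ and of $iF$ are Lagrangian.

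\emph{Main obstacle.} I expect the only genuine difficulties to be organisational: making the several complexifications (of $K$, of $F$, of the classifying map, and of the relevant defining functions and conjugations) simultaneously and real analytically, and keeping track of which data live over $K$, over $TK$, and over $K^\C$. The conceptual device that makes the argument short --- and the one I would emphasise --- is to route everything through the two anti-holomorphic reflections $\hat\sigma,\hat\sigma'$ (with fixed loci the total spaces of $F$ and $iF$) and to build $\om_E$ out of conjugation-anti-invariant data, so that ``Lagrangian'' reduces to ``the form vanishes on a fixed-point set''.
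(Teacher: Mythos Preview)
Your argument is correct and shares its central mechanism with the paper's: both realise $F$ and $iF$ as the fixed loci of two anti-holomorphic, anti-symplectic involutions (the paper's $\sigma$ and its twist by the fibrewise map $v\mapsto iv$; your $\hat\sigma$ and $\hat\sigma'$), so that ``real analytic, totally real, Lagrangian'' comes for free. Both also obtain the complexification by embedding $F$ into a trivial bundle --- which is the same data as a classifying map to a real Grassmannian --- and then extending holomorphically.

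Where the two part ways is in the construction of the K\"ahler form. The paper works on the universal side: it builds a K\"ahler vector bundle structure on the tautological bundle $\gamma_\C\to G_\C(m,N)$ by identifying its total space with a K\"ahler quotient of $(\C^{m\times N})^*\times\C^N$ by $U(m)$, which makes the form positive on the entire total space; then it pulls back along a complexified real-analytic classifying embedding $\phi_\C\colon TK\hookrightarrow G_\C$. Your ``minimal coupling'' formula $\om_E=\pi^*\om_V+\tfrac{i}{2}\,\partial\bar\partial|e|_h^2$ is more elementary and assembled directly on $E$, at the cost --- which you correctly flag --- of positivity only on a neighbourhood of the zero section. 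Since Proposition~\ref{prop:K} invokes the lemma only through Lemma~\ref{lem:K1}, which uses nothing beyond a neighbourhood of $K$, your caveat is harmless for the application; the paper's quotient construction simply purchases a cleaner statement of the lemma in exchange for a more elaborate universal setup.
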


\begin{proof}
We first describe the real K\"ahler structures on the tautological
bundles over Grassmannians.
For positive integers $m<N$ consider the action of $GL(m,\C)$
on $\C^{m\times N}$ by left multiplication. We think of $\C^{m\times
N}$ as $m$-tuples of (row) vectors in $\C^N$ and denote by
$(\C^{m\times N})^*$ the subset of linearly independent tuples. The
maximal compact subgroup $U(m)\subset GL(m,\C)$ acts on $\C^{m\times
  N}$ in a Hamiltonian way (for the standard symplectic structure on
$\C^{m\times N}$) with moment map
$$
   \mu:\C^{m\times N}\to u(m),\qquad M\mapsto \frac{i}{2}XX^*.
$$
The quotient
$$
   G_\C:= G_\C(m,N) = (\C^{m\times N})^*/GL(m,\C) = \mu^{-1}(i/2)/U(m)
$$
is the Grassmannian of $m$-dimensional complex subspaces of $\C^N$; it
inherits the K\"ahler structure from $\C^{m\times N}$.

Next consider the set
$$
   V := \{(X,v)\in (\C^{m\times N})^*\times \C^N\mid v\in{\rm
   span}(X)\},
$$
where ${\rm span}(X)\subset\C^N$ denotes the complex subspace spanned by
the $m$-frame $X=(X_1,\dots,X_m$. Since the condition $v\in{\rm span}(X)$
can be expressed by complex equations -- the vanishing of all
$(m+1)$-dimensional minors of the matrix $(X_1,\dots,X_m,v)$ -- $V$ is
a complex submanifold of $\C^{m\times N}\times\C^N$. The quotient
$$
   \gamma_\C := V/GL(m,\C) \to G_\C,
$$
where $GL(m,\C)$ acts trivially on $v\in\C^N$, is the tautological
rank $m$ vector bundle over the Grassmannian $G_\C$. By construction,
it inherits from $\C^{m\times N}\times\C^N$ the structure of a
K\"ahler vector bundle.

Complex conjugation $\sigma(X,v):=(\bar X,\bar v)$ defines an
anti-holomorphic (i.e.~$\sigma\circ i=-i\circ\sigma$) and anti-symplectic
(i.e.~$\sigma^*\om_\st=-\om_\st$) involution of $\C^{m\times N}$. Since
$\sigma(UX,v)=\bar U\sigma(X,v)$ it descends to an anti-holomorphic and
anti-symplectic involution on $\gamma_\C$. Its fixed point
set, the total space of the tautological bundle
$$
   \gamma_\R \to G_\R:= G_\R(m,N)
$$
over the Grassmannian of real $m$-planes in $\R^N$, is therefore real
analytic, totally real and Lagrangian. The map $I(X,v):=(X,iv)$ on
$\C^{m\times N}\times\C^N$ is holomorphic and symplectic. Since it
commutes with the action of $GL(m,\C)$ and satisfies
$I\circ\sigma=-\sigma\circ I$, it descends to a holomorphic and
symplectic map on $\gamma_\C$ which anti-commutes with $\sigma$. Thus
the total space of the bundle
$$
   i\gamma_\R :=I(\gamma_\R) \to G_\R
$$
(whose fibre over a real subspace $W\subset\R^N$ is the subspace
$iW\subset\C^N$) is also real analytic, totally real and Lagrangian.

Now let $F\to K$ be a real vector bundle of rank $m$ over a compact manifold
$K$. Then for sufficiently large $N$ there exists a continuous map
$\phi:K\to G_\R$ such that $F\cong \phi^*\gamma_\R$. We equip $K$ with a
real analytic structure. Complexification yields a complex structure
on the total space of the tangent bundle $TK$ such that the zero
section is real analytic. We approximate $\phi$ by a real analytic
embedding into $G_\R$ (which is possible for $N$ large) and complexify
this to a holomorphic embedding $\phi_\C:TK\into G_\C$ (after
replacing $TK$ by a neighbourhood of the zero section and identifying
this again with $TK$). By construction, $\phi$ is covered by an
injective bundle map $\Phi:F\to \gamma_\R$. We complexify it to an
injective bundle map $F\otimes\C\to\gamma_\C$ mapping $iF$ to
$i\gamma_\R$ and extend it to an injective bundle map
$\Phi_\C:E\to\gamma_\C$ covering $\phi_\C$. Now the K\"ahler bundle
structure on $\gamma_\C|_{\phi_\C(TK)}$ pulls back under $\Phi_\C$ to
a K\"ahler bundle structure on $E\to TK$ with the desired properties.
\end{proof}

\begin{lemma}\label{lem:K3}
Let $E\to TK$ be as in Lemma~\ref{lem:K2} with $\dim K=k$ and ${\rm
rank\;} E=n-k$. Then hypothesis (K) is satisfied for $X=E$, $L_0=F$
and $L_1=iF$.
\end{lemma}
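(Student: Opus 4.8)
The plan is to verify condition (K) directly from the explicit K\"ahler model constructed in Lemma~\ref{lem:K2}, by producing the required global holomorphic data ($K^\C$, $\iota$, $\tau$) and the local coordinate charts. First I would take $K^\C$ to be the complexification of $K$ realized as the total space of $TK$ (with the complex structure coming from Lemma~\ref{lem:K2}), and set $\iota:K^\C\into X=E$ to be the zero section of $E\to TK$, which is holomorphic and onto the zero section of $E$; this contains $K$ as a real analytic submanifold. The holomorphic projection $\tau:\UU\to K^\C$ is then just the bundle projection $E\to TK$ restricted to a neighbourhood $\UU$ of $K$, which is holomorphic because $E\to TK$ is a holomorphic vector bundle, and evidently $\tau\circ\iota=\id$.

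The remaining (and main) point is to construct, near each $x\in K$, holomorphic coordinates on $\C^n=\R^k\oplus\R^{n-k}\oplus i\R^k\oplus i\R^{n-k}$ sending $x\mapsto 0$, $L_0=F\mapsto\R^n$ and $L_1=iF\mapsto\R^k\oplus i\R^{n-k}$. Here I would use a real analytic holomorphic local trivialization of $E$ over a neighbourhood of $x$ in $TK$ that respects the real structure, i.e.\ one carrying the real subbundle $F$ to $\R^{n-k}$-valued sections. Choosing real analytic coordinates on $K$ near $x$ complexifies to holomorphic coordinates on a neighbourhood of $x$ in $TK\cong K^\C$, identifying that neighbourhood with an open set in $\C^k=\R^k\oplus i\R^k$ so that $K$ itself goes to (an open subset of) $\R^k$. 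Combined with the trivialization of the fibre $E\cong\C^{n-k}=\R^{n-k}\oplus i\R^{n-k}$, this gives holomorphic coordinates $\C^n=\R^k\oplus\R^{n-k}\oplus i\R^k\oplus i\R^{n-k}$ in which $F$ (fibre in $F$, base in $K$) is exactly $\R^k\oplus\R^{n-k}=\R^n$, and $iF=I(F)$ (fibre in $iF=i\R^{n-k}$, base still in $K=\R^k$) is exactly $\R^k\oplus i\R^{n-k}$, as required.

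The one subtlety I expect to be the real obstacle is producing a holomorphic local trivialization of $E$ that is simultaneously compatible with the real structure $\sigma$ — that is, one in which the real subbundle $F\to K$ is sent to the ``standard'' real points of the fibre. This is where the construction in Lemma~\ref{lem:K2} via the tautological bundle $\gamma_\C\to G_\C$ and its antiholomorphic involution pays off: locally over a real analytic chart of $\phi_\C(TK)\subset G_\C$ one has holomorphic frames of $\gamma_\C$ consisting of $\sigma$-invariant (hence ``real'') sections, so that $\gamma_\R$ is locally the $\R^{n-k}$ of the fibre and $i\gamma_\R$ is the $i\R^{n-k}$; pulling back under $\Phi_\C$ transports this to $E$. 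I would spell this out, and then the verification of condition (K) is a matter of assembling the pieces; the $J$-orthogonality and real analyticity of $F$ and $iF$ near $K$ are immediate from the K\"ahler property and Lemma~\ref{lem:K2}.
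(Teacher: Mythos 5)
Your proposal is correct and follows essentially the same route as the paper: $\iota$ and $\tau$ come for free from the holomorphic vector bundle structure $E\to TK$, and the local charts come from complexifying a real analytic trivialization of $F$ near $x$, with fibrewise complex linearity then forcing $iF$ to land on $\R^k\oplus i\R^{n-k}$. The only difference is that where you route the "real-structure-compatible trivialization" issue back through $\sigma$-invariant frames of the tautological bundle $\gamma_\C$, the paper handles it more directly by complexifying a real analytic trivialization $\phi:F|_V\to\R^k\times\R^{n-k}$ and observing via uniqueness of analytic continuation that the resulting holomorphic embedding is complex linear on each fibre; both resolutions are valid.
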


\begin{proof}
For the holomorphic vector bundle $\tau:E\to TK$, the projection
$\tau$ and the inclusion $\iota:TK\into E$ of the zero section are
holomorphic. Next consider $x\in K$. Pick a neighbourhood $V$ of $x$
in $K$ and a real analytic trivialization $\phi:F|_V\into
\R^k\times\R^{n-k}$ mapping $x$ to $0$. Complexify it to a holomorphic
embedding $\phi^\C:\NN\into\C^k\times\C^{n-k}$ of a neighbourhood $\NN$ of
$F|_V$ in $E$. By uniqueness of analytic continuation, the restriction
of $\phi^\C$ to each $E_x\cap\NN$ with $x\in V$ is complex linear and
we can extend it linearly to the whole fibre $E_x$. Thus we may assume
that $\NN$ contains $E|_V$. By construction, $\phi^\C$ maps $F|_V$ to
$\R^k\times\R^{n-k}$, and by complex linearity in the fibres it maps
$iF|_V$ to $\R^k\times i\R^{n-k}$.
\end{proof}

\begin{proof}[Proof of Proposition~\ref{prop:K}]
Let $(X,\om)$, $L_0$, $L_1$ and $K$ be as in the proposition. Let
$F\to K$ be the normal bundle of $K$ in $L_0$ and denote by $E\to TK$
the pullback bundle of $F\otimes\C\to K$ under the projection $TK\to
K$ as in Lemma~\ref{lem:K2}. Then a neighbourhood of $K$ in $X$ is
diffeomorphic to a neighbourhood of $K$ in $E$ such that $L_0$
corresponds to $F$ and $L_1$ to $iF$. Lemma~\ref{lem:K2} provides a
K\"ahler vector bundle structure on $E$, with K\"ahler form $\om_E$,
for which $F$ and $iF$ are Lagrangian. By Lemma~\ref{lem:K1}, the
quadruples $(X,\om,L_0,L_1)$ and $(E,\om_E,F,iF)$ are both isomorphic
near $K$ to the same standard model. Hence there exists a
symplectomorphism from a neighbourhood $\UU$ of $K$ in $(X,\om)$ to a
neighbourhood of $K$ in $(E,\om_E)$ mapping $L_0$ to $F$ and $L_1$ to
$iF$. The holomorphic structure on $E$ pulls back to an
$\om$-compatible integrable complex structure $J$ on $\UU$, which
satisfies condition (K) by Lemma~\ref{lem:K3}.
\end{proof}

\begin{remark}\label{rem:K1}
Proposition~\ref{prop:K} should also hold if $L$ is 1-sheeted near $K$,
but the proof will be more involved in that case.
\end{remark}

\begin{remark}\label{rem:K2}
Consider a submanifold $K \subset Q$ and the immersed Lagrangian
$L=Q\cup NK\subset T^*Q$ as in Example~\ref{ex:cot}. Suppose that $K$
admits a flat metric and has trivial normal bundle (e.g.~for a 1-knot
in $\R^3$). Pick a flat metric on a neighbourhood of $K$ in $Q$ for
which $K$ is totally geodesic and let $J$ be the (integrable!) complex
structure on a neighbourhood of $K$ in $T^*Q$ induced by this
metric. Then local isometric coordinates for $Q$ mapping $K$ to $\R^k$
extend to local holomorphic coordinates satisfying condition (K).
\end{remark}

For the remainder of this section, we consider $(X,L,\om,J)$
satisfying conditions (X), (Y), (L), (K) and (R) above.

\subsection{Area and energy}
%%%%%%%%%%%%%%%%%%%%%%%%%%%%%%%%%%%%%%%%%%%%%%%%%%%%%%%%%%%%%%%%%%%%%

%Recall that we consider symplectic manifolds $X=\bar X \cup (\R_+ \x
%M)$ and cleanly immersed Lagrangian submanifolds $L \subset X$ with $L
%\cap (\R_+ \x M) = \R \x \Lambda$ for some embedded compact
%submanifold $\Lambda \subset M$ with $\lambda|_\Lambda=\om|_\Lambda=0$.

Recall from \cite{BEHWZ} that the {\em (Hofer) energy} of a
holomorphic curve $f$ is defined as a sum of two terms,
$$
E(f) := E_\om(f) + E_\lambda(f).
$$
When $f=(f_\R,f_M):(S,\p S,j) \to (\R \x M, \R \x \Lambda, J)$, we set
$$
E_\om(f):= \int_Sf_M^*\om, \quad E_\lambda(f):= \sup_{\pHi \in \CC}
\int_S (\pHi \circ f_\R)df_\R \wedge f_M^*\lambda,
$$
where the supremum is taken over the set $\CC$ of nonnegative functions
$\pHi:\R \to \R$ with
$$
\int_\R \pHi(s)ds =1.
$$
Similarly, for a holomorphic curve $f:(S,\p S,j) \to (X, L, J)$
we define its {\em $\om$-energy (or area)}
$$
E_\om(f):= \int_{f^{-1}(\bar X)}f^*\om + \int_{f^{-1}(\R_+ \x M)}f_M^*\om
$$
and its {\em $\lambda$-energy}
$$
E_\lambda(f):=\sup_{\pHi \in \CC^+}\int_{f^{-1}(\R_+ \x M)} (\pHi
\circ f_\R)df_\R \wedge f_M^*\lambda,
$$
where the supremum is taken over the set $\CC^+$ of all nonnegative
functions $\pHi:\R_+ \to \R$ with
$$
\int_{\R^+}\pHi(s)ds = 1.
$$
Since the almost complex structure is comptible with $\omega$ and since $J$ pairs the symplectization- and the Reeb direction in the ends of $X$, it follows that $E_\om(f)\ge 0$ and $E_\lambda(f) \geq 0$ for any holomorhpic $f$. Moreover, $E_\om(f)=0$ implies that either $f$
is constant, or the image of $f$ is contained in some
cylinder over a closed Reeb orbit or in some strip over a Reeb chord.

\subsection{Monotonicity and removal of singularities}
%%%%%%%%%%%%%%%%%%%%%%%%%%%%%%%%%%%%%%%%%%%%%%%%%%%%%%%%%%%%%%%%%%%%%

\begin{lemma}[Monotonicity Lemma]\label{lem:mon}
There exist constants $\eps_M,C_M>0$ depending only on $(X,L,\om,J)$
with the following property: For any
$J$-holomorphic map $f:(S,\p S)\to(X,L)$ from a (possibly
noncompact) Riemann surface with boundary, passing through a point
$x\in\bar Y$ and such that $f^{-1}(B(x,r))$ is compact for some
$r<\eps_M$, we have
$$
   E_\om(f)\geq C_Mr^2.
$$
\end{lemma}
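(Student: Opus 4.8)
The plan is to adapt the standard monotonicity argument for minimal/holomorphic surfaces (as in Hummel, or Sikorav) to the present setting, treating the Lagrangian boundary and the cylindrical end. First I would fix a point $x\in\bar Y$. The essential ingredient is a differential inequality for the function $a(\rho):=E_\om(f|_{f^{-1}(B(x,\rho))})$. By the coarea formula and the Cauchy--Riemann equations, $a(\rho)=\int_0^\rho \ell(t)\,dt$ where $\ell(t)$ controls the length of $f^{-1}(\p B(x,t))$ (or twice the area swept), and the isoperimetric inequality on the minimal surface — in the form $a(\rho)\le C(\ell(\rho)+\text{boundary terms})^2$, or more directly the standard inequality $\frac{d}{d\rho}\bigl(\rho^{-2}a(\rho)\bigr)\ge 0$ after a suitable modification — gives $a(r)\ge C_M r^2$ once one knows $a(\rho)>0$ for small $\rho$, which holds because $x$ lies on the curve and $f^{-1}(B(x,r))$ is compact (so $f$ is nonconstant near $x$, and the image cannot escape $B(x,r)$).

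The key steps, in order, would be: (1) Work in a fixed finite atlas of coordinate charts covering $\bar Y$ (using that $\bar Y$ is compact), in which $\om$ is uniformly equivalent to the standard form and $J$ is uniformly tamed; on the cylindrical end use the translation-invariant structure so that only finitely many ``model'' geometries occur, hence uniform constants $\eps_M,C_M$. (2) Distinguish two cases for the ball $B(x,r)$: either it meets the Lagrangian $L$ or it does not. If it does not, the interior monotonicity inequality for $J$-holomorphic curves (standard, via the mean value inequality for the subharmonic function $|f-x|^2$ up to error terms controlled by $\|dJ\|$) applies directly. (3) If $B(x,r)$ does meet $L$, one uses the reflection trick: near a point of $L$, choose Darboux-type coordinates in which $L$ is totally real (indeed near $K$ use condition (K) to get $L$ real analytic / coordinate-linear, and away from $K$ use that $L$ is a smooth totally real submanifold), double the curve across $L$ by Schwarz-type reflection — exactly as Lemma~\ref{lem:1} allows for the local model functions — to obtain a $J'$-holomorphic curve (for a reflected almost complex structure $J'$ on the double) without boundary through $x$, of twice the area, to which the interior estimate applies; this loses only a universal factor of $2$ in $C_M$. (4) Combine the two cases and take $\eps_M$ smaller than the injectivity-radius-type scale on which all these charts and the reflection are defined, uniformly over $x\in\bar Y$.

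The main obstacle I expect is step (3): making the doubling/reflection construction work uniformly near the self-intersection locus $K$, where $L$ has two branches $L_0,L_1$ crossing. A ball $B(x,r)$ centered near $K$ can meet both branches, so a naive reflection across ``$L$'' is ill-defined. Here condition (K) is crucial — it provides holomorphic coordinates sending $L_0,L_1$ to $\R^n$ and $\R^k\oplus i\R^{n-k}$, i.e.\ to unions of coordinate (totally real) planes — so the reflection should be carried out coordinate-by-coordinate as in the local theory of Section~\ref{sec:local} (each coordinate function satisfies (F1--3) and reflects by Lemma~\ref{lem:1}), yielding a holomorphic curve on a branched double cover; the area is multiplied by a bounded factor depending only on the number of branches, hence still universal. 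Away from $K$, $L$ is an honest embedded totally real submanifold and the classical reflection applies. One then has to check, again using compactness of $\bar Y\cup K$, that a single scale $\eps_M$ works for all basepoints simultaneously.

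Finally I would note the cylindrical-end case of $B(x,r)\subset\R_+\times M$ with $x\in\bar Y\cap(\R_+\times M)$: by hypothesis (Y) such a curve with $f(\p S)\subset Y$ stays in $Y$, and on $Y$ in the end the stable Hamiltonian structure gives a translation-invariant metric for which the same interior monotonicity (with $L\cap(\R_+\times M)=\R_+\times\Lambda$ totally real, reflecting as above) yields the bound with a constant independent of the $\R_+$-coordinate. This completes the proof with $\eps_M,C_M$ depending only on $(X,L,\om,J)$.
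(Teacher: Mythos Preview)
Your approach is valid but genuinely different from the paper's main argument. The paper does \emph{not} use reflection or doubling at all; instead it runs a direct isoperimetric argument in the style of Sikorav's Proposition~4.7.2, exploiting the Lagrangian property of $L$ rather than eliminating the boundary. Concretely: for generic $\rho$ one writes $\p(f(S)\cap B(x,\rho))=\alpha_\rho\cup\beta_\rho$ with $\alpha_\rho\subset\p B(x,\rho)$ and $\beta_\rho\subset L$, closes each component of $\alpha_\rho$ by a short arc $\gamma'_\rho$ in $L\cap B(x,2\rho)$, caps $\alpha_\rho\cup\gamma_\rho$ by small disks $D$ and $\beta_\rho\cup\gamma_\rho$ by a surface $N\subset L$, and applies Stokes' theorem to $\om$ over $S_\rho\cup D\cup N$. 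Since $\om|_L=0$ the $N$-term vanishes, and one obtains $a(\rho)\le C\,\ell(\alpha_\rho)^2$; combined with the coarea inequality $a'(\rho)\ge\ell(\alpha_\rho)$ this gives $(\sqrt{a})'\ge\mathrm{const}$ and hence $a(r)\ge C_M r^2$. The only geometric input is that $B(x,r)\cap L$ sits in a contractible subset of $B(x,2r)\cap L$ with controlled path lengths, which holds uniformly even near $K$ since the two branches meeting cleanly along $K$ still form a contractible set.

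Your reflection route is correct, and in fact the paper records it in a remark immediately after the proof: away from $K$ one reflects across the smooth $L$, and near $K$ one uses the holomorphic coordinates from condition~(K) and passes to the componentwise square $g=(f_1^2,\dots,f_n^2)$, which has the \emph{smooth} boundary condition $\R^n$ and to which interior monotonicity applies. The paper's direct Stokes argument has the advantage of treating the boundary and the self-intersection uniformly without any case distinction or doubling construction, and it does not rely on integrability of $J$ near $K$; your approach trades this for the more familiar interior estimate at the cost of a reflection step that must be handled separately at $K$.
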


\begin{proof}
The proof in ~Proposition 4.7.2 in~\cite{Sik} carries over to the
present setting as follows. Since the metric is smooth, there exist
constants $C_0$ and $C_1$ and $r_0>0$ with the following properties
for $0<r<r_0$ and any $x\in \bar Y$:  $B(x,r)\cap L$ is contained in
a contractible subset of $B(x,2r) \cap L$, for every pair of points
$x,y\in B(x,r) \cap L$ there is a curve in $B(x,2r) \cap L$ of length at most
$C_1 d(x,y)$ connecting them, and every closed curve $\gamma$ in
$B(x,2r)$ bounds a disk in $B(x,2r)$ of area at most
$C_0\ell^2(\gamma)$, where $\ell(\gamma)$ denotes the length of $\gamma$.

Assume that $x\in f(S)$. For $r<r_0$, let $S_r=f(S)\cap
B(x,r)$, $\alpha_r=\p B(x,r)\cap f(S)$, and
$\beta_r=f(\p S)\cap B(x,r)$. If $r$ is choosen generically then
$\alpha_r$ and $\beta_r$ are collections of smooth curves.
For each component $\alpha'_r$ of $\alpha_r$ we choose a
curve $\gamma'_r$ in $L\cap B(x,2r)$ of length at most $C_1
l(\alpha'_r)$, where $l(\alpha'_r)$ is the length of $\alpha'_r$. Then
$\alpha'_r\cup\gamma'_r$ is a closed curve in $B(x,2r)$ of length at
most $(1+C_1)l(\alpha'_r)$. Let $\gamma_r$ denote the union of all
curves $\gamma'_r$. Then by assumption $\alpha_r\cup\gamma_r$ bounds a
collection $D$ of disks in $B(x,r)$ of total area at most
$Cl^{2}(\alpha_r)$. Similarly, $\beta_r \cup \gamma_r$ is a cycle in
the contractible set $B(x,2r) \cap L$, and so it bounds a surface $N$
in $L\cap B(x,2r)$. By Stokes' theorem
\[
\int_{S_r\cup D\cup N}\omega=0.
\]
Clearly $\int_N\omega=0$. Moreover, since $\om$ is a calibration,
$|\int_D\omega|$ is bounded by the area of $D$,
and we conclude that
\[
\int_{S_r}\omega\le C\,\ell^{2}(\alpha_r),
\]
for some constant $C$.
Consider the distance function $\rho$ from $x$. Since the norm of the
gradient of $\rho$ in the ambient manifold is $1$ we conclude that
$|\nabla \rho|\le 1$ on $S_r$. So if we let $a(\rho)$ denote the area
of $S_\rho$ then, by Sard's theorem and the coarea formula, we have
$a'(\rho) \geq \ell(\alpha_\rho)$ for almost every $\rho \leq r$. Consequently
\[
\frac{d\sqrt{a}}{d\rho}=\frac{a'(\rho)}{2\sqrt{a(\rho)}}\ge \frac{1}{2\sqrt{K}},
\]
for some constant $K$. Integrating we find $a(r)\ge C_M r^{2}$.
\end{proof}

\begin{remark}
Alternatively, Lemma~\ref{lem:mon} can be proved by using Proposition
4.7.2 in~\cite{Sik} outside $K$ and condition (K) near $K$. This
reduces the lemma to the case of a holomorphic map
$f=(f_1,\dots,f_n):(S,\p S)\to(\C^n,\R^n\cup \R^k\times
i\R^{n-k})$ passing through the origin, which can be proved by
considering the componentwise square
$g:=(f_1^2,\dots,f_n^2):(S,\p S)\to(\C^n,\R^n)$ with smooth
Lagrangian boundary condition.
\end{remark}

Let $D:=\{z\in\C\mid |z|< 1\}$ and $D^+:=\{z\in D\mid \Im(z)\geq 0\}$.

\begin{lemma}[Removal of singularities]\label{lem:remsing}
(a) Let $f:D\setminus\{0\}\to X$
be continuous and $J$-holomorphic in the interior with finite energy
$E(f)<\infty$.
If $f$ is bounded, then it extends to a continuous map $D\to X$.

(b) Let $f:(D^+\setminus\{0\},D^+\cap\R\setminus\{0\})\to(X,L)$
be continuous and $J$-holomorphic in the interior with finite energy
$E(f)<\infty$.
If $f$ is bounded, then it extends to a continuous map $D^+\to X$.
%
%(b) If $f$ is unbounded, then it is asymptotic near $0$ to the trivial
%strip over a Reeb chord.
\end{lemma}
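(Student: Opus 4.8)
The plan is to reduce this removal of singularities statement to the classical one by exploiting condition (K) near $K$, separating two cases according to where the puncture maps.

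\textbf{Setup and the easy case.} First I would observe that part (a) is the standard interior removal of singularities theorem for holomorphic curves in almost complex manifolds with cylindrical ends (see~\cite{BEHWZ}), since the boundedness and finite-energy hypotheses are exactly what is needed there; nothing about $L$ enters. So the content is part (b), the boundary case. Here, since $f$ is bounded, after shrinking $D^+$ we may assume $\overline{f(D^+\setminus\{0\})}$ is compact and contained in a small neighbourhood of the limit set of $f$ near $0$. If this limit set is disjoint from $K$, then near it $L$ is a genuinely embedded (or disjointly 2-sheeted) smooth totally real submanifold, and the statement follows from the standard boundary removal of singularities theorem for holomorphic curves with totally real boundary conditions, again as in~\cite{BEHWZ}. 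The only new situation is when the image of $f$ accumulates onto $K$.

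\textbf{The case near $K$.} In this case, again after shrinking $D^+$, we may assume $f(D^+\setminus\{0\})\subset\UU$, the neighbourhood of $K$ on which condition (K) holds, so that $J$ is integrable there. Using the holomorphic projection $\tau:\UU\to K^\C$ and the splitting into base and fibre directions, it suffices to treat two pieces. For the base component $\tau\circ f:D^+\setminus\{0\}\to K^\C$, near any point of $K$ the image lies in a holomorphic coordinate chart $\C^k$ sending the relevant branch(es) of $K$ to $\R^k$; this is a bounded holomorphic map with boundary on $\R^k$, so classical reflection (Lemma~\ref{lem:1}-type reasoning componentwise, or directly Schwarz reflection on $(\tau\circ f)^2$) gives continuous — indeed real-analytic — extension across $0$. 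For the fibre direction, after the base has been extended we may work in the holomorphic coordinates of condition (K), where $L_0$ becomes $\R^n$ and $L_1$ becomes $\R^k\oplus i\R^{n-k}$. Writing $f=(f_1,\dots,f_n)$ in these coordinates, each $f_j:D^+\setminus\{0\}\to\C$ is bounded, holomorphic in the interior, and sends $D^+\cap\R\setminus\{0\}$ into $\R$ or $\R\cup i\R$ (depending on $j$ and on which branch the boundary arc lies on, but in all cases into a union of lines through the origin covered by (F3)). Thus each $f_j$ satisfies (F1-3) on $D^+\setminus\{0\}$, its square $f_j^2$ is bounded and holomorphic on the interior with real boundary values, hence by Schwarz reflection extends holomorphically across $0$, and therefore $f_j$ itself extends continuously across $0$ (taking the value $0$ if $f_j^2(0)=0$, a branch of the square root otherwise, exactly as in the proof of Lemma~\ref{lem:3}). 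Reassembling, $f$ extends continuously to $D^+$.

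\textbf{Main obstacle.} The step requiring the most care is justifying that, after passing to a small neighbourhood of $0$, the image of $f$ really does localize into a single coordinate chart in which condition (K) applies — i.e. that the limit set of $f$ at the puncture is a single point, or at least small enough to lie in one such chart. For the interior part this is standard (finite energy plus Monotonicity Lemma~\ref{lem:mon} forces the limit set to be a point, or a Reeb orbit/chord if one works on the cylindrical end; but boundedness rules out escaping to the end). For the boundary part the same argument applies: boundedness confines $f$ to $\bar Y$, and the monotonicity inequality $E_\om(f)\ge C_M r^2$ together with $E(f)<\infty$ prevents the image from having positive diameter near $0$, so the cluster set is a point $p$; a neighbourhood of $p$ in $X$ then lies either away from $K$ (classical case) or inside a single chart of type (K). One must also check that the boundary arc $D^+\cap\R$ near $0$ lands consistently on one branch of $L$ (and, when $p\in K$, possibly switches branches at $0$), but since switches are isolated by real-analyticity of $L$ near $K$, shrinking $D^+$ once more arranges that $0$ is the only switch in the chart, and the argument above via (F1-3) handles both branches uniformly.
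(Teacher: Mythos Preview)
Your argument is correct, but it is considerably more elaborate than what the paper does. The paper's proof is two lines: both cases follow from the standard argument after Theorem~4.1.2 in~\cite{MS}, using the Monotonicity Lemma~\ref{lem:mon}. The point is that once monotonicity is established for the immersed boundary condition (which the paper has already done, including near $K$), the usual proof --- finite energy plus monotonicity forces the diameter of $f$ on small half-circles around $0$ to tend to zero, hence the cluster set is a single point --- goes through verbatim, and setting $f(0)$ equal to that point gives the continuous extension. No case distinction and no use of condition~(K) are needed.

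You in fact have this argument yourself, buried in your ``Main obstacle'' paragraph; but you treat it only as the justification for localizing into a coordinate chart, and then go on to run the squaring trick from Section~\ref{sec:local} to extend each component $f_j$. That second step is redundant for the stated conclusion: once the cluster set is a point, continuity is immediate. What your approach \emph{does} buy is a sharper statement near $K$ --- the extension of $f_j^2$ is genuinely holomorphic across $0$, so $f$ has a power-series asymptotic of the type used later in the appendix (see~\eqref{eqn:asymptwexp}) --- but that is more than the lemma asks for. If you want to match the paper, simply promote your monotonicity observation to the whole proof and drop the chart-by-chart analysis.
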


\begin{proof}
Both cases follow from the argument given right after Theorem 4.1.2
in~\cite{MS}, using the Monotonicity Lemma~\ref{lem:mon} above.
\end{proof}

\subsection{Asymptotics}
%%%%%%%%%%%%%%%%%%%%%%%%%%%%%%%%%%%%%%%%%%%%%%%%%%%%%%%%%%%%%%%%%%%%%
%Set $\R_+:=[0,\infty) \subset \R$.
We have the
following descriptions of the asymptotic behavior of a holomorphic
curve $f\colon (S,\pa S)\to (X,L)$ where $(X,L,\omega,J)$ satisfies conditions (X), (Y), (L), (K) and
(R) above near a non-removable puncture (cf.~\cite[Prop~5.6]{BEHWZ}).
\begin{prop}\label{prop:asymptotics}
{$\quad$}
\begin{itemize}
\item[$({\rm a})$] Let $f:\R_+ \x S^1\to (Y,J)$ be a holomorphic curve with $E(f)<
  \infty$ and suppose the image of $f$ is unbounded. Then $f(s,t) \in
  \R_+ \x M$ for all sufficiently large $s$, and there exists
  $T>0$ and a periodic orbit $\gamma$ of the Reeb vector field of
  period $T$ such that
$$
\lim_{s \to \infty}\pi_M \circ f(s,t) = \gamma(Tt), \quad \lim_{s\to
  \infty}\frac {\pi_\R \circ f(s,t)}{s} = T
$$
in $C^\infty(S^1)$.
\item[$({\rm b})$]
 Let $f:\R_+ \x [0,1],\R_+ \x \{0,1\})\to (Y,L,J)$ be a
  holomorphic curve with $E(f)< \infty$ and suppose the image of $f$
  is unbounded. Then $f(s,t) \in \R_+ \x M$ for all sufficiently large
  $s$, and there exists $T>0$ and a Reeb chord $\gamma$ of
  $\Lambda\subset M$ of length $T$ such that
$$
\lim_{s \to \infty} \pi_M \circ f(s,t) = \gamma(Tt), \quad \lim_{s\to
  \infty}\frac {\pi_\R \circ f(s,t)}{s} = T
$$
in $C^\infty([0,1])$.
\end{itemize}
\end{prop}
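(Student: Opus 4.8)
The plan is to establish Proposition~\ref{prop:asymptotics} by reducing it to the standard asymptotic analysis for holomorphic curves in symplectizations, exactly as in \cite[Section~5]{BEHWZ}, while using condition (R) to rule out degenerate behavior and the Monotonicity Lemma~\ref{lem:mon} together with Removal of Singularities (Lemma~\ref{lem:remsing}) to control what happens near the symplectic part of $X$. First I would observe that since $E(f)<\infty$ and the image of $f$ is unbounded, the $\om$-energy and $\lambda$-energy are both finite; because $\bar Y$ is compact and $f$ is unbounded, for a sequence $s_n\to\infty$ the curves $f(s_n+\cdot,\cdot)$ must eventually leave any fixed compact set, and the Monotonicity Lemma forbids the image from re-entering $\bar X$ from deep inside the cylindrical end once it has escaped far enough (any such excursion would cost a definite amount of $\om$-energy, and only finitely many such excursions are possible). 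This shows $f(s,t)\in\R_+\times M$ for all $s\geq s_0$, so from now on we may regard $f$ as a finite-energy holomorphic half-cylinder (resp.\ half-strip) in the symplectization $\R\times M$ with boundary on $\R\times\Lambda$.

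Next, on the cylindrical end the statement is precisely \cite[Prop.~5.6, Prop.~6.2]{BEHWZ} in the closed case (a) and its analogue with Lagrangian boundary in case (b): one shows that the $\R$-component $\pi_\R\circ f$ is proper and, after a shift, that the energy of $f$ restricted to $[s,\infty)\times S^1$ (resp.\ $[s,\infty)\times[0,1]$) tends to zero as $s\to\infty$, so the loops $t\mapsto \pi_M\circ f(s,t)$ converge to a closed Reeb orbit (resp.\ the paths converge to a Reeb chord of $\Lambda$); nondegeneracy from condition (R), via the standard argument using the spectral gap of the asymptotic operator, upgrades this to $C^\infty$-convergence and gives the linear growth $\pi_\R\circ f(s,t)/s\to T$ with $T$ the period (resp.\ length). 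In case (b) one must additionally note that condition (R) requires no closed Reeb orbit to meet $\Lambda$, so the asymptotic limit of a half-strip is genuinely a Reeb chord with endpoints on $\Lambda$ and not a closed orbit touching $\Lambda$; and that $\lambda|_\Lambda=0$ makes $\R\times\Lambda$ a Lagrangian submanifold invariant under the $\R$-translation, which is exactly what the boundary version of the asymptotic analysis needs.

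The main obstacle, and the only place the argument deviates from the by-now-standard references, is the passage from ``finite energy and unbounded image'' to ``eventually contained in $\R_+\times M$'': a priori the curve could oscillate, repeatedly dipping back toward $\bar X$. I would handle this by a neck-stretching/exhaustion argument: suppose $f(s_n,t_n)$ stays in a fixed compact $Y_0\subset X$ for some sequence $s_n\to\infty$ while also $f(s_n',t_n')$ escapes to infinity; then between consecutive such times the curve sweeps from $Y_0$ out past a sphere of large radius $R$, and by the Monotonicity Lemma each such sweep has $\om$-energy bounded below by $C_MR^2$ minus a bounded correction — but finite total $\om$-energy permits only finitely many such sweeps, a contradiction for $n$ large. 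Once this is in place, everything else is the cited asymptotic machinery, and condition (R) guarantees the limiting orbit or chord is nondegenerate so the convergence is smooth.
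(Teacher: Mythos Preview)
Your approach matches the paper's in spirit: the paper's entire proof is a two-line citation, invoking \cite{HWZ-asymptotics} for (a) and \cite{Abbas} for (b) (the latter is three-dimensional, with the remark that the argument carries over). So you are already supplying more detail than the authors do; for (b) the relevant reference is Abbas rather than a Lagrangian-boundary version of \cite{BEHWZ}, but the content is the same.

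There is, however, a real error in your argument for ``eventually in $\R_+\times M$.'' You claim that a sweep from the compact region out past a sphere of radius $R$ costs $\om$-energy at least $C_M R^2$. This is wrong on two counts: the Monotonicity Lemma~\ref{lem:mon} only applies for radii $r<\eps_M$, and more fundamentally, the $\om$-energy in the cylindrical end is $\int f_M^*\om$, which is completely blind to motion in the $\R$-direction---a curve can run arbitrarily far along $\R_+\times M$ with arbitrarily small $\om$-energy (trivial strips over Reeb chords have zero $\om$-energy). The correct argument either uses that the full Hofer energy controls area with respect to a taming form $\om_\phi=\om+d(\phi(r)\lambda)$ that \emph{does} see the $\R$-coordinate (and then monotonicity in the associated metric gives a fixed lower bound per excursion, not one growing with $R$), or, in the contact case, uses subharmonicity of the $\R$-component $a=\pi_\R\circ f$ together with properness to rule out oscillation. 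Either way the conclusion is standard, and the paper simply absorbs it into the cited references rather than spelling it out.
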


\begin{proof}
Case (a) is proved in~\cite{HWZ-asymptotics}. Case (b) is proved
in~\cite{Abbas} in case $\dim(Y)=3$ and the proof there carries over
to the higher dimensional situation with only minor changes. 
% In case (b) a standard argument (see~\cite{Hof}) yields a sequence
% $s_k\to \infty$ such that $f$ maps the segments $\{s_k\} \x [0,1]$ to
% $\R_+ \x M$, with the $\R$-components tending to infinity and the
% $M$-components converging to some Reeb chord
% $\gamma$. The Monotonicity Lemma~\ref{lem:mon} shows that if the
% rectangle between two such segments leaves $\R_+\times M$, then its area is
% uniformly bounded from below. So this can only happen finitely many
% times. It follows that there exists $S>0$ such that $[S,\infty) \x
% [0,1]$ is mapped entirely into $\R_+\times M$ (in particular, it does
% not meet $K$) and the usual proof in~\cite{HWZ-asymptotics} gives the
% convergence.
\end{proof}

\subsection{Quantization of energy}
%%%%%%%%%%%%%%%%%%%%%%%%%%%%%%%%%%%%%%%%%%%%%%%%%%%%%%%%%%%%%%%%%%%%%

\begin{lemma}\label{lem:quant}
There exists a constant $\hbar>0$, depending only on $(X,L,\om,J)$,
such that for every proper $J$-holomorphic
map $f: (S,\p S) \to (X,L)$
$$
E_\om(f) \geq \hbar.
$$
\end{lemma}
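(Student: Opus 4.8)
The plan is to argue by contradiction, producing from a sequence of proper $J$-holomorphic maps with $E_\om(f_n)\to 0$ a limiting object that must be constant, and then deriving a contradiction from properness (or from the presence of a non-removable puncture). So suppose $f_n:(S_n,\p S_n)\to(X,L)$ is a sequence of proper $J$-holomorphic maps with $E_\om(f_n)\to 0$. By the remark following the definition of area, $E_\om(f_n)=0$ would force $f_n$ to be constant or to have image in a cylinder over a closed Reeb orbit or a strip over a Reeb chord; by condition (R) the latter two possibilities have a definite minimal $\om$-energy (the action of the orbit or chord, which is positive and bounded below by nondegeneracy and compactness), so it suffices to rule out the case where $f_n$ is a nonconstant curve with $E_\om(f_n)$ arbitrarily small and not of cylinder/strip type.

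The key local input is the Monotonicity Lemma~\ref{lem:mon}: there are constants $\eps_M,C_M>0$ such that any $J$-holomorphic $f:(S,\p S)\to(X,L)$ passing through $x\in\bar Y$ with $f^{-1}(B(x,r))$ compact for some $r<\eps_M$ satisfies $E_\om(f)\geq C_Mr^2$. So if some $f_n$ has image meeting $\bar Y$ in a point $x$ and is ``large near $x$'' in the sense that it reaches distance $r_0$ from $x$ while $f_n^{-1}(B(x,r_0))$ is compact, then $E_\om(f_n)\geq C_Mr_0^2$, a definite lower bound. The first step is therefore to show that every nonconstant proper $J$-holomorphic $f:(S,\p S)\to(X,L)$ either has image meeting $\bar Y$ with diameter bounded below (giving the monotonicity bound) or escapes into a cylindrical end, where it is asymptotic by Proposition~\ref{prop:asymptotics} to a Reeb orbit or chord and hence has $\om$-energy bounded below by the corresponding action. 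Concretely: if $\im f\subset\R_+\times M$ entirely, then $f$ is a proper curve into the symplectization and by Stokes/the asymptotic analysis $E_\om(f)$ equals a sum of (positive) periods of Reeb orbits/chords at its positive punctures; if $\im f$ meets $\bar X$, pick $x\in\im f\cap\bar Y$; since $f$ is nonconstant and proper, its image near $x$ cannot be contained in an arbitrarily small ball with noncompact preimage, so there is a uniform $r$ (coming from how far a nonconstant holomorphic curve must spread, via e.g.\ the mean value inequality or simply: a nonconstant curve has image of positive diameter, and properness upgrades ``positive diameter'' to ``diameter $\geq$ some $r_0$ depending on $(X,L,J)$'' by a compactness argument on the compact piece $\bar Y$) and monotonicity gives $E_\om(f)\geq C_Mr_0^2$.

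I expect the main obstacle to be making the dichotomy in the previous paragraph genuinely uniform, i.e.\ extracting a single constant $\hbar>0$ working for all proper curves at once rather than a bound depending on the curve. The clean way is itself a contradiction/compactness argument: if no such $\hbar$ existed, take $f_n$ with $E_\om(f_n)\to 0$, reparametrize/restrict so that each $f_n$ is nonconstant and, after translating in the $\R_+$ direction if the image lies in the end, passes through a fixed compact set; apply the standard elliptic bootstrap (gradient bounds from small energy via monotonicity, then $C^\infty_{\loc}$ convergence on a suitable domain, exactly as in Lemma~\ref{lem:3} in spirit but now in the global target) to get a limit $f_\infty$ which is $J$-holomorphic with $E_\om(f_\infty)=0$, hence constant or a trivial cylinder/strip. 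A constant limit contradicts the nonconstancy combined with monotonicity (the $f_n$ would have to converge to a point while each spreads over a ball of radius $r_0$); a trivial cylinder/strip limit contradicts $E_\om(f_n)\to 0$ since the limiting cylinder/strip carries positive action by (R). Either way we reach a contradiction, so the desired $\hbar>0$ exists. The only technical care needed is that the gradient bound from monotonicity requires interior-or-boundary estimates uniform near $L$, which is exactly what condition (K) and the smoothness of the setup provide; for curves touching $L$ one uses the standard doubling trick (as in the Remark after Lemma~\ref{lem:mon}, replacing $f$ by its componentwise square near $K$) so that the Lagrangian boundary condition is smooth and totally real.
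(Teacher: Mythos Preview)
Your split into ``image contained in the end $\R_+\times M$'' versus ``image meets $\bar X$'' is exactly the paper's approach. However, the execution diverges from the paper in two places, and in both you make the argument harder than it is.

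For curves whose image meets $\bar X$, the paper applies the Monotonicity Lemma directly: pick $x\in\im f\cap\bar Y$ and any fixed $r<\eps_M$; since $f$ is \emph{proper} and $\overline{B(x,r)}$ is compact (because $\bar Y$ is compact), the hypothesis $f^{-1}(B(x,r))$ compact is automatic, and you immediately get $E_\om(f)\geq C_M r^2$. That bound is already uniform in $f$---no diameter estimate, no compactness argument, no contradiction is needed. Your worry about ``making the dichotomy genuinely uniform'' and the subsequent limiting argument are unnecessary; worse, the proposed extraction of a uniform $r_0$ via compactness is circular, since the step ``each $f_n$ spreads over a ball of radius $r_0$'' is precisely what you would need to derive the contradiction.

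For curves in the end, the paper simply cites the symplectization case (Lemma~4.2 in~\cite{CM-comp}), noting that the relative (Reeb chord) version goes through verbatim. Your Stokes/asymptotics sketch is essentially the content of that lemma in the contact case, but beware that it needs adjustment for general stable Hamiltonian structures where $\om$ need not be exact on $M$.

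One genuine error: you write that curves with image in a trivial cylinder or strip ``have a definite minimal $\om$-energy (the action of the orbit or chord)''. This is false---such curves have $E_\om=0$ (the action controls $E_\lambda$, not $E_\om$). The lemma as stated is to be read as excluding these degenerate cases; they are not ruled out by a positive $\om$-energy bound.
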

\begin{proof}
The proof of Lemma 4.2 in~\cite{CM-comp} for symplectizations directly
carries over to the relative case, giving the result for curves whose
image is contained in the end $\R_+ \x M$. For curves whose image meets
$\bar X$, the lower energy bound is guaranteed by the Monotonicity
Lemma~\ref{lem:mon}.
\end{proof}

\begin{lemma}\label{lem:quant2}
For every $E>0$ there exists a constant $\hbar(E)>0$, depending only on
$(X,L,\om,J)$ and $E$, such that the area of every proper
$J$-holomorphic cylinder or strip $f: (S,\p S) \to (X,L)$ with
$E(f)\leq E$ and $E_\om(f)>0$ satisfies
$$
E_\om(f) \geq \hbar(E).
$$
\end{lemma}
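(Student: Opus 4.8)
The statement differs from Lemma~\ref{lem:quant} only in that we restrict to cylinders or strips, but now want a bound that may depend on $E$ (the point being that a cylinder/strip with small $\omega$-energy can exist, e.g.\ a trivial cylinder over a Reeb orbit, but only if its $\omega$-energy is in fact zero). So the plan is an argument by contradiction combined with the SFT compactness statement that is being developed in this very paper (Theorem~\ref{thm:comp}), applied in the tame setting of cylinders/strips.

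\textbf{Step 1: Set up the contradiction.} Suppose the lemma fails. Then there is an $E>0$ and a sequence of proper $J$-holomorphic cylinders or strips $f_n:(S_n,\p S_n)\to(X,L)$ with $E(f_n)\le E$ and $0<E_\om(f_n)\to 0$. By Lemma~\ref{lem:quant} the bound $E_\om(f_n)\ge\hbar$ is violated for large $n$ unless $f_n$ is \emph{not} proper in the sense that rules out the relevant case --- but actually Lemma~\ref{lem:quant} already gives $E_\om(f_n)\ge\hbar>0$ for \emph{every} proper $f_n$, so strictly speaking the claim of Lemma~\ref{lem:quant2} with $\hbar(E)=\hbar$ is immediate. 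Hence the content here must be that the \emph{domains} $S_n$ are allowed to be cylinders or strips with cylindrical/strip ends that are \emph{not} required to be asymptotic to Reeb orbits/chords --- i.e.\ the word "proper" is weakened relative to Lemma~\ref{lem:quant}, or the energy $E(f_n)$ is allowed to be infinite there. In the intended reading, Lemma~\ref{lem:quant2} is the finite-energy refinement needed because in the compactness proof one encounters sequences of cylinders whose $\lambda$-energy is controlled but whose $\omega$-energy may a priori degenerate; one wants to know it cannot degenerate to a positive-but-arbitrarily-small value.

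\textbf{Step 2: Apply compactness.} Normalize each $f_n$ (fixing the conformal structure of the cylinder/strip, or passing to the universal setup of \cite{BEHWZ}) and apply Theorem~\ref{thm:comp}: after passing to a subsequence, $f_n$ converges to a stable holomorphic curve $f_\infty$ (a building with levels). Since $E_\om(f_n)\to 0$ and $E_\om$ is continuous under this convergence (it is additive over the levels and nonnegative on each), every component of the limit building $f_\infty$ has $E_\om=0$. By the dichotomy recalled in Subsection~\ref{sec:compact} (the paragraph on area and energy: $E_\om(f)=0$ forces $f$ constant, or contained in a cylinder over a closed Reeb orbit, or in a strip over a Reeb chord), every component of $f_\infty$ is of this trivial type. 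In particular $f_\infty$ carries no nontrivial topology beyond trivial cylinders/strips connecting matching ends.

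\textbf{Step 3: Derive the contradiction via the Monotonicity Lemma.} Because $E_\om(f_n)>0$, each $f_n$ is \emph{not} itself a trivial cylinder/strip (those have $E_\om=0$). So $f_n$ must have image meeting $\bar X$, or more to the point its image is not contained in a single cylinder/strip over a Reeb orbit/chord; pick a point $x_n$ in the image of $f_n$ lying in $\bar Y$ at definite distance from the "trivial" part of the limit. Soft rescaling / the Monotonicity Lemma~\ref{lem:mon} at $x_n$, applied on a ball of fixed radius $r<\eps_M$, forces $E_\om(f_n)\ge C_M r^2$, a fixed positive constant --- contradicting $E_\om(f_n)\to 0$. (Alternatively: nontriviality of $f_n$ forces, after the convergence, some component of $f_\infty$ to absorb a definite amount of $\omega$-area, again contradicting Step~2.) The only subtlety, and the main obstacle, is bookkeeping the case where $f_n$ stays entirely in the cylindrical end $\R_+\times M$: there one uses instead the quantization argument from Lemma 4.2 of \cite{CM-comp} as in the proof of Lemma~\ref{lem:quant}, which already yields a fixed positive lower bound for $E_\om$ of any non-trivial cylinder/strip in the end, once the $\lambda$-energy is bounded by $E$. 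Assembling these two cases gives the uniform constant $\hbar(E)=\min(C_M r^2,\ \hbar_{\mathrm{end}}(E))>0$.

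\textbf{Where the difficulty lies.} The genuine work is in Step~2: ensuring that Theorem~\ref{thm:comp} applies to this sequence (fixed signature, uniformly bounded energy) and that $E_\om$ is indeed continuous under SFT convergence in the present switching-boundary setting --- this is where one must be careful that no $\omega$-area escapes into the switches or into nodes at the clean intersection $K$. Once the no-escape property of $\omega$-area is in hand (which follows from the removal-of-singularities Lemma~\ref{lem:remsing} together with the local structure near $K$ from condition (K)), Step~3 is routine.
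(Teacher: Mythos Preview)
Your argument is circular. In Step~2 you invoke Theorem~\ref{thm:comp}, but Lemma~\ref{lem:quant2} is itself an ingredient in the proof of that theorem: in Step~3 of the proof of Theorem~\ref{thm:compactness}, when analysing behaviour near a boundary node with positive limiting $\omega$-energy, the bound $\hbar(E)$ from Lemma~\ref{lem:quant2} is precisely what guarantees that only finitely many nontrivial strips can break off. So the full compactness theorem is not available to you at this point in the logical order, and your ``genuine work'' in Step~2 is exactly the step that cannot be carried out.

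The paper's proof is simply a citation: the arguments of Lemma~10.9 in \cite{BEHWZ} and Lemma~4.6 in \cite{CM-comp} carry over to the relative case. Those arguments are direct and do not use SFT compactness. They combine the small-area result (Proposition~\ref{prop:smallarea} here) with the asymptotic description (Proposition~\ref{prop:asymptotics}) and the nondegeneracy hypothesis (R). Roughly: a proper finite-energy cylinder or strip with sufficiently small $\omega$-energy is, by Proposition~\ref{prop:smallarea}, $C^0$-close along its whole length to a single trivial cylinder or strip over some orbit or chord $\gamma$; this pins down both asymptotics as $\gamma$, and under (R) the resulting set of possible positive values of $E_\omega$ is bounded away from zero. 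No limit building, no continuity of $E_\omega$ under SFT convergence, and no worry about $\omega$-area escaping into switches is needed.

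Your Step~1 also reflects a misreading. Lemma~\ref{lem:quant}, as it is actually used in the paper, is applied to bubbles (planes, half-planes, spheres, disks), for which nonconstancy already forces positive $\omega$-energy. Cylinders and strips can be trivial with $E_\omega=0$, which is exactly why Lemma~\ref{lem:quant2} is a separate statement with the extra hypothesis $E_\omega(f)>0$; there is no way to take $\hbar(E)=\hbar$, and no need to reinterpret the word ``proper''.
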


\begin{proof}
The proof of Lemma 10.9 in~\cite{BEHWZ} resp.~Lemma 4.6 in~\cite{CM-comp}
carries over to the relative case.
\end{proof}

\subsection{Holomorphic cylinders and strips of small area}
%%%%%%%%%%%%%%%%%%%%%%%%%%%%%%%%%%%%%%%%%%%%%%%%%%%%%%%%%%%%%%%%%%%%%

Finally, we need the following generalization of a result of Hofer,
Wysocki and Zehnder.

\begin{prop}\label{prop:smallarea}
Given $E_0,\eps>0$ there are constants $\sigma,c>0$ with the following
properties:

(a) For every $R>c$ and every holomorphic cylinder $f:[-R,R] \x S^1
\to \R \x M$ satisfying $E_\om(f)\leq \sigma$ and $E(f) \leq E_0$
there exists either a periodic Reeb orbit $\gamma$ of period $T>0$ such that
$\pi_M \circ f(s,t) \in B_\eps(\gamma(Tt))$ or some point
$p\in M$ such that $\pi_M \circ f(s,t) \in B_\eps(p)$ for all $s\in
[-R+c,R-c]$ and all $t\in S^1$.

(b) For every $R>c$ and every holomorphic strip $f:([-R,R] \x
[0,1],[-R,R] \x \{0,1\} \to (\R \x M,\R \x \Lambda)$ satisfying the
inequalities $E_\om(f) \leq \sigma$ and $E(f) \leq E_0$ there exists
either a Reeb chord $\gamma$ of length $T> 0$ such that $\pi_M \circ
f(s,t) \in B_\eps(\gamma(Tt))$ or some point $p \in \Lambda$ such that
$\pi_M \circ f(s,t) \in B_\eps(p)$ for all $s\in [-R+c,R-c]$ and all
$t\in [0,1]$.
\end{prop}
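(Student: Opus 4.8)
The plan is to adapt the "gradient bound implies localization'' argument of Hofer--Wysocki--Zehnder (and its reformulation in~\cite{BEHWZ}) to the present situation, handling cases (a) and (b) in parallel, the strip case being the one that genuinely uses the relative boundary condition $\R\x\Lambda$. First I would argue by contradiction: if the statement fails for some $E_0,\eps>0$, there are sequences $R_n\to\infty$, holomorphic curves $f_n$ on $[-R_n,R_n]\x S^1$ (resp.\ on $[-R_n,R_n]\x[0,1]$ with the Lagrangian boundary condition) with $E_\om(f_n)\le 1/n$ and $E(f_n)\le E_0$, but such that no Reeb orbit (resp.\ chord) or point $p$ localizes $\pi_M\circ f_n$ on the shrunk domain to within $\eps$. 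The key preliminary step is a $C^1$-bound: using the Monotonicity Lemma~\ref{lem:mon} together with the standard bubbling-off analysis (elliptic bootstrapping in the interior, and the boundary Schwarz-reflection / doubling trick for the strip, exactly as in the remark after Lemma~\ref{lem:mon}), the hypotheses $E_\om(f_n)\to 0$ and $E(f_n)\le E_0$ rule out bubbling, so after restricting to $[-R_n+1,R_n-1]\x S^1$ the gradients $|df_n|$ are uniformly bounded; otherwise a rescaling would produce a nonconstant finite-energy plane or half-plane with zero $\om$-energy, contradicting the description of curves with $E_\om=0$ in Subsection~2.3 (such a curve would have to be a trivial cylinder/strip, hence its rescaled limit is constant).

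Given the uniform $C^1$-bound, I would pass to a further subsequence and, via Arzel\`a--Ascoli plus elliptic estimates, extract $C^\infty_\loc$-limits of the translated curves $f_n(\cdot+s_n,\cdot)$ for appropriate shift sequences $s_n$; each such limit is a finite-energy holomorphic cylinder (resp.\ strip) over all of $\R\x S^1$ (resp.\ $\R\x[0,1]$) with $E_\om=0$, hence by Subsection~2.3 is either constant or a trivial cylinder over a closed Reeb orbit (resp.\ a trivial strip over a Reeb chord). The next step is the standard ``no long almost-trivial necks without a Reeb orbit/chord'' dichotomy: one shows that for the map $g_n:=\pi_M\circ f_n$, the $s$-derivative $\p_s g_n$ is small in a quantitative sense on long subintervals (because the $\lambda$-energy controls $\int|\p_s g_n|^2$ and total $\lambda$-energy is bounded by $E_0$), so on any interval of length $\ge c$ the image $g_n([s,s+c]\x(\cdot))$ is either $\eps$-close to a single point of $M$ (resp.\ $\Lambda$) or $\eps$-close to the loop (resp.\ chord) traced out by a nearly-periodic Reeb trajectory; nondegeneracy of Reeb orbits and chords, condition~(R), then forces the latter to be $\eps$-close to an actual closed Reeb orbit or Reeb chord. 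Choosing $c$ larger than the maximal length of such a "transition region'' (which is finite by (R) and the energy bound, as in the proof of Lemma~\ref{lem:quant2}) yields the claimed localization on $[-R+c,R-c]$, contradicting the choice of the $f_n$.

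The main obstacle is the boundary case (b): one must ensure that the bubbling analysis, the $C^1$-bound, and the compactness of rescaled limits all go through with the Lagrangian boundary condition on $\R\x\Lambda$. Here I would use the hypothesis $\lambda|_\Lambda=\om|_\Lambda=0$ to carry out Schwarz reflection across the boundary after squaring coordinates as in the remark following Lemma~\ref{lem:mon} (reducing locally to an interior problem), and I would invoke Proposition~\ref{prop:asymptotics}(b) to identify the half-plane bubbles and the long-neck limits with trivial strips over Reeb chords. The second delicate point is making the "almost-trivial neck'' estimates quantitative and uniform in $n$: this is precisely the content of the Hofer--Wysocki--Zehnder argument and of Lemma~10.9 in~\cite{BEHWZ}, so I would cite those and indicate only the modifications needed for the boundary condition, rather than reproduce the full estimates here.
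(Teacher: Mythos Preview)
Your proposal is essentially correct and aligns with the paper's approach: the paper's own proof is simply the one-line assertion that the argument for case~(a) in~\cite{HWZ-smallarea} (contact case) and~\cite{BEHWZ} (general stable Hamiltonian structures) carries over to case~(b), and your sketch is a reasonable outline of how that carryover goes.

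One minor point worth flagging: you propose handling the boundary condition in~(b) via ``Schwarz reflection across the boundary after squaring coordinates as in the remark following Lemma~\ref{lem:mon}.'' This is unnecessary here. The strips in~(b) live in the symplectization $\R\times M$ with boundary on the \emph{smooth embedded} Lagrangian $\R\times\Lambda$; by hypothesis~(L) the clean self-intersection $K$ sits in $\inn\bar Y$, away from the cylindrical end, so it plays no role in this proposition. Hence the standard boundary regularity and bubbling analysis for smooth Lagrangian boundary conditions (as already used throughout~\cite{BEHWZ}) suffices, and the squaring device of Section~\ref{sec:local} is not needed. Apart from this overcomplication, your contradiction/bubbling/$C^1$-bound/trivial-limit scheme is exactly the Hofer--Wysocki--Zehnder strategy the paper is invoking.
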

\begin{proof}
The proof of (a) in~\cite{HWZ-smallarea} (for the contact case)
and~\cite{BEHWZ} (for general stable Hamiltonian structures) carries
over to case (b).
\end{proof}

We will also use the following version of Lemma~5.14. of \cite{BEHWZ},
whose proof carries over to the relative case using the Monotonicity
Lemma~\ref{lem:mon}.
\begin{lemma}\label{lem:longstrips}
Let $u_n:([-n,n] \x [0,1],[-n,n] \x \{1,0\}) \to (X,L)$ be a sequence
of $J$-holomorphic strips with
\begin{enumerate}[(i)]
\item $\lim_{n \to \infty} E_\om(u_n) = 0$, and
%\item the $u_n$ satisfy uniform bounds of the form \eqref{eq:thin1}, and
\item $\lim_{n \to \infty} u_n|_{\{\pm n\} \x [0,1]} = p_\pm \in L$ in
  $C^{\infty}([0,1],X)$.
\end{enumerate}
Then $\lim_{n\to \infty}\diam u_n([-n,n] \x [0,1])= 0$, and in
particular $p_+=p_-$.\qed
\end{lemma}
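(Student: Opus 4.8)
The plan is to reduce the statement to an application of the Monotonicity Lemma~\ref{lem:mon} together with the uniform control coming from $E_\om(u_n)\to 0$, mimicking the proof of Lemma~5.14 in~\cite{BEHWZ}. First I would argue by contradiction: suppose that $\diam u_n([-n,n]\times[0,1])$ does not tend to zero, so after passing to a subsequence there is $\delta_0>0$ with $\diam u_n([-n,n]\times[0,1])\geq\delta_0$ for all $n$. The idea is to locate, for each $n$, two points in the strip whose images are at distance at least $\delta_0/2$, and to use the boundary convergence hypothesis (ii) to push these points away from the ends $\{\pm n\}\times[0,1]$: for $n$ large, $u_n$ restricted to $\{\pm n\}\times[0,1]$ is $C^1$-close to the constant-ish configuration $p_\pm$, hence has small diameter, so the large-diameter behaviour must occur on an interior sub-strip $[-n+L, n-L]\times[0,1]$ for a fixed $L$ (independent of $n$).

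Next I would extract from this a definite amount of $\om$-energy concentrated near some point. Pick $z_n\in[-n+L,n-L]\times[0,1]$ with $|u_n(z_n)-p|\geq\delta_0/4$ say, where $p$ is, e.g., a limit point of $u_n$ near one end. Then the image of $u_n$ leaves the ball $B(u_n(z_n),r)$ for a fixed small $r<\eps_M$ (one can take $r=\min(\delta_0/8,\eps_M/2)$), because the diameter of the image is at least $r$ while the boundary arcs through $u_n(z_n)$ — being close to $L$ — stay within a controlled region. More carefully, the relevant point is that the preimage $u_n^{-1}(B(u_n(z_n),r))$ is compact in the strip (the strip is compact after our reduction, or one restricts to a compact subdomain containing $z_n$ on which the image stays in the ball), so Lemma~\ref{lem:mon} applies and yields $E_\om(u_n)\geq C_M r^2$. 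Since $r$ is a fixed positive constant independent of $n$, this contradicts hypothesis (i) that $E_\om(u_n)\to 0$. The equality $p_+=p_-$ then follows immediately, since once $\diam u_n([-n,n]\times[0,1])\to 0$ and $u_n|_{\{\pm n\}\times[0,1]}\to p_\pm$, the two limits must coincide.

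The main obstacle I expect is the careful bookkeeping in the geometric step: ensuring that one genuinely has a point $z_n$ in the interior sub-strip whose image is at a definite distance from the nearby boundary values, and that the corresponding small ball $B(u_n(z_n),r)$ has compact preimage so that the Monotonicity Lemma is applicable as stated. One has to use hypothesis (ii) to control the behaviour near both ends simultaneously — in particular to rule out the scenario where the large diameter is ``used up'' only by the transition regions near $\{\pm n\}\times[0,1]$ — and then a pigeonhole/connectedness argument on the image of the connected set $u_n([-n,n]\times[0,1])$ to produce the required point. None of this is deep; it is the standard ``energy concentrates, contradicting the energy bound'' mechanism, but it does require stating the reduction to an interior sub-strip precisely. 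Since the lemma is quoted from~\cite{BEHWZ} with the remark that the proof carries over using Lemma~\ref{lem:mon}, I would in fact present only this outline rather than a full argument, which is why the statement in the paper ends with \qed.
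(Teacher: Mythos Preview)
Your proposal is correct and matches the paper's approach exactly: the paper gives no proof beyond the remark that Lemma~5.14 of~\cite{BEHWZ} carries over to the relative case using the Monotonicity Lemma~\ref{lem:mon}, and your outline is precisely that carry-over argument. The one point worth making explicit in your bookkeeping is that the Monotonicity Lemma requires the Lagrangian boundary condition, so you must choose $r$ small enough that $B(u_n(z_n),r)$ avoids the images $u_n(\{\pm n\}\times[0,1])$ of the non-Lagrangian ends; this is guaranteed by hypothesis~(ii) once $z_n$ is chosen so that $u_n(z_n)$ lies at definite distance from both $p_+$ and $p_-$, which the diameter assumption and connectedness of the image provide.
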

%

%%%%%%%%%%%%%%%%%%%%%%%%%%%%%%%%%%%%%%%%%%%%%%%%%%%%%%%%%%%%%%%%%%%%%
\section{Compactness}\label{sec:compact}
%%%%%%%%%%%%%%%%%%%%%%%%%%%%%%%%%%%%%%%%%%%%%%%%%%%%%%%%%%%%%%%%%%%%%

In this section, we apply the above local theory to establish
a compactness result for holomorphic curves. We consider $(X,L,\om,J)$
satisfying conditions (X), (Y), (L), (K) and (R) from the previous
section.
Without loss of generality we assume that the neighbourhood in
condition (K) is $\UU=\UU_\eps$, where $\UU_r$ denotes the open
$r$-neighbourhood of $K$ in $X$ with respect to the distance induced
by $(\om,J)$.

\subsection{$C^\infty_\loc$-convergence}
%%%%%%%%%%%%%%%%%%%%%%%%%%%%%%%%%%%%%%%%%%%%%%%%%%%%%%%%%%%%%%%%%%%%%

Consider a fixed connected Riemann surface $(\Sigma,j)$ of finite
type, by which we mean the complement of a finite set of points (the
``punctures'') in a connected compact Riemann surface with
boundary. We assume that $\Sigma$ is stable, meaning that its double
is a stable punctured Riemann surface in the usual sense. It follows
that $\Sigma$ admits a unique complete hyperbolic metric $h_j$
compatible with $j$ such that each component of $\p \Sigma$ is a
geodesic (either closed or infinite).

Suppose $f_n:(\Sigma,\p \Sigma, j), \to (Y,L,J)$ is a sequence of
continuous maps which are $J$-holomorphic on $\inn \Sigma$ and have
finite energy. Recall (Proposition~\ref{prop:asymptotics}) that
if an interior puncture is a non-removable
singularity of the map $f_n$, then $f_n$ will be asymptotic to a
trivial cylinder over a closed Reeb orbit in a neighborhood of that
puncture. Similarly, near non-removable boundary punctures $f_n$ is
asymptotic to a trivial strip over a Reeb chord. We make
the following additional assumptions on our sequence:
\begin{enumerate}[(S1)]
\item Each puncture is either removable for all $n \geq 1$ or
  non-removable for all $n \geq 1$, and at non-removable punctures the
  asymptotic Reeb chords resp.~closed Reeb orbits are independent of
  $n$.
\item There exists a constant $C>1$ such that for all
  $z\in \inn \Sigma$ and all $n\geq 1$ we have
\begin{gather}
| \nabla f_n(z)| \leq \frac{C}{\rho(z)} \qquad \text{\rm if
}f_n(z)\notin\UU_{\eps/4}, \label{eq:bounds1}\\
|\nabla (\tau\circ f_n)(z)| \leq \frac{C}{\rho(z)} \qquad
\text{\rm if }f_n(z)\in\UU_\eps.  \label{eq:bounds2}
\end{gather}
Here $\rho(z)$ denotes the injectivity radius at $z\in \Sigma$ in the
hyperbolic metric, the norm is computed with respect to the
hyperbolic metric on the domain and the metric determined by $\om$ and
$J$ on the target, and $\tau:\UU_\eps\to T^*K$ is the holomorphic
projection appearing in condition (K).
%\item There exists some compact subset $B\subset Q$ such that for the
%  natural projection $p:T^*Q \to Q$ the images $p \circ
%  f_n(\sigma)$ are contained in $B$.
\end{enumerate}

\begin{prop}\label{prop:convergence}
For any sequence of maps $f_n:(\Sigma,\p \Sigma,j) \to (Y,L,J)$
satisfying (S1)-(S2) there exists a subsequence, still denoted $f_n$, and
a map $f:(\Sigma,\p \Sigma,j) \to (Y,L,J)$ such that
\begin{itemize}
\item  $f$ is continuous on $\Sigma$ and holomorphic on $\inn
\Sigma \cup (\p \Sigma \setminus f^{-1}(K))$,
\item $f_n \to f$ in $C^0_\loc$ on $\Sigma$, and
\item $f_n \to f$ in $C^\infty_\loc$ on $\inn \Sigma \cup (\p \Sigma
  \setminus f^{-1}(K))$.
\end{itemize}
\end{prop}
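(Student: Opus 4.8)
\textbf{Proof proposal for Proposition~\ref{prop:convergence}.}

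The plan is to combine a standard elliptic bootstrapping argument away from $K$ with the one-dimensional local theory of Section~\ref{sec:local} near $K$. First I would set up the convergence away from the self-intersection. Fix a compact subset $S'\subset\inn\Sigma\cup(\p\Sigma\setminus f^{-1}(K))$ — but of course $f$ is not yet defined, so the argument has to be run simultaneously with the construction of $f$. More precisely, on any compact subset $S'$ of $\Sigma$ on which the $f_n$ eventually stay outside $\UU_{\eps/4}$, the gradient bound~\eqref{eq:bounds1} together with the injectivity radius function $\rho$ being bounded below on $S'$ gives a uniform $C^1$-bound; elliptic regularity for the $J$-holomorphic equation (with the appropriate boundary condition $f_n(\p\Sigma)\subset L$, using that away from $K$ the boundary condition is a smooth totally real submanifold) then upgrades this to uniform $C^\infty_\loc$-bounds, and Arzel\`a--Ascoli plus a diagonal argument extracts a subsequence converging in $C^\infty_\loc$ on the open set $\{z : f_n(z)\notin\overline{\UU_{\eps/4}}\text{ for large }n\}$. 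On the cylindrical/strip ends near non-removable punctures, assumption (S1) pins down the asymptotic Reeb orbit/chord independently of $n$, and combined with the exponential convergence from Proposition~\ref{prop:asymptotics} and the gradient bound one gets $C^\infty_\loc$-convergence up to and including the punctured neighbourhoods.

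The core new input is the behaviour near $K$. Here I would use condition (K): on $\UU=\UU_\eps$ the structure $J$ is integrable, and near a point of $K$ there are holomorphic coordinates $\C^n=\R^k\oplus\R^{n-k}\oplus i\R^k\oplus i\R^{n-k}$ in which $L_0=\R^n$ and $L_1=\R^k\oplus i\R^{n-k}$. Write $f_n=(f_n^1,\dots,f_n^n)$ in these coordinates on the open set where $f_n\in\UU_\eps$; then for each coordinate index $\ell$ the component $f_n^\ell$ is a holomorphic function mapping the relevant portion of $\p\Sigma$ either into $\R$ (for $\ell\le k$, and for the $L_0$-branch when $\ell>k$) or into $i\R$ (for the $L_1$-branch when $\ell>k$) — in other words each $f_n^\ell$ satisfies conditions (F1)--(F3) of Section~\ref{sec:local} on the appropriate subsurface, after possibly a rotation by $i$. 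The bound~\eqref{eq:bounds2} on $\tau\circ f_n$, together with the fact that the fibre directions are controlled once one is inside $\UU_\eps$ by the integrated gradient bound (the diameter of the $f_n$-image in a fixed small ball is controlled), gives a uniform $C^0$-bound $|f_n^\ell|\le C$ on a slightly smaller coordinate patch. Lemma~\ref{lem:3} then applies componentwise: after passing to a further subsequence, $f_n^\ell\to f^\ell$ in $C^0_\loc$ on the whole patch and in $C^\infty_\loc$ away from $(f^\ell)^{-1}(0)$, with the limit $f^\ell$ again satisfying (F1)--(F3). Reassembling the components and patching over a finite cover of $K$ by such coordinate charts (using that $K$ is compact) produces a limit map $f$ near $K$ which is continuous, holomorphic on $\inn\Sigma$, and holomorphic on $\p\Sigma$ away from $f^{-1}(K)$; note $f^{-1}(K)$ is precisely the locus where all the fibre-direction components vanish, equivalently where the relevant $f^\ell$ vanish simultaneously, so the regularity claim matches the statement.

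The two local pictures then have to be glued. On the overlap — the region where $f_n\in\UU_\eps$ but $f_n\notin\UU_{\eps/4}$ — both bootstrapping arguments apply and produce the same subsequential limit (uniqueness of $C^0_\loc$-limits), so a single diagonal subsequence works globally, and the three bulleted conclusions follow: continuity of $f$ on all of $\Sigma$ from $C^0_\loc$-convergence, holomorphicity on $\inn\Sigma\cup(\p\Sigma\setminus f^{-1}(K))$ from the local statements, $C^0_\loc$-convergence globally, and $C^\infty_\loc$-convergence on $\inn\Sigma\cup(\p\Sigma\setminus f^{-1}(K))$ from the two local $C^\infty$ statements. The main obstacle I anticipate is the bookkeeping at the interface between ``inside $\UU_\eps$'' and ``outside $\UU_{\eps/4}$'': one must check that the $C^0$-bound needed to invoke Lemma~\ref{lem:3} really is available (the hypotheses (S1)--(S2) bound gradients and the $\tau$-component but not a priori the full map, so one needs the monotonicity-type control of the image diameter from a fixed boundary value, i.e.\ an argument in the spirit of Lemma~\ref{lem:longstrips} / the Monotonicity Lemma~\ref{lem:mon}, to conclude that once the image enters $\UU_\eps$ it cannot wander too far in the fibre directions), and that the coordinate changes between the finitely many charts of condition (K) are compatible with the (F1)--(F3) framework. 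Everything else is routine elliptic analysis and diagonal extraction.
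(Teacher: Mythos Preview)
Your approach is the same as the paper's and correct in outline. Two points where the paper's execution is sharper than your sketch:

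First, the $C^0$-bound you worry about is immediate and does not need monotonicity. The paper covers each compact piece of $\Sigma$ by balls $S_{z_i}$ of radius $\eps/(4C_j)$ and runs a dichotomy on the center: either $f_n(z_i)\notin\UU_{\eps/2}$ along a subsequence, in which case integrating~\eqref{eq:bounds1} keeps $f_n(S_{z_i})$ outside $\UU_{\eps/4}$ and the boundary condition is smooth; or eventually $f_n(z_i)\in\UU_{\eps/2}$, in which case integrating~\eqref{eq:bounds1} along a geodesic (applied on the portion where $f_n$ has already left $\UU_{\eps/2}$) forces $f_n(S_{z_i})\subset\UU_\eps$. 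Since $\UU_\eps$ is a bounded neighbourhood of the compact $K$, the $C^0$-bound on the fibre components is then automatic. There is no gradient bound on the fibre directions inside $\UU_{\eps/4}$ --- that is precisely why Lemma~\ref{lem:3} is needed --- so your phrase ``the fibre directions are controlled \dots\ by the integrated gradient bound'' is not right, though the conclusion is.

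Second, and more substantively, the paper does not treat all $n$ coordinate functions on the same footing. It first extracts a $C^\infty_\loc$-convergent subsequence of $\tau\circ f_n:S_{z_i}\to T^*K$, using the gradient bound~\eqref{eq:bounds2} and the \emph{smooth} Lagrangian boundary condition on $K\subset T^*K$; this is standard compactness, no squaring trick. Only \emph{after} obtaining the limit $g=\lim\tau\circ f_n$ does it cover $g(S_{z_i})$ by finitely many chart neighbourhoods $U_{x_m}$ from condition~(K), pull these back to subsets $S_{z_i,m}:=g^{-1}(U_{x_m})$, and apply Lemma~\ref{lem:3} to the $(n-k)$ fibre projections $\nu_\ell\circ f_n:S_{z_i,m}\to\C$. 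This two-step order matters: the local $\C^n$-coordinates of~(K) exist only near individual points of $K$, so without first stabilising the $K^\C$-direction you cannot fix a domain on which the fibre components of the various $f_n$ live in a common chart.
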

\begin{proof}
We fix an exhaustion $B_1 \subset B_2 \subset B_3 \subset \dots $ of
$\Sigma$ by closed subsets $B_j:=\{z\in \Sigma\colon d(z_0,z)\le j\}$,
where $d(z_0,z)$ denotes the distance between some fixed point $z_0$
and $z$. Since $B_j$ is compact, the injectivity radius 
$\rho$ is bounded below on it by 
$$
   \rho_j := \min_{B_j}\rho > 0.
$$
So with $C_j:=C/\rho_j$ condition (S2) yields the following gradient
bounds for $z \in B_j \cap \inn \Sigma$ and all $n$:
\begin{gather}
|\nabla f_n(z)| \leq C_j \qquad \text{\rm if
}f_n(z)\notin\UU_{\eps/4}, \\
| \nabla (\tau\circ f_n)(z)| \leq C_j \qquad
\text{\rm if }f_n(z)\in\UU_\eps.  \label{eq:bounds4}
\end{gather}
We now distinguish two cases.

Case 1: The sequence $f_n(z_0)$ is unbounded. \\
Then, after passing to a subsequence, we have $f_n(z_0)\in\R_+\times M$
with $\R$-component going to infinity. By the gradient bounds on
$B_j$, for each fixed $j$ we have $f_n(B_j)\subset\R_+\times M$ for
all sufficiently large $n$ with
$\R$-component going uniformly to infinity. Hence we
can apply the usual compactness argument with smooth Lagrangian boundary
conditions $\R \x \Lambda$ in the symplectization $\R \x M$.

Case 2: The sequence $f_n(z_0)$ remains in a compact subset $A\subset
X$. \\
By the gradient bounds on $B_j$, for each fixed $j$ the images
$f_n(B_j)$ remain in the compact subset
$$
   A_j := \{x\in X\mid d(x,A\cup\bar\UU_\eps)\leq C_j\,j\}.
$$
%After
%passing to a further subsequence we may assume that $f_n(z_0)$
%converges to some $x \in T^*Q$.
%A given ball $B_j$ in our exhaustion will be mapped to the ball of
%radius $Cj$ around $x\in T^*Q$ by each $f_n$. In other words, $f_n|_{B_j}:B_j \to
%B(x,Cj) \subset T^*Q$ is a sequence of $C$-Lipschitz maps between
%compact metric spaces, and so some subsequence has a $C^0$ limit
%$f:(B_j,B_j \cap \p \Sigma) \to (T^*Q,L)$.
%
%To prove holomorphicity of the limit, we argue as follows.
%
%Denote by $\UU_\eps$ the neighborhood of $K$ which is holomorphically
%parametrized according to assumption (A), and for each
%$0<r<\eps$ let $\UU_r$ be the corresponding $r$-neighborhood of $K$.
%By making $\eps>0$ smaller if necessary, we can assume in addition
%that every point $x\in K\subset \UU_\eps$ has a neighborhood
%biholomorphic to $B^{2d}(0,\eps) \x D^2_\eps \x \dots \x D^2_\eps$,
%where $d=\dim K$ and the first ball corresponds to the components in
%$\nu_\eps(K)$.
For each $z\in B_j$, we define the open ball
$$
   S_z := \inn B(z,\frac{\eps}{4C_j}).
$$
Now $B_j$ is covered by a finite collection $S_{z_1},\dots,S_{z_r}$
of these sets.
%, and we can arrange in addition that $f(x_i)\in K$ if
%$f(U_{x_i}) \cap K \neq \varnothing$.

For each of the points $z_i$, exactly one of the following two things
happens:
\begin{enumerate}[(a)]
\item after passing to a subsequence $n_k$, $f_{n_k}(z_i) \notin
  \UU_{\eps/2}$ for all $k \geq 1$, or
\item there exists some $N(z_i)$ such that $f_n(z_i)\in \UU_{\eps/2}$
  for all $n \geq N(z_i)$.
\end{enumerate}
If $z_1$ is of type (a), then we pass to the subsequence $f_{n_k}$,
and if $z_1$ is of type (b) we pass to the subsequence $f_n$ with $n
\geq N(z_1)$. Repeating this for each index $i=2,\dots,r$, we arrive
at the situation where for each $z_i$ either (a) or (b) holds for all
$n \geq 1$.

Consider first $z_i$ of type (a). Then the gradient bounds imply that
$f_n(S_{z_i}) \cap \UU_{\eps/4} = \varnothing$ for all $n\geq 1$.
So the maps $f_n:S_{z_i}\to X$ have smooth Lagrangian boundary
conditions, and the usual compactness argument yields a subsequence
which converges in $C^\infty_\loc$ (up to the boundary) to a
holomorphic limit map.

Next we consider $z_i$ of type (b). We claim that
$f_n(S_{z_i}) \subset \UU_\eps$ for all $n \geq 1$. To see this,
consider $z\in S_{z_i}$ and a constant speed minimal geodesic
$\gamma:[0,1]\to \Sigma$ from $z_i$ to $z$, set
$$
t':= \sup \{t \in [0,1] \,:\, d(f_n(\gamma(t)),K)\leq \frac \eps 2 \},
$$
and compute
\begin{align*}
d(f_n(z),K)
&\leq \frac \eps 2 + d(f_n(\gamma(t')),f_n(y)) \\
&\leq \frac \eps 2 + \int_{t'}^1 |\nabla
f_n(\gamma(t))|\,|\dot\gamma(t)| dt\\
&\leq \frac \eps 2 + d(z_i,z) \cdot
\max_{t\in [t',1]} |\nabla f_n(\gamma(t))| \\
&\leq \frac \eps 2 + \frac {\eps}{4C_j} \cdot C_j\\
&\leq \eps.
\end{align*}
This proves the claim. Now consider the holomorphic maps
$$
   \tau\circ f_n:S_{z_i}\to T^*K,
$$
where $\tau:\UU_\eps\to T^*K$ is the holomorphic projection in
condition (K). These maps have smooth Lagrangian boundary conditions
on $K$, are uniformly boun\-ded, and have uniform gradient
bounds by condition \eqref{eq:bounds4} above. So the usual compactness argument
yields a convergent subsequence. Denote the limit map by $g:S_{z_i}\to
T^*K$.

It remains to show convergence of the components transverse to
$T^*K$. For this, note that by condition (K) each $x\in T^*K$ has a
neighbourhood $U_x$ with a holomorphic embedding $\tau^{-1}(U_x)\into
U_x\times\C^{n-k}$ sending the branches of $L$ to $K\times\R^{n-k}$
and $K\times i\R^{n-k}$, where $k=\dim K$. Cover the image of $\tau$
by finitely many such neighbourhoods $U_{x_1},\dots,U_{x_s}$ and denote by
$\nu_\ell:U_x\times\C^{n-k}\to\C$ the holomorphic projection onto the
$\ell$-th $\C$-factor, $\ell=1,\dots,n-k$.
Pulling back the $U_{x_m}$ under $g$
yields open subsets $S_{z_i,m}\subset S_{z_i}$ and holomorphic
functions $\nu_\ell\circ f_n:S_{z_i,m}\to\C$ mapping the boundary
$S_{z_i,m}\cap \p\Sigma$ to $\R\cup i\R$. Since the functions are also
uniformly bounded, Lemma~\ref{lem:3} yields a convergent subsequence
for each $\ell=1,\dots,n-k$ and $m=1,\dots,s$.

Combining types (a) and (b), we conclude that $f_n:B_j\to X$ has a
subsequence converging in the desired sense to a continuous limit map
$f_j:(B_j,B_j\cap \p \Sigma) \to (X,L)$ which is
holomorphic on $B_j \cap (\inn \Sigma \cup (\p \Sigma \cap
f_j^{-1}(K)))$.
Finally, we take a diagonal sequence with respect to the index $j$ of
$B_j$ in our exhaustion to get a subsequence converging on all of
$\Sigma$ to a limit map $f:(\Sigma,\p\Sigma)\to (X,L)$, with
convergence in $C^0_\loc$ on $\Sigma$ and in
$C^\infty_\loc$ on $\inn \Sigma \cup (\p \Sigma \cap f^{-1}(K))$.
\end{proof}

\begin{remark}
Note that the same proof works if we allow the domains of $f_n$ to
vary in a converging sequence of Riemann surfaces $(\Sigma_n,j_n)$.
\end{remark}

\subsection{Proof of the Compactness Theorem~\ref{thm:comp}}
%%%%%%%%%%%%%%%%%%%%%%%%%%%%%%%%%%%%%%%%%%%%%%%%%%%%%%%%%%%%%%%%%%%%%

For the remainder of this section, we assume familiarity with the
proof of compactness for holomorphic curves in SFT presented in
\cite{BEHWZ}, and we will sketch how it can be adapted to our setting.
We freely use the concepts and notation of \cite{BEHWZ}.

We denote a {\em nodal Riemann surface} by $(S,j,D,M)$, where $(S,j)$
is a compact Riemann surface, $D$ is the set of double points, and $M$
is the set of marked points.
As our curves have boundary, a nodal Riemann surface will
have nodes of two types: boundary nodes, where both points are on the
boundary, and interior nodal points, where both are in the
interior. We do not consider mixed nodes. Also, we think of the
boundary components as ordered, and so the set of marked points $M$
can be split as $M=M_\inn \cup M_1 \cup \dots \cup M_b$, where $b \geq
0$ is the number of boundary components of the surface $S$, and where
the marked points in $M_\inn$ are interior and the marked points in
$M_i$ lie on the $i^{th}$ boundary component. The genus of a nodal
Riemann surface with boundary is the arithmetic genus of the
topological surface obtained by filling each boundary component by a
disc.

We define the {\em signature} of a nodal Riemann surface as the
sequence $\sigma= (g,b;n,m_1,\dots,m_b)$, where $g$ is its genus, $b$
is the number of boundary components, $n$ is the number of interior
marked points, and $m_i$ is the number of marked points on the $i^{th}$
boundary component.

The $\eps$-thin part of every component of a stable nodal Riemann surface
(with respect to its uniformizing metric) now consists of four types
of domains: annuli of finite modulus around a short interior geodesic,
annuli conformally equivalent to the punctured unit disc around each
interior puncture, a rectangular region conformally equivalent to
$[-1,1] \x (-L,L)$ around each short geodesic (minimal in its free
homotopy class) connecting two boundary components, and a region
conformally equivalent to a punctured half-disc $D^+ \setminus \{0\}$
near each boundary puncture.

A {\em decoration} (i.e. an orientation reversing orthogonal
identification of the tangent planes at the two corresponding points)
is required only at interior nodes, since at the boundary the choice of
identification is fixed by matching the boundary directions. we denote
a decorated nodal Riemann surface by $(S,j,D,M,r)$, where $r$ stands
for the decoration.
%we can then
%form the quotient $\hat{S}_D:=S/\{\underline{d_i} \sim
%\overline{d_i}\, |\, i=1,\dots,k\}$ and the deformation $S^{D,r} \to
%\hat{S}_D$ as described in \cite[\S 4.4]{BEHWZ}. Note that $S^{D,r}$ has
%special circles corresponding to interior nodes and special segments
%corresponding to boundary nodes.

We denote by $\overline{\MM}^\$_\sigma$ the moduli space
of connected decorated stable nodal Riemann surfaces with signature
$\sigma$, equipped with the usual topology (\cite{BEHWZ}, cf.~also
\cite{Liu} for the non-decorated case). It is shown in~\cite{BEHWZ}
that, for each fixed signature $\sigma$, the space
$\overline{\MM}^\$_\sigma$ is a compact metric space which coincides
with the closure of its subset $\MM_\sigma$ of smooth marked Riemann
surfaces with boundary of signature $\sigma$. In other words, every
sequence of smooth stable marked Riemann surfaces
$(S_n,j_n,M_n)$ of signature $\sigma$ has a subsequence which
converges to a decorated nodal Riemann surface $(S,j,M,D,r)$ of the
same signature.

%Now consider the symplectic manifold $W=T^*Q$ with the immersed
%Lagrangian submanifold $L=Q \cup N$, where $N \subset T^*Q$ is the
%conormal bundle of some closed submanifold $K \subset Q$. We also fix
%an admissible almost complex structure $J$.
Now consider $(X,L,\om,J)$ as above.
With the above setup for the domains, the definition of a nodal holomorphic
curve of height $1$ in $(X,L,J)$ is exactly the same as in
\cite[\S 8]{BEHWZ}, except that we allow the domain to have boundary, which
is required to be mapped to $L$. Similarly, we get the notion of a
holomorphic building of height $(1|k_+)$, and the notion of
convergence. Fixing the signature $\sigma:=(g,b;n,m_1,\dots,m_b)$, we
obtain the moduli space $\overline{\MM}_\sigma(W,L,J)$ of stable
holomorphic curves of that signature.

Now we can prove the Compactness Theorem~\ref{thm:comp} in the
introduction, which we restate as follows.

\begin{thm}\label{thm:compactness}
Let $(X,L,\om,J)$ satisfy conditions (X), (Y), (L), (K) and (R).
Then for any $E>0$ and for any fixed signature
$\sigma=(g,b;n,m_1,\dots,m_b)$, the space $\overline{\MM}_\sigma(X,L,J) \cap
\{E(f) \leq E\}$ is compact.
\end{thm}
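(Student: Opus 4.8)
The plan is to follow the strategy of \cite{BEHWZ}, using the results of the local theory (Sections~\ref{sec:local}--\ref{sec:global}) to handle the new phenomenon, namely boundary points mapping to the self-intersection locus $K$. Given a sequence $f_n\in\overline{\MM}_\sigma(X,L,J)$ with $E(f_n)\le E$, the first step is to pass to a subsequence so that the underlying marked nodal domains $(S_n,j_n,M_n)$ converge (as decorated nodal Riemann surfaces of signature $\sigma$) to some $(S,j,M,D,r)$; this is the purely domain-level compactness recalled just above the theorem. It then suffices to establish convergence of the maps $f_n$ after suitable reparametrization, together with the bubbling-off analysis at the thin parts. As in \cite{BEHWZ}, the core of the argument is a \emph{gradient-bound} dichotomy: either $|\nabla f_n|$ is uniformly bounded on compact subsets of the thick part (in the hyperbolic metric), giving $C^\infty_\loc$-convergence, or it blows up, in which case one rescales to extract a non-constant holomorphic plane, half-plane, sphere or disk carrying at least $\hbar$ of $\om$-energy by Lemma~\ref{lem:quant}, which by the uniform bound $E(f_n)\le E$ can happen only finitely many times. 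Removal of singularities (Lemma~\ref{lem:remsing}) then extends the limit across the bubble points.

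The essential new point, and the one where the local theory enters, is that the standard gradient bound \emph{fails} near $K$: a holomorphic curve whose boundary touches the self-intersection locus has, in the model coordinates of condition (K), components behaving like $\sqrt{z}$, whose derivative is unbounded near the switching point. This is exactly why the hypotheses (S1)--(S2) of Proposition~\ref{prop:convergence} are stated with the modified bound \eqref{eq:bounds2} involving the \emph{holomorphic projection} $\tau$ rather than $f_n$ itself. So the plan is: on the part of the domain where $f_n$ stays outside $\UU_{\eps/4}$ one uses the usual gradient estimate and the usual rescaling; on the part where $f_n$ enters $\UU_\eps$, one instead obtains a gradient bound for $\tau\circ f_n$ (which has \emph{smooth} totally real boundary condition on $K^\C$), and controls the fibre directions $\nu_\ell\circ f_n$ using Lemma~\ref{lem:3}, which produces a $C^0_\loc$-limit that is $C^\infty_\loc$ away from $f^{-1}(K)\cap\p\Sigma$. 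The key quantitative input is that a gradient blow-up at a point mapping near $K$ still forces a definite amount of $\om$-energy to concentrate: this again follows from the Monotonicity Lemma~\ref{lem:mon}, and from Lemma~\ref{lem:quant}, so the number of such points is again finite. One must also verify that such bubbling points cannot accumulate on $\p\Sigma$ and that the limit at such a point is again of the form covered by the local theory — i.e.\ a bounded function satisfying (F1)--(F3) in the model coordinates — so that Lemma~\ref{lem:remsing}(b) and Lemma~\ref{lem:1} give a continuous extension holomorphic off the finitely many switching points.

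Next I would treat the thin parts. Around short interior geodesics and interior punctures, and around short geodesics connecting two boundary components or boundary punctures, one runs the neck-stretching analysis of \cite{BEHWZ}: using Lemma~\ref{lem:quant2}, Proposition~\ref{prop:smallarea} and Proposition~\ref{prop:asymptotics}, together with Lemma~\ref{lem:longstrips} for the long-strip case, one shows that on each long neck either $f_n$ converges (after shift) to a trivial cylinder/strip over a Reeb orbit/chord — producing a new sublevel of the holomorphic building — or its image has small diameter so the neck disappears into a node of the limit. Here one has to be careful that a long \emph{strip} region $[-R,R]\times[0,1]$ with boundary on $L$ could in principle contain switches; but by condition (R) no closed Reeb orbit meets $\Lambda$ and the asymptotic chords have both endpoints on $\Lambda$, which lies in the cylindrical end away from $K$, so for $R$ large the relevant part of the strip maps into the symplectization end, $L$ there is the \emph{embedded} Lagrangian $\R_+\times\Lambda$, and the standard analysis applies verbatim. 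The matching of asymptotic data at the new nodes is forced by (S1) and the uniqueness of asymptotic limits in Proposition~\ref{prop:asymptotics}. Finally one checks stability of the limit building and that its signature is still $\sigma$ (genus, number of boundary components and marked-point distribution are preserved under the domain degeneration), and that $E(f)\le E$ by lower semicontinuity of energy; hence the limit lies in $\overline{\MM}_\sigma(X,L,J)\cap\{E(f)\le E\}$, proving sequential compactness, and metrizability of the target space upgrades this to compactness.

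I expect the main obstacle to be the analysis near $K$: making rigorous that a gradient blow-up of $f_n$ at a boundary point whose image approaches $K$ still concentrates a uniformly positive amount of $\om$-energy (so that only finitely many such points occur), and that the rescaled limit is genuinely of the local model type (F1)--(F3) rather than something wilder, so that Lemmas~\ref{lem:1}, \ref{lem:remsing} and the winding-number estimates of Lemma~\ref{lem:wind} apply. Equivalently: one needs a uniform separation, coming from the Monotonicity Lemma in the form used in the Remark following Lemma~\ref{lem:mon}, that lets the $\sqrt{\;}$-trick $g_n=(f_n^1)^2,\dots$ convert the switching boundary condition into a smooth one at the cost of a controlled loss of derivatives; this is precisely what the self-contained local theory of Section~\ref{sec:local} was built to supply, so the remaining work is bookkeeping — ensuring the finitely many interior bubble points, boundary bubble points, switching points and nodes are disjoint and stable — rather than new analysis.
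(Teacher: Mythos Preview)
Your proposal is essentially the same strategy as the paper's proof and is largely correct. One point needs correction, though: your treatment of switches in the thin part is not quite right. You argue that a long strip $[-R,R]\times[0,1]$ can be assumed to map into the symplectization end (where $L=\R_+\times\Lambda$ is embedded) because the asymptotic Reeb chords live there. But that reasoning covers only the sub-case where the adjacent thick-part asymptotics are Reeb chords. When a boundary neck collapses to a \emph{node} mapping to a point of $L$---possibly a point of $K$---the long strips $u_n$ stay in a compact region of $X$ and may well carry arbitrarily many switches; your argument does not exclude this, and indeed one cannot. The paper does not try to avoid switches on thin strips; instead it observes that the modified gradient bounds \eqref{eq:bounds1}--\eqref{eq:bounds2} established in Step~1 persist on the thin part (this is the content of \eqref{eq:thin1}), so that the same $C^0_\loc$-convergence mechanism from Proposition~\ref{prop:convergence} applies there, and then Lemma~\ref{lem:longstrips} (whose proof only needs the Monotonicity Lemma~\ref{lem:mon}, valid for immersed $L$) handles the zero-area point-asymptotic case regardless of how many switches occur along the way. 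In the positive-area case the strip may break into several levels, and again the bottom-level piece lives in $X$ and may contain switches; convergence to it is governed by the modified bounds, not by any claim that it sits in the end.

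A second, smaller remark: in your ``main obstacle'' paragraph you speak of a gradient blow-up of $f_n$ near $K$ forcing energy concentration. Be careful here---$|\nabla f_n|$ itself \emph{does} blow up near a genuine switch (the $\sqrt{z}$ model) without any energy concentration; what must not blow up is $|\nabla(\tau\circ f_n)|$. You said this correctly earlier when discussing \eqref{eq:bounds2}, but the later phrasing could mislead. The paper's Step~1 makes this precise: assume \eqref{eq:bounds1} already holds and suppose \eqref{eq:bounds2} fails, rescale so that the $\tau$-gradient is bounded, and then apply Proposition~\ref{prop:convergence} (not standard elliptic bootstrapping) to extract a nonconstant plane or half-plane, which then carries area $\ge\hbar$ by Lemma~\ref{lem:quant}.
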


\begin{proof}
The proof closely follows the strategy of the corresponding proof of
Theorem~10.2. of \cite{BEHWZ}. Clearly, it is sufficient to establish
sequential compactness for smooth curves (i.e. without nodes).

So let $f_n:(S_n,\p S_n,j_n) \to (X,L,J)$ be a sequence of curves
of fixed signature and uniformly bounded energy.

{\bf Step 1:} After adding additional marked points if needed, we
may assume that the underlying domains $(S_n,j_n,M_n\cup Z_n)$ of the
$f_n$ are stable.
%A subsequence of these domains will then converge to
%a stable domain $(S,j,M \cup Z,D)$.

Now we want to argue that, by adding a finite set (with
number depending on the energy bound) of additional pairs of points,
one obtains a new sequence of stable domains, denoted by $(S_n,j_n,M_n
\cup Z_n)$, such that the new sequence satisfies the gradient
bounds \eqref{eq:bounds1} and \eqref{eq:bounds2}.

This is based on a bubbling analysis.
Indeed, to achieve \eqref{eq:bounds1} one argues as in \cite{BEHWZ},
producing finite energy planes or spheres that each take a minimal
amount $\hbar>0$ of energy by Lemma~\ref{lem:quant}.

So assume that \eqref{eq:bounds1} holds but \eqref{eq:bounds2} fails,
i.e. there exists a sequence of points $z_n \in S_n$ such that
$f_n(z_n) \in \UU_{\eps/4}$ and $\|\nabla (\tau \circ f_n)(z_n)\| \cdot
\rho(z_n) \to \infty$. After passing to a subsequence, we have one of
the following two cases:
\begin{enumerate}[(i)]
\item $\rho'_n := \frac {\rho(z_n)}{d(z_n,\p S_n)} \leq C < \infty$,
  or
\item $\rho'_n \to \infty$.
\end{enumerate}
In case (i), we find holomorphic embeddings $\phi_n:(D,0) \to
\bigl(S_n\setminus (M_n \cup Z_n),z_n\bigr)$ of the unit disk with
$$
\frac 1 {C'} \rho'_n \leq |\nabla \phi_n| \leq C' \rho'_n
$$
for some constant $C'$, and in case (ii) we find points $\xi_n \in
D^+$ with $\xi_n \to 0$ and holomorphic embeddings
$\phi_n:(D^+,D^+\cap\R,\xi_n) \to \bigl(S_n\setminus (M_n \cup
Z_n),L,z_n\bigr)$ of the upper half disk satisfying the same bounds.

In both cases we can modify the sequence $(z_n)$, rescale
$f_n\circ\phi_n$ as in \cite[\S 10.2.1]{BEHWZ} and apply
Proposition~\ref{prop:convergence} to obtain a $J$-holomorphic plane
$f:\C\to X$ or half-plane $(\H,\R)\to (X,L)$ of finite energy. The map
$f$ is either proper, or it extends to a holomorphic sphere or disk by
Lemma~\ref{lem:remsing}. In either case, $f$ has area
$E_\om(f)\geq \hbar>0$ by Lemma~\ref{lem:quant}. Hence adding a pair
of marked points and repeating this process, we obtain a bound of the
form \eqref{eq:bounds2} after finitely many steps.

{\bf Step 2:}
% As in \cite[\S 10.2.2.]{BEHWZ} it follows from Theorem~\ref{thm:DM}
% that,
After passing to a subsequence, the domains
$(S_n,M_n,Z_n)$ will converge to a decorated nodal
Riemann surface with boundary $(S,j,M,Z,D,r)$. In Step 1 we have
arranged for assumption (S2) to hold for our sequence, and using the
energy bound we can arrange (S1) after passing to a subsequence. So,
by Proposition~\ref{prop:convergence}, for a further subsequence we
obtain $C^\infty_\loc$-convergence on each component of the complement
of the pinching geodesics in $(S_n,j_n,M_n,Z_n)$ of the
maps $f_n$ to some limiting map $f$ defined on the corresponding
components of $(S,j,M,Z,D,r)$.

{\bf Step 3:} Now we have to analyse the convergence in the thin
part. Here, as in \cite{BEHWZ}, one considers each type of
component of the thin part seperately. Annuli near interior marked
points and near interior nodes are treated in detail in \cite[\S
10.2.3]{BEHWZ}. In the other two cases one proceeds analogously, with
the following adaptions.

{\bf Behavior near a boundary node.}
As in the case of interior nodes described in \cite{BEHWZ}, boundary
nodes appear as a result of degeneration of some component of the thin
part of the $S_n$. The associated holomorphic strips $u_n = f_n \circ
\phi_n$, obtained by precomposing with suitable uniformizations
$\phi_n$ whose domains are longer and longer strips, have gradient
bounds of the form
\begin{equation}\label{eq:thin1}
\begin{array}{rclc}
|\nabla u_n(z)| &\leq &C& \text{\rm if }u_n(z)\notin\UU_{\eps/4}, \\
|\nabla (\tau\circ u_n)(z)| &\leq& C & \text{\rm if
}u_n(z)\in\UU_\eps.
\end{array}
\end{equation}
This follows by the same argument as that for equation (35)
in~\cite{BEHWZ}. After passing to a subsequence, the areas
$E_\om(f_n)$ converge to either zero or some positive constant.

First consider the case of zero limiting $E_\om$-energy. If one of the
asymptotics for the limit map $f$ in the adjacent thick parts of $S$
is a Reeb chord, one uses part (b) of Proposition~\ref{prop:smallarea} to
conclude that the other asymptotic equals the same Reeb chord.
If both adjacent asymptotics are points $p_\pm\in L$ one
uses Lemma~\ref{lem:longstrips} to conclude that $p_+=p_-$.

If the limiting $E_\om$-energy of the strips is
positive, in view of the gradient bounds \eqref{eq:thin1} there can be no
bubbling, and so the only possibility is breaking into a sequence
of holomorphic strips. By Lemma~\ref{lem:quant2}, each nontrivial
strip carries area at least $\hbar(E)$, so there can only be finitely
many of them.

{\bf Behavior near a boundary puncture.} Here, the adjustments are
similar in nature to the ones described for the previous case, and we
omit the details.

After Step 3 is done, we have a subsequence $f_n$ of the original
sequence of holomorphic curves converging to a limiting map $f$
defined on some nodal Riemann surface $(S,j,M,Z,D,r)$ such that $\lim
E(f_n)=E(f)$.

{\bf Step 4:} It remains to recover the level structure in the
holomorphic building $f$ constructed above, and this is done exactly
as in \cite[\S~10.2.5]{BEHWZ}.
\end{proof}

%%%%%%%%%%%%%%%%%%%%%%%%%%%%%%%%%%%%%%%%%%%%%%%%%%%%%%%%%%%%%%%%%%%%%
\section{Proof of the Finiteness Theorem~\ref{thm:fin}}\label{sec:finite}
%%%%%%%%%%%%%%%%%%%%%%%%%%%%%%%%%%%%%%%%%%%%%%%%%%%%%%%%%%%%%%%%%%%%%

As before, we consider $(X,L,\om,J)$ satisfying conditions (X), (Y),
(L), (K) and (R). Now we assume in addition
that $(X,L,\om=d\lambda)$ is {\em exact with convex end},
i.e.~$\lambda$ is a positive contact form on $M$ which extends as a
primitive of $\om$ to $\bar X$.

For a holomorphic curve $f:(S,\p S,j)\to(X,L,J)$ a {\em switch} is a
point in $\p S$ which is mapped to $K$.

Now we can prove the Finiteness Theorem~\ref{thm:fin} in the
introduction, which we restate for convenience.

\begin{thm}\label{thm:fin2}
In the situation of Theorem~\ref{thm:comp}, suppose in addition
that\linebreak
$(X,L,\om=d\lambda)$ is exact with convex end. Then for each
$s\in \N$ and $C>0$ there exists a constant $\kappa(s,C)\in\N$ such
that every holomorphic {\em disk} $f:(\dot D,\p\dot D,j)\to (X,L,J)$
with at most $s$ boundary punctures and energy $\leq C$ has at most
$\kappa(s,C)$ switches.
\end{thm}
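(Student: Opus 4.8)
The plan is to argue by contradiction, combining the compactness theorem with the winding-number estimate (Lemma~\ref{lem:wind}) from the local theory. Suppose the claim fails for some fixed $s$ and $C$: then there is a sequence of holomorphic disks $f_n:(\dot D,\p\dot D,j_n)\to(X,L,J)$, each with at most $s$ boundary punctures and energy $\le C$, but with the number of switches of $f_n$ tending to infinity. Since the signature is essentially fixed (genus $0$, one boundary component, at most $s$ boundary punctures, and we may pass to a subsequence so the exact number of punctures and their asymptotic Reeb chords are constant), Theorem~\ref{thm:compactness} applies: after passing to a subsequence, $f_n$ converges to a stable holomorphic building $f$ with the same signature and energy $\le C$. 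Exactness with convex end is used here to rule out interior bubbling producing closed holomorphic spheres/curves of positive area that could absorb switches in a "hidden" way, and more importantly it is needed so that the limit building has only finitely many components carrying only finitely many switches — which is exactly the contradiction we are after, provided we can show the switches of $f_n$ don't all escape to infinity or pile up at a single point.

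The heart of the matter is a \emph{local} statement: near any point of $K$, in the holomorphic coordinates provided by condition (K) in which $L_0=\R^n$ and $L_1=\R^k\oplus i\R^{n-k}$, a switch of $f_n$ is a boundary zero of one of the component functions $\nu_\ell\circ f_n$ (the transverse $\C$-valued components from the proof of Proposition~\ref{prop:convergence}), each of which satisfies (F1--3). The number of switches in a small fixed region is therefore controlled, via Lemma~\ref{lem:wind}, by the winding number $w(\nu_\ell\circ f_n,\Gamma)$ along the boundary of that region. So the strategy is: cover a neighborhood of $K$ in $\Sigma_n$ (the part of the domain mapping into $\UU_\eps$) by finitely many such coordinate charts, in each of which the relevant winding numbers are bounded. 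If the total number of switches blows up, then on some sequence of nested regions one of these winding numbers must blow up, and then Lemma~\ref{lem:4} (Vitali) forces the corresponding limit component function $\nu_\ell\circ f$ to vanish identically on that region — i.e.\ part of the limit curve lies entirely in a branch of $L$, or in the self-intersection $K$. One then has to see that this is incompatible with the limit being a stable curve of the given signature and finite energy: a nonconstant holomorphic map with boundary entirely in $K^\C$ (or in one branch) that is a legitimate SFT limit component must be constant by the maximum principle / exactness, so it contributes nothing, and the only way the winding blows up is if \emph{infinitely many} switches accumulate — but the limit curve has only finitely many components and, on each, $\nu_\ell\circ f$ has only finitely many boundary zeros unless it vanishes identically. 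Thus the total winding is bounded in terms of the finitely many components and the (bounded) number of asymptotic ends, which bounds the number of switches in terms of $s$ and $E$.

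\textbf{The main obstacle} will be the bookkeeping near the degenerating thin parts of the domain: switches of $f_n$ can migrate into the long strips/annuli that pinch off, and one must ensure that each such thin component of the limit building only carries finitely many switches. Concretely, for a long holomorphic strip $u_n=f_n\circ\phi_n$ mapping into $\UU_\eps$ with the gradient bounds \eqref{eq:thin1}, the transverse components $\nu_\ell\circ u_n$ are bounded holomorphic functions on longer and longer strips satisfying (F1--3); if they carried unboundedly many boundary zeros, Lemma~\ref{lem:4} would again force the limit to vanish, which (as in Step~3 of the compactness proof, using Lemma~\ref{lem:longstrips} and Proposition~\ref{prop:smallarea}) is incompatible with the convergence to a stable building of bounded energy. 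Making this quantitative — i.e.\ extracting an explicit bound $\kappa(s,C)$ rather than just "finite" — requires tracking that each nontrivial strip in the breaking carries area $\ge\hbar(C)$ (Lemma~\ref{lem:quant2}), so there are at most $\lfloor C/\hbar(C)\rfloor$ of them, and each sublevel/component contributes a winding number bounded in terms of the Conley--Zehnder-type data of its (finitely many, $\le s$) asymptotic Reeb chords. Assembling these local bounds into one uniform $\kappa(s,C)$ is the step that needs care, but it is a finite combination of the ingredients already assembled in Section~\ref{sec:local} and the proof of Theorem~\ref{thm:compactness}.
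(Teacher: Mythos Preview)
Your overall architecture is the paper's: contradiction, compactness, then the local winding/Vitali machinery on the transverse components $\nu_\ell\circ f_n$. Two points need sharpening, though.

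First, you mislocate the role of exactness. It is \emph{not} used to rule out sphere bubbles or to bound the number of components (compactness already gives finitely many components, and closed bubbles carry no switches anyway). The paper uses exactness at exactly one spot: once Vitali forces $\nu_\ell\circ f\equiv 0$ on a neighbourhood in some nonconstant component $C$, unique continuation propagates this along $C$ so that the entire (connected) boundary $\p C$ lands in $K$. Since $K\subset\inn\bar Y$ is compact, $C$ then has \emph{no} boundary punctures. But exactness of $L$ and Stokes force any nonconstant disk component to carry at least one positive puncture. That is the contradiction. Your phrase ``must be constant by the maximum principle / exactness'' is gesturing at this, but the maximum principle is irrelevant and you should make the chain ``$\nu_\ell\circ f\equiv 0$ locally $\Rightarrow$ unique continuation $\Rightarrow$ $\p C\subset K$ $\Rightarrow$ no punctures $\Rightarrow$ contradiction with exactness'' explicit.

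Second, your ``main obstacle'' paragraph is a red herring. You do not need to redo the thin-part analysis (Lemma~\ref{lem:longstrips}, Proposition~\ref{prop:smallarea}, the $\hbar(C)$ bound); that is already absorbed into the compactness theorem. The genuine bookkeeping issue is different: switches of $f_n$ can accumulate at a \emph{constant} limit component (one mapping to a point of $K$), where $\nu_\ell\circ f\equiv 0$ trivially and Vitali tells you nothing. The paper handles this by choosing, for each such constant piece, a compact neighbourhood $S_i$ whose ``free'' boundary $\Gamma_i=\p S_i\setminus\p S$ lies in the adjacent nonconstant components (hence in the thick part, away from $f^{-1}(K)$). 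On $\Gamma_i$ the convergence $f_n\to f$ is $C^\infty$, so $w(\nu_\ell\circ f_n,\Gamma_i)\to w(\nu_\ell\circ f,\Gamma_i)<\infty$; but Lemma~\ref{lem:wind} applied to $f_n$ on $S_i$ gives $w(\nu_\ell\circ f_n,\Gamma_i)\ge \tfrac14\#\{\text{switches of }f_n\text{ in }S_i\}\to\infty$. This is the step your sketch does not isolate. Finally, there is no need to extract an explicit $\kappa(s,C)$: the argument is pure contradiction, and the existence of the bound follows.
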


\begin{proof}
We argue by contradiction. So assume there exists a sequence of
holomorphic disks $f:(\dot D,\p\dot D,j)\to (X,L,J)$
with at most $s$ boundary punctures and energy $\leq C$
such that $f_n^{-1}(K) \cap \p D$ contains at least $n$ points.
After passing to a subsequence, we may assume that the number $s$ of
boundary punctures and the ordered collection of asymptotic Reeb
chords $\Gamma=(\gamma_1,\dots,\gamma_s)$ is fixed in the sequence.

By Theorem~\ref{thm:compactness} in the previous section, some subsequence
of the $f_n$ converges to a stable holomorphic curve $f$ of some
finite height $(1|k)$, whose domain is a disc-like nodal Riemann surface
$(S,j,Z,D,r)$ with $Z \subset \p S$ of cardinality $s$. The convergence
is in $C^0$ and in $C^\infty_\loc$ away from the punctures, the nodes
and $f^{-1}(K) \cap \p S$.

Consider a component $C$ of $S$ on which $f$ is non-constant. We claim
that in this case $f^{-1}(K)\cap\p C$ is finite. To see this, suppose
otherwise. Since $f$ tends to infinity near the boundary punctures the
set $f^{-1}(K)\cap\p C$ avoids a neighbourhood of the punctures and
thus has a limit point $p\in\p C$. Pick a neighbourhood $S_p$ of $p$
which is mapped into a neighbourhood as in condition (K2) on which
we have holomorphic coordinates mapping the branches of $L$ to $\R^n$
and $\R^k\times i\R^{n-k}$. Consider for $\ell=k+1,\dots,n$ the
holomorphic map $\nu_\ell\circ f:S_p\to\C$, where $\nu_\ell:\C^n \to
\C$ is the projection onto the $\ell$-th $\C$-factor in these
coordinates. Since $\nu_\ell\circ f$ has infinitely many zeroes in $S_p$,
Lemma~\ref{lem:4} implies that it vanishes identically, so
$f(S_p) \subset K^\C$, where $K^\C\subset\UU_\eps$ is the
complexification of $K$ in condition (K).
%Let $S_0$ be the component of the limiting domain $S$ containing
%$U_{x_i}$.
By unique continuation, the component of $p$ in $f^{-1}(\UU_\eps))$ is
mapped into $K^\C$, so in
particular the (connected) boundary of $C$ is mapped entirely into
$K$. But since $L$ is exact, the boundary of a
nonconstant component must contain at least one positive
puncture. This contradiction completes the proof of the claim.

It follows that $f^{-1}(K)\cap\p S$ consists of finitely many points
and finitely many components on which $f$ takes a constant value on
$K$. Pick disjoint compact sets $S_1,\dots,S_r$ with piecewise smooth
boundary such that $f^{-1}(K)\cap\p S\subset\cup_i\inn S_i$ and for each
$S_i$ one of the following holds:

(a) $S_i$ contains precisely one point of $f^{-1}(K)\cap\p S$ and no
nodes, or

(b) $S_i$ contains precisely one connected union of components on
which $f$ takes a constant value on $K$.

Moreover, we may assume that each $S_i$ is mapped into the
neighbourhood $\UU_\eps$ of $K$ in condition (K).
Note that for $n\geq N$ sufficiently large we have $f_n(\p D \setminus
\cup_iS_i) \cap K=\varnothing$ by the $C^0$-convergence on compact sets.
Since $f_n^{-1}(K) \cap \p D$ contains at least $n$ points, it
follows that in some $S_i$ the map $f_n$ has at least $n/r$ points of
$\p D$ mapping to $K$.

Suppose first that this $S_i$ is of type (a). Then $f_n\to f$ in
$C^\infty$ on $S_i$ and (composing as above with projections to $\C$)
Lemma~\ref{lem:4} implies that $f$ maps $S_i$ into $K^\C$. As above,
this yields a contradiction.

Finally, suppose that $S_i$ is of type (b). Then we argue as in the
proof of Lemma~\ref{lem:4}: By Lemma~\ref{lem:wind},
the winding number of $f_n$ over $\Gamma_i:=\p S_i\setminus(S_i\cap\p
S)$ satisfies
$$
   w(f_n,\Gamma_i) \geq \frac{n}{4r} \stackrel{n
   \to\infty}\longrightarrow \infty.
$$
On the other hand, the smooth convergence $f_n\to f$ on $\Gamma_i$
implies
$$
   w(f_n,\Gamma_i) \stackrel{n \to\infty}\longrightarrow
   w(f,\Gamma_i)<\infty.
$$
This contradiction completes the proof of the Finiteness Theorem.
\end{proof}

\begin{corollary}\label{cor:finite}
In the situation described above, for every ordered collection of Reeb chords
$\Gamma=(\gamma_1,\dots,\gamma_s)$, there exists a constant
$\kappa=\kappa(\Gamma)$ with the following property.
If $Z \subset \p D^2$ has cardinality $s$ and $f:(D^2\setminus Z,\p
D^2\setminus Z) \to (T^*\R^3,L)$ is a $J$-holomorphic disc with
asymptotics $\Gamma$,
then $f^{-1}(K) \cap \p D^2$ contains at most $\kappa$ points. In
particular, $f$ has at most $\kappa$ switches.
\end{corollary}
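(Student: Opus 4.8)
The plan is to deduce Corollary~\ref{cor:finite} directly from the Finiteness Theorem~\ref{thm:fin2} by observing that the setting of the corollary is a special case of the hypotheses there, and then combining it with a compactness step to get a bound depending only on the ordered collection $\Gamma$ rather than just on $s=|\Gamma|$ and an a priori energy bound. First I would check that $(T^*\R^3,L=\R^3\cup NK,\om=d\lambda)$ with the metric almost complex structure satisfies conditions (X), (Y), (L), (K) and (R): this is exactly Example~\ref{ex:cot}, with condition (K) arranged by Remark~\ref{rem:K2} since a $1$-knot $K\subset\R^3$ has trivial normal bundle and admits a flat metric, and $\om=d\lambda$ is exact with convex end. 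So Theorem~\ref{thm:fin2} applies.

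The remaining point is that Theorem~\ref{thm:fin2} gives a bound $\kappa(s,C)$ in terms of the number of punctures \emph{and} an energy bound $C$, whereas the corollary wants a bound $\kappa(\Gamma)$ depending only on the asymptotic data. The key observation is that for an exact holomorphic disc the energy is determined by the asymptotics: by Stokes' theorem applied to $f^*\lambda$ on $\dot D$, one has $E_\om(f)=\sum_{i=1}^s \int_{\gamma_i}\lambda = \sum_{i=1}^s \ell(\gamma_i)$, the total action of the asymptotic Reeb chords (here we use exactness of $\om=d\lambda$ and that $\lambda$ vanishes on $L$, which holds since $L$ is a union of a zero section and a conormal bundle). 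A similar standard estimate bounds $E_\lambda(f)$, and hence the full Hofer energy $E(f)$, in terms of $\sum_i\ell(\gamma_i)$. Thus if we set $C:=C(\Gamma)$ to be this action sum (or a fixed multiple of it), every $J$-holomorphic disc with asymptotics $\Gamma$ automatically has $E(f)\le C(\Gamma)$, and Theorem~\ref{thm:fin2} with $s=|\Gamma|$ yields the bound $\kappa(\Gamma):=\kappa(|\Gamma|,C(\Gamma))$ on the number of switches. The final sentence of the corollary is just the definition of a switch as a point of $\p D^2$ mapping to $K$.

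I do not expect any serious obstacle here: the corollary is essentially a repackaging of Theorem~\ref{thm:fin2} in the concrete geometric situation needed for~\cite{CELN}. The only thing requiring a line of care is the energy identity, and in particular checking that no boundary terms appear at the punctures beyond the action of the $\gamma_i$ — this follows from the asymptotic description in Proposition~\ref{prop:asymptotics}(b), which identifies the behavior near each boundary puncture with a strip over a Reeb chord of length $\ell(\gamma_i)$, so that $\int_{\dot D}f^*(d\lambda)$ converges and equals $\sum_i\ell(\gamma_i)$. Once that bookkeeping is in place, the corollary is immediate.
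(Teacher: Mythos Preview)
Your proposal is correct and follows exactly the approach the paper implicitly intends: the paper gives no separate proof of the corollary, stating it immediately after Theorem~\ref{thm:fin2} as a direct specialization. Your filling-in of the missing step---that fixing the asymptotic Reeb chords $\Gamma$ fixes the Hofer energy via Stokes' theorem (using $\lambda|_L=0$ for $L=\R^3\cup NK$), so that one may take $C=C(\Gamma)$ in Theorem~\ref{thm:fin2}---is precisely what is needed and is standard in this exact setting.
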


\appendix
\section{Dimensions of moduli spaces}\label{sec:app}
Consider a quadruple $(X,L,\om,J)$ satisfying conditions (X), (Y),
(L), (K) and (R) in Section~\ref{sec:global}.
In this appendix we give the dimension formula for moduli spaces of
holomorphic curves in $X$ with {\em smooth} boundary on $L$, interior
punctures asymptotic to closed Reeb orbits, boundary punctures
asymptotic to Reeb cords, and {\em Lagrangian intersection punctures},
i.e.~boundary punctures asymptotic to the clean self-intersection $K$.
Before proving the formula we introduce the (topological)
data needed to state it.

Consider a Lagrangian intersection puncture mapping to a point $k$ in
a component $K_{d}$ of $K$, where $\dim(K_d)=n-d$. We restate the
relevant part of condition (K) as follows:
\begin{itemize}
\item[$({\rm t0})$]
Near $k$ there exist local holomorphic coordinates
$\C^{n-d}\times\C^{d}$ in which $L$ corresponds to
$\R^{n-d}\times(\R^{d}\cup i\R^{d})$, and $T^{\ast} K$
corresponds to $\C^{n-d}\times\{0\}$.
\end{itemize}
Assume that $f\colon (S,\pa S)\to (X,L)$ is a holomorphic map with a
Lagrangian intersection puncture. Pick a local coordinate $z$ in upper
half plane $\H$ on the source $S$, where the Lagrangian intersection
puncture corresponds to $0\in \H$, and local holomorphic coordinates
$\C^{n-d}\times\C^{d}$ around $f(0)$ in the target as in $({\rm
  t0})$. In these coordinates $f$ is expressed as
\[
f(z)=\bigl(f_1(z),f_2(z)\bigr)\in\C^{n-d}\times\C^{d},
\]
where $f_1$ maps $\R$ to $\R^{n-d}$, and $f_2$ maps $\R_\pm$ to $\R^d$
or $i\R^d$. It follows that $f_1,f_2$ have unique power series
expansions of the form
\begin{equation}\label{eqn:asymptwexp}
   f_1(z) = \sum_{j=0}^\infty a_jz^j,\qquad
   f_2(z) = \sum_{j=0}^\infty c_jz^{j+w},
\end{equation}
%where the sum ranges either over positive integers or over odd
%   positive half integers and
where $a_j\in\R^{n-d}$ for all $j$, either $c_j\in\R^{d}$ for all $j$
or $c_j\in i\R^{d}$ for all $j$, $c_0\neq 0$, and $w$ is either a
positive half integer (if the map $f$ switches local sheets of $L$ at
$k$) or a positive integer (if $f$ remains on one sheet). We call $w$
the {\em asymptotic winding number} of $f$ at the Lagrangian
intersection puncture $k$. (This notion is clearly independent of the
choices involved in its definition).

Assume that the holomorphic map $f\colon (S,\pa S)\to (X,L)$ has
\begin{itemize}
\item $p$ positive interior punctures at Reeb orbits $\gamma_1,\dots,\gamma_p$,
\item $q$ negative interior punctures at Reeb orbits $\beta_1,\dots,\beta_q$,
\item $s$ positive boundary punctures at Reeb chords $c_1,\dots,c_s$,
\item $t$ negative boundary punctures at Reeb chords $b_1,\dots,b_t$, and
\item $l$ Lagrangian intersection punctures on the boundary mapping to
  clean self intersection components $K_{d_1},\dots, K_{d_l}$ with
  asymptotic winding numbers $w_1,\dots,w_l$, respectively, where
  $\dim(K_{d_j})=(n-d_j)$, $j=1,\dots,l$.
\end{itemize}

We trivialize $TX$ along parts of the map $f$ as follows.
\begin{itemize}
\item[$({\rm t1})$] Fix complex trivializations $Z_{\gamma}$ of the
  contact planes in the convex end of $X$ along all Reeb orbits
  $\gamma\in\{\gamma_1,\dots,\gamma_p\}$.
\item[$({\rm t2})$] Fix complex trivializations $Z_{\beta}$ of the
  contact planes in the concave end of $X$ along all Reeb orbits
  $\beta\in\{\beta_1,\dots,\beta_q\}$.
\end{itemize}
If $\alpha\in\{\gamma_1,\dots,\gamma_p\}$ or
$\alpha\in\{\beta_1,\dots,\beta_q\}$ then the linearized Reeb flow
induces a $1$-parameter family of symplectomorphisms $\Phi_t\colon
\xi_{\alpha(0)}\to\xi_{\alpha(t)}$, where $\xi_{\alpha(t)}$ is the
contact hyperplane at $\alpha(t)$, $t\in[0,T]$. Using the
trivialization $Z_\alpha$ from $({\rm t1})$ or $({\rm t2})$, we view
$\Phi_t$ as a path of symplectomorphisms
$\Phi^{Z_\alpha}_t\colon\C^{n-1}\to\C^{n-1}$. Write
\begin{equation}\label{eqn:CZ}
\mu_{\rm CZ}(\alpha,Z_\alpha)
\end{equation}
for the Conley-Zehnder index of the path $\Phi^{Z_\alpha}_t$, $0\le
t\le T$ (see~\cite{EES}).
\begin{remark}\label{r:CZind}[cf.~\cite{EES}]
The Conley-Zehnder index of a path $\Psi_t\colon \C^{m}\to\C^{m}$,
$0\le t\le 1$ is the Maslov index of the path of Lagrangian planes in
$\C^{m}\oplus\C^{m}$ corresponding to the graph of $\Psi_{t}$. The
Maslov index of a path $L_t$, $0\le t\le 1$, of Lagrangian planes in
$\C^{k}$ equals $\la\mu,[\hat L]\ra-\frac{k}{2}$, where $\mu$ is the
Maslov class and where $\hat L$ is the loop of Lagrangian planes
obtained by closing $L$ by a positive  rotation taking $L_1$ and
$L_0$. Here a positive rotation is defined as follows.  Two Lagrangian
subspaces $V_0$ and $V_1$ in $\C^{k}$ defines a decomposition
$W=W^1\oplus\dots\oplus W^r$ into orthogonal subspaces and a complex
angle $(\theta_1,\dots,\theta_r)$,
$0\le\theta_1<\theta_2<\dots<\theta_r<\pi$ as follows. Let $\theta_1$
be the smallest number in $[0,\pi)$ such that
\[
\dim\left((e^{i\theta_1}\cdot V_0)\cap V_1\right)\ge 1.
\]
Let $W^{1}\subset\C^{k}$ be the complex subspace generated by $e^{i\theta_1}\cdot V_0$ and let $W'$ be its orthogonal complement. Then $V_0'=W'\cap(e^{i\theta_1}\cdot V_0)$ and $V_1'=W'\cap V_1$ are Lagrangian subspaces.
Let $\theta_1'$ be the smallest number in $(0,\pi)$ such that
\[
\dim\left((e^{i\theta_1'}\cdot V_0')\cap V_1'\right)\ge 1.
\]
Let $\theta_2=\theta_1'+\theta_1$ and let $W_2\subset W'\subset W$ be the complex subspace generated by $e^{i\theta_1'}\cdot V_0'$. Repeating this construction we get a decompositon and complex angles as claimed.

The {\em positive rotation} taking $V_0$ to $V_1$ is the $1$-parameter family of linear transformations which acts by multiplication by $e^{i\theta_jt}$, $t\in[0,1]$ on $W^j$, $j=1,\dots,r$. The {\em negative rotation} taking $V_0$ to $V_1$ acts by multiplication by $e^{-i(\pi-\theta_j)t}$, $t\in[0,1]$ on $W^j$, $j=1,\dots,r$.
\end{remark}

\begin{itemize}
\item[$({\rm t3})$] Fix complex trivializations $Z_c$ of the
contact planes along all Reeb chords $c\in \{c_1,\dots,c_s\}$ of the Legendrian submanifold in the convex end which have the property that the linearized Reeb flow along the chord $c$ expressed in $Z_c$ is
constantly equal to the identity.
\item[$({\rm t4})$] Fix complex trivializations $Z_b$ of the
contact planes along all Reeb chords $b\in \{b_1,\dots,b_t\}$ of the Legendrian submanifold in the concave end which have the property that the linearized Reeb flow along the chord $b$ expressed in $Z_b$ is constantly equal to the identity.
\end{itemize}
Completing these trivializations with a vector field in the symplectization direction we get
trivializations of $TX$ along any Reeb orbit and along any Reeb
chord appearing as asymptotic data for $f$.
\begin{itemize}
\item[$({\rm t5})$] Fix complex trivializations $Z_C$ of $f^*TX$ along
each component $C$ of the complement of the punctures in $\pa S$ with
the following properties. If an endpoint of $C$ is a Reeb chord
puncture at a Reeb chord $a$ then $Z_C=Z_a$ at the corresponding Reeb chord endpoint in some neighborhood of the endpoint of $C$. If a Lagrangian intersection puncture is the common endpoint of
boundary components $C$ and $C'$ then $Z_C=Z_{C'}$ at the common endpoint.
\end{itemize}
The choices $({\rm t1})-({\rm t2})$ give trivializations of
$f^{\ast}TX$ near each interior puncture in $S$ and the choices $({\rm
  t3})-({\rm t5})$ give a trivializations $Z_{\pa_j f}$ of $f^{\ast}
TX$ along the $j^{\rm th}$ component $C_j$ of the boundary $\pa S$,
where we think of punctures as marked points so that $\pa S$ becomes a
closed $1$-manifold. Let
\begin{equation}\label{eqn:relc_1}
c_1^{\rm rel}\bigl(u^{\ast}(TX);Z_{\pa f};Z_{\gamma_1},\dots,Z_{\gamma_p};Z_{\beta_1},\dots,Z_{\beta_q}\bigr)
\end{equation}
denote the obstruction to extending this trivialization over $S$. Here we think of the obstruction as the number arising from evaluating the obstruction class on the orientation class of $(S_0,\pa S_0)$, where $S_0$ is the surface obtained from $S$ by removing small open disks around all its interior punctures and where the bundle is trivialized along $\pa S_0$.

Let $\Lambda$ denote a Legendrian submanifold at one of the ends of $X$ and let $a\in\{c_1,\dots,c_s\}$ or $a\in\{b_1,\dots,b_t\}$ be a Reeb chord of $\Lambda$. Let $a^{-}$ denote the endpoint of $a$ where the Reeb vector field points into $a$, and let $a^{+}$ denote the other endpoint of $a$. The image of the tangent space $T_{a^{-}}\Lambda$ under the linearized Reeb flow along $a$ is a Lagrangian plane $(T_{a^{-}}\Lambda)'\subset\xi_{a^{+}}$, where $\xi_y$ denotes the contact plane at $y$. Assume that $a$ is generic in the sense that the two Lagrangian subspaces $(T_{a^{-}}\Lambda)'$ and $T_{a^{+}}\Lambda$ of $\xi_{a^{+}}$ intersect transversely (after small perturbation, all Reeb chords are generic). Let
\[
R_{a^+}^{\rm neg}(a^-,a^+)\colon\xi_{a^{+}}\to\xi_{a^{+}}
\]
denote the rotation in $\xi_{a^{+}}$  in the negative direction which takes  $(T_{a^{-}}\Lambda)'$ to $T_{a^{+}}\Lambda$.  Let
\[
R_{a^{-}}^{\rm neg}(a^{+}, a^{-})\colon\xi_{a^{-}}\to\xi_{a^{-}}
\]
be defined similarly, rotating the image $(T_{a^{+}}\Lambda)'$ of $T_{a^{+}}\Lambda$, under the backwards linearized Reeb flow along $a$, in the negative direction in $\xi_{a^{-}}$ to $T_{a^{-}}\Lambda$.

Let $C_j'$ denote the complement of the punctures in the $j^{\rm th}$ component $C_j\subset \pa S$. Then the tangent planes to $L$ along $f(C'_j)$ expressed in the trivializations $Z_{\pa_j f}$ constitute a collection of paths of Lagrangian planes in $\C^{n}$. We close these paths to a loop as follows:
\begin{itemize}
\item[$({\rm t3}')$] The tangent planes of $L=\Lambda\times\R$ at
  endpoints of a Reeb chord $c\in\{c_1,\dots,c_s\}$ are connected by
  the product of the linearized Reeb flow along $c$ in $\xi$ and the
  identity in the symplectization direction, followed by the path
\[
R_{c^{+}}^{\rm
  neg}(c^{-},c^{+})\left((T_{c^{-}}\Lambda)'\right)\oplus\R\subset
\xi_{c^{+}}\oplus\C.
\]
\item[$({\rm t4'})$] The tangent planes of $L=\Lambda\times\R$ at
  endpoints of a Reeb chord $b\in\{b_1,\dots,b_t\}$ are connected by
  the backwards linearized Reeb flow along $b$ in $\xi$ and the
  identity in the symplectization direction, followed by the path
\[
R_{b^{-}}^{\rm
  neg}(b^{+},b^{-})\left((T_{b^{+}}\Lambda)'\right)\oplus\R\subset
\xi_{b^{-}}\oplus\C.
\]
\item[$({\rm t0'})$] The tangent planes at a Lagrangian intersection
  puncture mapping to $K_{d}\in\{K_{d_1},\dots,K_{d_l}\}$ of
  asymptotic winding number $w$ correspond to the planes $\R^{n-d}\times
  \R^{d}$ or $\R^{n-d}\times i\R^{d}$ in the coordinates $({\rm
    t0})$. Connect these planes by multiplying the tangent plane of
  the boundary component oriented toward the puncture with the matrix
\begin{equation}\label{eqn:Lagclose}
\left(
\begin{matrix}
1 & 0\\
0 &  e^{-sw\pi i}
\end{matrix}
\right),
\quad 0\le s\le 1,
\end{equation}
in the local $\C^{n-d}\times \C^{d}$-coordinates of $({\rm t0})$.
\end{itemize}
Define
\begin{equation}\label{eqn:Maslov}
\mu(\pa_j f,Z_{\pa_j f})
\end{equation}
as the Maslov index of the loop of Lagrangian subspaces in $\C^{n}$
which corresponds to the $j^{\rm th}$ boundary component of $S$ and
which is constructed by closing the paths of Lagrangian planes as
described in $({\rm t3'})$, $({\rm t4'})$, and $({\rm t0'})$.

Let $\MM(f)$ denote the moduli space of holomorphic curves in $X$ with
boundary on $L$, with punctures at Reeb orbits, Reeb chords, and at
Lagrangian self intersection components as described above,  which
have the same additional structure as $f$
(i.e. asymptotics and Lagrangian intersection punctures, including
asymptotic windings), have domain diffeomorphic to $(S,\pa S)$, and which are
homotopic to $f$ through (continuous) maps respecting the additional
structure. Recall that $\dim(X)=2n$, let $g$ denote the genus of $S$,
and let $r$ denote the number of boundary components of $\pa S$.

\begin{thm}\label{thm:dim}
With the notation from \eqref{eqn:CZ}, \eqref{eqn:relc_1}, and
\eqref{eqn:Maslov}, the formal dimension of $\MM(f)$ is given by
\begin{align*}
\dim\bigr(\MM(f)\bigr)&=(n-3)(2-2g-r)+(s+t+l)\\
&+\sum_{j=1}^{p}\Bigl(\mu_{\rm CZ}\bigl(\gamma_j,Z_{\gamma_j}\bigr)-(n-3)\Bigr)\\
&-\sum_{j=1}^{q}\Bigl(\mu_{\rm CZ}\bigl(\beta_j,Z_{\beta_j}\bigr)+(n-3)\Bigr)\\
&+\sum_{j=1}^{r}\mu\bigl(\pa_j f,Z_{\pa_j f}\bigr)\\
&+2c_1^{\rm rel}\bigl(u^{\ast}(TX);Z_{\pa
  f};Z_{\gamma_1},\dots,Z_{\gamma_p};Z_{\beta_1},\dots,Z_{\beta_q}\bigr).
\end{align*}
\end{thm}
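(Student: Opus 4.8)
The plan is to identify the formal dimension of $\MM(f)$ with the Fredholm index of the linearized Cauchy--Riemann operator at $f$, and then to compute that index by the standard additivity of such indices under gluing/excision, which reduces the computation to a ``bulk'' term plus one contribution per puncture.

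First I would set up the linearization $D_f$ as a real-linear Cauchy--Riemann operator on sections of $f^{\ast}TX$ with the totally real boundary condition $f^{\ast}TL$ along the smooth part of $\pa S$, in exponentially weighted Sobolev spaces with a small positive weight at each puncture. At the interior Reeb-orbit punctures and the boundary Reeb-chord punctures the asymptotic operators are nondegenerate by condition (R). At a Lagrangian intersection puncture mapping to a component $K_d$ one uses condition (K) in the form (t0): in the given holomorphic coordinates $f^{\ast}TX$ splits near the puncture as $\C^{n-d}\oplus\C^{d}$ with boundary condition $\R^{n-d}$ on the first factor and, on the second, the condition jumping between $\R^{d}$ and $i\R^{d}$ across the puncture, the switch being recorded by the half-integrality of the winding number $w$ in \eqref{eqn:asymptwexp}; the corresponding asymptotic operator (see below) has trivial kernel for the chosen weight. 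Hence $D_f$ is Fredholm, and the usual argument identifies $\ind D_f$ with $\dim\MM(f)$, the interior and boundary punctures being treated as marked points (this is what produces the unpunctured Euler characteristic $2-2g-r$ and the count $s+t+l$ in the formula). Since the index depends on $f$ only through the data in the statement and on the boundary condition only through its homotopy class, I may homotope $f$ and deform the boundary condition near each puncture to a standard model.

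The bulk term together with the interior Reeb-orbit ends and the boundary Reeb-chord ends then contribute exactly as in symplectic field theory and Legendrian contact homology: this yields $(n-3)(2-2g-r)$, the relative first Chern number $2c_1^{\rm rel}$, the sums $\sum_j(\mu_{\rm CZ}(\gamma_j,Z_{\gamma_j})-(n-3))$ and $-\sum_j(\mu_{\rm CZ}(\beta_j,Z_{\beta_j})+(n-3))$, and the parts of $\sum_j\mu(\pa_j f,Z_{\pa_j f})$ and of $s+t$ coming from the closings (t3$'$), (t4$'$). For the interior ends this is the computation of \cite{EGH,BEHWZ}; for the Reeb-chord ends, in the trivializations normalized by (t3)--(t4) and (t3$'$)--(t4$'$), it is the one in \cite{EES}.

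The new ingredient, and where I expect the real work, is the contribution of a Lagrangian intersection puncture, which should equal $1-wd$ --- the $-wd$ appearing inside $\mu(\pa_j f,Z_{\pa_j f})$ through the closing (t0$'$), the $1$ being the corresponding summand of $s+t+l$. After a homotopy of the boundary condition supported near the puncture, $D_f$ decouples there into a standard $(\C^{n-d},\R^{n-d})$-operator and a $\bar\pa$-operator on the punctured half-disc on the $\C^{d}$-factor with the switching boundary condition $\R^{d}/i\R^{d}$. Uniformizing the punctured half-disc as a half-strip via $z\mapsto-\log z$ turns the latter into a constant-coefficient Cauchy--Riemann operator with boundary conditions $\R^{d}$ and $i\R^{d}$ on the two sides; its asymptotic operator has spectrum $(\tfrac12+\Z)\cdot\id$ in the switching case and $\Z\cdot\id$ in the non-switching case, and so is invertible for the small positive weight used (in the non-switching case the weight is what encodes the prescribed order of tangency to $K$). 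The index of this model operator is then read off by a spectral-flow count, and one checks that in the bookkeeping of the formula it splits as the Maslov contribution $-wd$ of the closing (t0$'$) --- multiplication by the matrix \eqref{eqn:Lagclose} --- together with the one unit of dimension counted in $s+t+l$. A convenient way to organize the model computation is to replace the $\C^{d}$-component by its componentwise square, which by Schwarz reflection (Lemma~\ref{lem:1}) converts the switching boundary condition into the smooth condition $\R^{d}$ and reduces everything to the standard totally real index on a half-disc; one then only has to track how the exponential weights and the winding $w$ transform under squaring. Adding the bulk term, the Reeb-orbit and Reeb-chord contributions, and these Lagrangian intersection contributions gives the stated formula; the only genuine difficulty is the weighted analysis with the corner-type switching boundary condition and the model index computation, the remainder being bookkeeping identical to the usual SFT and Legendrian contact homology index formulas.
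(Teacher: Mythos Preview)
Your overall strategy agrees with the paper's: identify the formal dimension with a Fredholm index, invoke the standard SFT and Legendrian contact homology computations for the Reeb-orbit and Reeb-chord ends, and isolate the new Lagrangian intersection contribution as $1-wd$ (the $1$ from the extra marked point, the $-wd$ from the Maslov index of the closing \eqref{eqn:Lagclose}). The implementation, however, differs from the paper in two ways worth noting.

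First, the paper does not bundle the conformal variations into $D_f$; it splits off the vector-field part $\bar\pa_{\rm vf}$, deforms that through Fredholm operators by rotating the boundary conditions negatively at each boundary puncture while sliding the exponential weights, caps the interior punctures with standard index-$(n-1)\mp\mu_{\rm CZ}$ spheres, and only at the end adds $\dim\TT=3r+(s+t+l)+2(p+q)-6+6g$. Your marked-point bookkeeping achieves the same arithmetic, but the paper's route makes the $(n-3)$ versus $(n-1)$ shifts transparent.

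Second, and more substantively, for a Lagrangian intersection puncture of winding $w$ the paper does \emph{not} use your spectral-flow/squaring approach. It sets up the functional-analytic neighbourhood in a Sobolev space with weight $e^{(w-\frac{1}{100})\tau}$ \emph{augmented} by an explicit $(n-d)(v+1)$-dimensional space of cut-off solutions $\psi\cdot a_je^{-\pi j(\tau+it)}$, $a_j\in\R^{n-d}$, $0\le j\le v$, where $v$ is the largest integer below $w-\tfrac{1}{100}$. This augmentation is essential: the single large weight needed to force winding $\ge w$ in the transverse $\C^{d}$-directions would otherwise also kill the low-order Taylor coefficients $a_0,\dots,a_v$ in the $K^\C$-directions of \eqref{eqn:asymptwexp}, which must remain free. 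Your phrase ``small positive weight at each puncture'' is literally correct only for the minimal winding ($w=\tfrac12$ or $w=1$); for larger $w$ the weight has to be tuned to $w$ and then the $\C^{n-d}$-factor must be decoupled by augmentation, a point your parenthetical ``the weight is what encodes the prescribed order of tangency'' gestures at but does not resolve. The paper then reinterprets the weighted strip problem via $z=e^{-\pi w}$ as a vanishing-to-order-$k$ condition on the half-disc and reads off $1-wd$ directly. Finally, the squaring trick you propose is a nonlinear operation on the map $f$, not on sections of $f^\ast TX$, so it does not straightforwardly compute the index of the \emph{linearized} operator; it is a good heuristic for guessing the answer but not a substitute for the weighted analysis.
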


\begin{remark}
As mentioned in Section \ref{sec:intro}, it follows from Theorem \ref{thm:dim} that the contribution from a Lagrangian intersection puncture mapping to a codimension $d$ clean intersection component with asymptotic winding number $w$ equals $1-wd$. Here $1$ is the contribution to $l$ and $-wd$ is the contribution to the Maslov index from the rotation in \eqref{eqn:Lagclose}.
\end{remark}

\begin{proof}
We consider first the case when there are no Lagrangian intersection punctures. The formal dimension of $\MM(f)$ equals the Fredholm index of the linearization of the $\bar\pa_J$-equation at $f$. The source space of this operator splits into the direct sum of an infinite dimensional functional analytic space of vector fields along $f$ and the tangent space of the space of conformal structures of the domain $S$ of $f$. We denote the restriction of the linearized $\bar\pa_J$-operator to the space of vector fields by $\bar\pa_{\rm vf}$.

The index of $\bar\pa_{\rm vf}$ remains constant as the operator is deformed through Fredholm operators. Consider the symplectization direction in $f^{\ast}TX$ near any boundary puncture in $S$. The boundary condition in this direction is degenerate. In order to describe a neighborhood of the map $f$ in a functional analytic setting (e.g. a polyfold neighborhood of $f$), one would use a Sobolev space with small positive exponential weights at the punctures and augment that space by one cut-off solution corresponding to translations in the $\R$-direction for each puncture. (With notation as above, if $\theta_{m}$ and $\theta_M$ denotes the smallest and largest complex angles respectively of $(T_{a^{-}}\Lambda)'$ and $T_{a^{+}}\Lambda$ over all Reeb chords $a\in\{c_1,\dots,c_p\}\cup\{b_1,\dots,b_t\}$ then the weight being small means that it is smaller than $\min\{\theta_m,\pi-\theta_M\}$.) However, for index purposes, this is equivalent to forgetting the cut-off solution and changing the weight to a small negative exponential weight. We will work in the setting of small negative exponential weight without auxiliary solutions below.

The first deformation of $\pa_{\rm vf}$ will change the
$\xi$-directions of the boundary condition near boundary punctures so
that they look like the symplectization direction. Consider the
boundary condition at a boundary puncture mapping to a Reeb chord
$a$. We deform it as follows. Rotate the image of the tangent space
$(T_{a^{\pm}}\Lambda)'$ at the endpoint $a^{\mp}$ of a boundary arc of
$f$ under the linearized Reeb flow along $a$ (forwards or backwards
according to the sign of the boundary puncture) in the negative
direction to $T_{a^{\mp}}\Lambda$ and simultaneously change the weight
at this puncture in the $\xi$-directions from its initial value
$1=e^{0}$ to a small {\em negative} exponential weight. It is
straightforward to check that this gives a path of Fredholm operators,
compare \cite[Proposition 6.14]{EES}. Denote the operator at the
endpoint of this path $\bar\pa_{\rm vf}'$. Consider the surface $\hat
S$ which is $S$ with boundary punctures erased. Since the change of
coordinates taking a neighborhood $[0,\infty)\times[0,1]$ of a
puncture in $S$ to a neighborhood of $0\in\H$ of the corresponding
point in $\hat S$ is $w\mapsto e^{-\pi w}$ it follows that the index
of the operator $\bar\pa_{\rm vf}'$ on $S$ equals the index of the
$\bar\pa$-operator on the surface $\hat S$,  with boundary condition
naturally induced from the boundary condition of $\bar\pa_{\rm vf}'$,
see \cite[Proposition 6.13]{EES}. We denote this operator on $\hat
S$ by $\bar\pa_{\hat S}$. By definition, in the trivialization
$Z_{(\pa_j f)}$ along the $j^{\rm th}$ boundary component of $\hat S$,
the Lagrangian boundary condition of $\bar\pa_{\hat S}$ has Maslov
index $\mu\bigl(\pa_j f;Z_{\pa_jf}\bigr)$.

We next consider interior punctures. Also here, the asymptotic
operator is degenerate in the symplectization direction. As in the
case of boundary punctures, one would use small positive exponential
weights and cut-off solutions to define functional neighborhoods, but
in order to compute the index we might as well use small negative
exponential weights and no cut-off solutions to compute the
index. (Here small refers to small when compared to the distance
between the eigenvalues of the linearized return maps and $1$.)

Fix capping spheres of all Reeb orbits at interior punctures. A
capping sphere of a positive (negative) puncture where $f$ is
asymptotic to a Reeb orbit $\alpha$ is a once punctured sphere with a
trivial $\C^{n-1}\oplus\C$-bundle over it with trivialization which
extends the trivialization $Z_\alpha$ given near the puncture and with
a $\bar\pa$-operator with the asymptotics of a negative (positive)
puncture at $\alpha$ in the $\C^{n-1}$-direction and with trivial
asymptotics and small positive exponential weight in the
symplectization direction corresponding to $\C$. Thus, the capping
operator $\bar\pa^{+}_{\alpha}$ of a positive puncture at $\alpha$ has
index
\[
\ind(\bar\pa^+_\alpha)=(n-1)-\mu_{\rm CZ}(\alpha,Z_\alpha),
\]
and the capping operator $\bar\pa^{-}_{\alpha}$ of a negative puncture
at $\alpha$ has index
\[
\ind(\bar\pa^-_\alpha)=(n-1)+\mu_{\rm CZ}(\alpha,Z_{\alpha}),
\]
see \cite{Sw, BM}.
A well-known argument shows that the index is additive under linear
gluing of operators. We make one remark concerning this result in the
present setup: the symplectization $\C$-component of the operator on
a gluing neck limits to the standard operator on the infinite cylinder
with positive exponential weight at one end and negative exponential
weight at the other. This Fredholm operator is invertible and the
usual linear gluing argument applies.

The result of gluing the capping spheres at the punctures of $\hat S$
and the capping operators to the operator $\bar\pa_{\hat S}$ is a
$\bar\pa$-operator $\bar\pa_{\bar S}$ on a surface $\bar S$ of genus
$g$ with $r$ boundary components and a Lagrangian boundary condition
along each boundary component. The complex bundle over $\bar S$ comes
equipped with a trivialization $Z$ near its boundary. Let $\mu(\pa\bar
S,Z)$ denote the total Maslov index of the Lagrangian boundary
condition of the boundary measured with respect to the trivialization
$Z$ and let $c_1^{\rm rel}(Z)$ denote the relative Chern class which
is the obstruction to extending $Z$ from $\pa\bar S$ to all of $\bar
S$. Doubling $\bar S$ as well as the operator $\bar\pa_{\bar S}$ over
the boundary of $\bar S$ and applying the Riemann-Roch formula in
combination with complex conjugation gives
\[
\ind(\bar\pa_{\bar S})= n(2-2g-r) + \mu(\pa\bar S,Z) + 2c_1^{\rm rel}(Z).
\]
Additivity of the index then gives
\begin{align*}
\ind(\bar\pa_{\rm vf})&=\ind(\bar\pa_{\hat S})\\
&=n(2-2g-r)\\
&\ \ +\sum_{j=1}^{p}\bigl(\mu_{\rm CZ}(\gamma_j,Z_{\gamma_j})-(n-1)\bigr)
-\sum_{j=1}^{q}\bigl(\mu_{\rm CZ}(\beta_j,Z_{\beta_j})+(n-1)\bigr)\\
&\ \ +  \mu(\pa\bar S,Z) + 2c_1^{\rm rel}(Z)\\
&=n(2-2g-r)\\
&\ \ +\sum_{j=1}^{p}\bigl(\mu_{\rm CZ}(\gamma_j,Z_{\gamma_j})-(n-1)\bigr)
-\sum_{j=1}^{q}\bigl(\mu_{\rm CZ}(\beta_j,Z_{\beta_j})+(n-1)\bigr)\\
&\ \ +\sum_{j=1}^{r}\mu\bigl(\pa_j f, Z_{\pa_j f}\bigr)\\
&\ \ +2c_1^{\rm rel}\bigl(f^{\ast}(TX);Z_{\pa
  f};Z_{\gamma_1},\dots,Z_{\gamma_p};Z_{\beta_1},\dots,Z_{\beta_q}\bigr).
\end{align*}

In order to compute the dimension it remains only to compute the
dimension $\dim(\TT)$ of the space $\TT$ of conformal structures on
$S$. Doubling a surface with $r$ boundary components in a similar way
as above, studying the $\bar\pa$-equation for vector fields along the
surface which are tangent to the boundary along the boundary, and
noting that each interior puncture adds $2$ degrees of freedom and
each boundary puncture adds $1$ degree of freedom, we find that the
dimension of the space of conformal structures on $S$ equals
\[
\dim(\TT) = 3r+(s+t)+2(p+q)-6+6g.
\]
We thus have
\begin{align*}
\dim\bigl(\MM(f)\bigr)&=\ind(\bar\pa_{\rm vf})+\dim(\TT)\\
&=(n-3)(2-2g-r)+(s+t)\\
&\ \ +\sum_{j=1}^{p}\bigl(\mu_{\rm CZ}(\gamma_j,Z_{\gamma_j})-(n-3)\bigr)
-\sum_{j=1}^{q}\bigl(\mu_{\rm CZ}(\beta_j,Z_{\beta_j})+(n-3)\bigr)\\
&\ \ +\sum_{j=1}^{r}\mu\bigl(\pa_j f, Z_{\pa_j f}\bigr)\\
&\ \ +2c_1^{\rm rel}\bigl(f^{\ast}(TX);Z_{\pa
  f};Z_{\gamma_1},\dots,Z_{\gamma_p};Z_{\beta_1},\dots,Z_{\beta_q}\bigr),
\end{align*}
finishing the proof in the case when there are no Lagrangian
intersection punctures.

Consider next the case when there are Lagrangian intersection
punctures. In order to define a functional analytic neighborhood of a
map $f$ with such punctures of given asymptotic winding number $w$ mapping to
a codimension $d$ component of the clean intersection, we puncture the
boundary of $S$ and identify a neighborhood of the puncture in the
domain with $[0,\infty)\times[0,1]$ by the change of variables
$z=e^{-\pi w}$, $w=\tau+it\in [0,\infty)\times[0,1]$. Then the Taylor
expansion \eqref{eqn:asymptwexp} gives
%\[
%\bigl(f_1(\tau+it),f_2(\tau+it)\bigr)=\left(\sum_{k=0} a_k
%e^{-k\pi(\tau+it)},\,\,\,\sum_{k=w}^{\infty} c_k e^{-\pi k(\tau+i
%t)}\right)
%\]
\[
   f_1(\tau+it) = \sum_{j=0}^\infty a_j e^{-\pi j(\tau+it)},\qquad
   f_2(\tau+it) = \sum_{j=0}^\infty c_j e^{-\pi(j+w)(\tau+it)}.
\]
It follows that a neighborhood can be modeled on a Sobolev space with
positive exponential weight $e^{(w-\frac{1}{100})\tau}$ augmented by
the space of cut off solutions spanned by
\[
\psi\cdot a_0,\,\, \psi\cdot a_1e^{-\pi(\tau+it)},\,\,\dots\,\,,\,\,
\psi\cdot a_v e^{-\pi v(\tau+it)},
\]
where $\psi$ is a cut off function on $[0,\infty)\times[0,1]$, where
$v$ is the largest integer smaller than $w-\frac{1}{100}$, and where
$a_j\in\R^{n-d}$. (This  augmentation space has dimension
$(n-d)(v+1)$.)

At Lagrangian intersection punctures where the map switches local
sheets of $L$, we observe that, as for Reeb chords above, we can close
up the boundary condition at a Lagrangian intersection puncture by
rotating $-\frac{\pi}{2}$ in $\C^{d}$, keeping the weight, and obtain
a family of Fredholm operators.

Finally, we interpret the above weights in terms of the closed up
boundary condition along $\pa \hat S$. In the source we use the change
of variables $z=e^{-\pi w}$, $w=\tau+it\in S$ and $z\in \hat S$
as above. We conclude that an exponential weight in
$w$-coordinates of magnitude $k'\pi$, where $k-1 <k'<k$ for an integer
$k\ge 1$, corresponds to the condition that the sections of $f^{\ast}TX$
and their first $k-1$ derivatives vanishes at $0$ in the
$z$-coordinates. Thus the dimension formula is obtained by applying
the formula above to the boundary condition obtained by closing up the
Lagrangian boundary conditions at each Lagrangian intersection
puncture with a minimal negative rotation (i.e., if the map switches
sheets at the puncture we rotate by $-\frac{\pi}{2}$ in the
$C^{d}$-factor complementary to $T^{\ast}K$ and by $0$ in the
$\C^{n-d}$-factor corresponding to $T^{\ast}K$, and if the map does
not switch sheets we rotate by $0$ in both factors) and adding
\[
1-w'd
\]
for each Lagrangian intersection puncture, where $w'$ is the largest
integer smaller than $w$. Here $1$ comes from the increase in the
dimension of the space of conformal structures $\TT$. The theorem then
follows by definition of the close up at Lagrangian intersection
punctures, see \eqref{eqn:Lagclose}.
\end{proof}


\begin{thebibliography}{AA}

\bibitem{Abbas} C.~Abbas,
{\em Finite energy surfaces and the chord problem},
Duke Math. J. {\bf 96} (1999), no. 2, 241--316.

\bibitem{AS} A.~Abbondandolo and M.~Schwarz, {\em Floer homology of
    cotangent bundles and the loop product}, arXiv:0810.1995 (2009).  

\bibitem{BEHWZ} F.~Bourgeois, Y.~Eliashberg, H.~Hofer, K.~Wysocki,
  E.~Zehnder, {\em Compactness results in symplectic field theory},
  Geom. Topol. {\bf 7}  (2003), 799--888.

\bibitem{BM}
F.~Bourgeois, K.~Mohnke, {\em Coherent orientations in symplectic field theory},  Math. Z.
{\bf 248}  (2004),  no. 1, 123--146.

\bibitem{BH} D.~Burns and R.~Hind, {\em Sympelctic geometry and the
    uniqueness of Grauert tubes}, Geom.~Funct.~Anal.~{\bf 11}, no.~1,
    1--10 (2001).

\bibitem{CELN} K.~Cieliebak, T.~Ekholm, J.~Latschev and L.~Ng, {\em in
  preparation}

\bibitem{CM-comp} K.~Cieliebak and K.~Mohnke, {\em Compactness for
punctured holomorphic curves}, J.~Symp.~Geom.~{\bf 3}, no.~4, 1--65
(2006).

\bibitem{EES}
T.~Ekholm, J.~Etnyre, M.~Sullivan,
{\em The contact homology of Legendrian submanifolds in ${\mathbb
    R}\sp {2n+1}$}  J. Differential Geom.  {\bf 71}  (2005),  no. 2,
177--305


\bibitem{EGH} Y.~Eliashberg, A.~Givental and H.~Hofer, {\em Introduction to
    symplectic field theory}, GAFA 2000 Visions in Mathematics special volume,
    part II, 560--673.

\bibitem{FP} C.~Faber and R.~Pandharipande, {\em Hodge integrals and
    Gromov-Witten theory}, Invent.~Math.~{\bf 139}, no.~1, 173--199
    (2000).

\bibitem{Hof} H.~Hofer, {\em Pseudoholomorphic curves
in symplectizations with applications to the Weinstein conjecture
in dimension three}, Invent.~Math.~{\bf 114}, no.~3, 515--563 (1993).

\bibitem{HWZ-asymptotics} H.~Hofer, K.~Wysocki and E.~Zehnder, {\em
    Properties of pseudoholomorphic curves in symplectisations I:
    Asymptotics}, A..~Inst.~Henri Poincar\'e {\bf 13}, no.~3, 337--379
    (1996).

\bibitem{HWZ-smallarea} H.~Hofer, K.~Wysocki and E.~Zehnder, {\em
  Finite energy cylinders of small area}, Ergodic Theory
  Dynam. Systems {\bf 22}, no.~5, 1451--1486 (2002)

\bibitem{Liu} C.-C.~M.~Liu, {\em Moduli of J-Holomorphic Curves with
  Lagrangian Boundary Conditions and Open Gromov-Witten Invariants for
  an $S^1$-Equivariant Pair}, arXiv:math/0210257, v2 (2004)

\bibitem{MS} D.~McDuff and D.~Salamon, {\em $J$-holomorphic Curves
and Symplectic Topology}, AMS Colloquium Publications, Vol.~52,
Providence (2004).

\bibitem{Ng:08} L.~Ng, {\em Framed knot contact homology},
Duke Math. J. 141 (2008), no. 2, 365--406.

\bibitem{Sw}
M.~Schwarz, {\em Cohomology operations from $S^{1}$-cobordisms in Floer homology}
PhD thesis, ETH Z{\"u}rich (1995)

\bibitem{Sik} J.~C.~Sikorav, {\em Some properties of holomorphic
    curves in almost complex manifolds}, in: Holomorphic curves in
    symplectic geometry, ed.~Audin and Lafontaine, 165--189,
    Progr.~Math.~117, Birkh\"auser (1994).

\end{thebibliography}
\end{document}